\numberwithin{equation}{section}
\theoremstyle{plain}
\newtheorem{theorem}{Theorem}[section]
\newtheorem{lemma}[theorem]{Lemma}
\newtheorem{corollary}[theorem]{Corollary}
\theoremstyle{definition}
\theoremstyle{remark}
\newtheorem{case[theorem]}{Case}
\title[\parbox{14cm}{\centering{Pinned distance sets, $k$-simplices, Wolff's exponent in finite fields and sum-product estimates \hspace{1in}}} \quad]{Pinned distance sets, $k$-simplices, Wolff's exponent in finite fields and sum-product estimates}
\author{Jeremy Chapman, M. Burak Erdo\u{g}an, Derrick Hart, Alex Iosevich and Doowon Koh}
\begin{document}
\maketitle

\begin{abstract} An analog of the Falconer distance problem in vector spaces over finite fields asks for the threshold $\alpha>0$ such that $|\Delta(E)| \gtrsim q$ whenever $|E| \gtrsim q^{\alpha}$, where $E \subset {\Bbb F}_q^d$, the $d$-dimensional vector space over a finite field with $q$ elements (not necessarily prime). Here $\Delta(E)=\{{(x_1-y_1)}^2+\dots+{(x_d-y_d)}^2: x,y \in E\}$. The fourth listed author and Misha Rudnev (\cite{IR07}) established the threshold $\frac{d+1}{2}$, and in \cite{HIKR07} Misha Rudnev and the third, fourth and fifth authors of this paper proved that this exponent is sharp in odd dimensions.  In two dimensions we improve the exponent to $\tfrac{4}{3}$, consistent with the corresponding exponent in Euclidean space obtained by Wolff (\cite{W99}). 

The pinned distance set $\Delta_y(E)=\{{(x_1-y_1)}^2+\dots+{(x_d-y_d)}^2: x\in E\}$ for a pin $y\in E$ has been studied in the Euclidean setting.  Peres and Schlag (\cite{PS00}) showed that if the Hausdorff dimension of a set $E$ is greater than $\tfrac{d+1}{2}$ then the Lebesgue measure of $\Delta_y(E)$ is positive for almost every pin $y$.  In this paper we obtain the analogous result in the finite field setting.  In addition, the same result is shown to be true for the pinned dot product set $\Pi_y(E)=\{x\cdot y: x\in E\}$. 
Under the additional assumption that the set $E$ has cartesian product structure we improve the pinned threshold for both distances and dot products to $\frac{d^2}{2d-1}$.   

The pinned dot product result for cartesian products implies the following sum-product result.
Let $A\subset \mathbb F_q$ and $z\in \mathbb F^*_q$.  If $|A|\geq q^{\frac{d}{2d-1}}$ then there exists a subset $E'\subset A\times \dots \times A=A^{d-1}$ with $|E'|\gtrsim |A|^{d-1}$ such that  for any $(a_1,\dots, a_{d-1}) \in E'$,
$$|a_1A+a_2A+\dots +a_{d-1}A+zA| > \frac{q}{2} $$ where $a_j A=\{a_ja:a \in A\},j=1,\dots,d-1$.

A generalization of the Falconer distance problem is determine the minimal  $\alpha>0$ such that $E$ contains a congruent copy of every $k$ dimensional simplex whenever $|E| \gtrsim q^{\alpha}$. Here the authors improve on known results (for $k>3$) using Fourier analytic methods, showing that $\alpha$ may be taken to be $\frac{d+k}{2}$.
\end{abstract}         
\vspace{-.4in}
\tableofcontents
\setstretch{1.25}

\section{Introduction} 

The classical Erd\H os distance problem asks for the minimal number of distinct distances determined by a finite point set in ${\Bbb R}^d$, $d \ge 2$. The continuous analog of this problem, called the Falconer distance problem asks for the optimal threshold such that the set of distances determined by a subset of ${\Bbb R}^d$, $d \ge 2$, of larger dimension has positive Lebesgue measure. It is conjectured that a set of $N$ points in ${\Bbb R}^d$, $d \ge 2$, determines $ \gtrapprox N^{\frac{2}{d}}$ distances and, similarly, that a subset of ${\Bbb R}^d$, $d \ge 2$, of Hausdorff dimension greater than $\tfrac{d}{2}$ determines a set of distances of positive Lebesgue measure. Here, and throughout, $X \lessapprox Y$ means that for every $\epsilon>0$ there exists $C_{\epsilon}>0$ such that $X \leq C_{\epsilon}N^{\epsilon}Y$. Similarly, $X \lesssim Y$ means that there exists $C>0$ such that $X \leq CY$. 

Neither problem is close to being completely solved. See \cite{KT04} and \cite{SV05}, and the references contained therein, on the latest developments on the Erd\H os distance problem. See \cite{Erd05} and the references contained therein for the best known exponents for the Falconer distance problem. 

In vector spaces over finite fields, one may define for $E \subset {\Bbb F}_q^d$, 
$$ \Delta(E)=\{||x-y||: x,y \in E\},$$ where 
$$ \|x-y\|={(x_1-y_1)}^2+\dots+{(x_d-y_d)}^2,$$ and one may again ask for the smallest possible size of $\Delta(E)$ in terms of the size of $E$. While $||\cdot||$ is not a distance, in the sense of metric spaces, it is still a rigid invariant in the sense that if $||x-y||=||x'-y'||$, there exists $\tau \in {\Bbb F}_q^d$ and $O \in SO_d({\Bbb F}_q)$, the group of special orthogonal matrices, such that $x'=Ox+\tau$ and $y'=Oy+\tau$. 

There are several issues to contend with here. First, $E$ may be the whole vector space, which would result in the rather small size for the distance set: 
$$ |\Delta(E)|={|E|}^{\frac{1}{d}}.$$ 

Another compelling consideration is that if $q$ is a prime congruent to $1~ (mod~4)$, then there exists $i \in {\Bbb F}_q$ such that $i^2=-1$. This allows us to construct a set in ${\mathbb F}_q^2,$
$$ Z=\{(t,it): t \in {\Bbb F}_q\}$$ and one can readily check that 
$$ \Delta(Z)=\{0\}.$$  

The first  non-trivial result on the Erd\H os-Falconer distance problem in vector spaces over finite fields is proved by Bourgain, Katz and Tao in \cite{BKT04}. The authors get around the first mentioned obstruction by assuming that $|E| \lesssim q^{2-\epsilon}$ for some $\epsilon>0$. They get around the second mentioned obstruction by mandating that $q$ is a prime $\equiv 3~ (mod~4)$. As a result they prove that 
$$ |\Delta(E)| \gtrsim {|E|}^{\frac{1}{2}+\delta},$$ where $\delta$ is a function of $\epsilon$. 

In \cite{IR07} the fourth author along with M. Rudnev went after a distance set result for general fields in arbitrary dimension with explicit exponents. In order to deal with the obstructions outlined above, they reformulated the question in analogy with the Falconer distance problem: how large does $E \subset {\Bbb F}_q^d$, $d \ge 2$, need to be to ensure that $\Delta(E)$ contains a positive proportion of the elements of ${\Bbb F}_q$. They proved that if $|E| \ge 2q^{\frac{d+1}{2}}$, then $\Delta(E)={\Bbb F}_q$ directly in line with Falconer's result (\cite{Fa86}) in Euclidean setting  that for a set $E$ with Hausdorff dimension greater than $\frac{d+1}{2}$ the distance set is of positive measure.   At first, it seemed reasonable that the exponent $\tfrac{d+1}{2}$ may be improvable, in line with the Falconer distance conjecture described above. However, the third,  fourth, and fifth authors of this paper along with M. Rudnev discovered in \cite{HIKR07} that the arithmetic of
  the problem makes the exponent $\tfrac{d+1}{2}$ best possible in {\bf odd dimensions}, at least in general fields. In even dimensions it is still possible that the correct exponent is $\tfrac{d}{2}$, in analogy with the Euclidean case.  In this paper the authors take a first step in this direction by showing that if $|E| \subset {\Bbb F}_q^2$ satisfies $|E| \ge  q^{\frac{4}{3}}$ then $|\Delta(E)| \ge cq$. This is in line with Wolff's result for the Falconer conjecture in the plane which says that the Lebesgue measure of the set of distances determined by a subset of the plane of Hausdorff dimension greater than $\frac{4}{3}$ is positive. 

In \cite{PS00} Peres and Schlag studied the ``pinned" distance sets $\Delta_y(E)=\{\|x-y\|: x \in E\}$ for a ``pin" $y \in E$.  They showed that Falconer's result (\cite{Fa86}) could be sharpened to show that if the Hausdorff dimension of a set $E$ is greater than $\tfrac{d+1}{2}$ then the Lebesgue measure of $\Delta_y(E)$ is positive for almost every pin $y \in E$.  In this paper the authors obtain the analogous result in the finite field setting. In addition, the authors show that this result holds for the pinned dot product sets  $\Pi_y(E)=\{x \cdot y: x \in E\}$

The example which shows that the $\frac{d+1}{2}$ is sharp in odd dimensions is very radial in nature and this led the authors of this paper to consider classes of sets that possess a certain amount of product structure.  In $d$ dimensions we show that for a positive proportion of pins one may obtain a positive proportion of pinned distances for product sets, with the exponent ${\frac{d^2}{2d-1}}$ in place of ${\frac{d+1}{2}}$, improving an analog the exponent due to the second listed author  (\cite{Erd05}) in Euclidean space which holds for all sets.   In the case of pinned dot product sets of subsets with product structure the same result is shown to hold.  This result gives as a direct corollary a result which lies squarely inside a class of problems known as the sum-product problems.  These problems deal with showing in the context of a ring that in a variety of senses multiplicative structure is incompatible with additive structure.

A classical result due to Furstenberg, Katznelson and Weiss (\cite{FKW90}) states that if $E\subset \mathbb R^2$ 
positive upper Lebesgue density, then for any $\delta> 0$, the $\delta$-neighborhood of $E$ contains a 
congruent copy of a sufficiently large dilate of every three-point configuration. 
For arbitrary three-point configurations it is not possible to replace the thickened set $E_{\delta}$ by $E$.  This is due to Bourgain (\cite{B86}) who 
gave an example of a degenerate triangle where all three vertices are on the same line whose large dilates could not be placed in $E$. 
 In the case of $k$-simplex, that is the $k+1$ points spanning a $k$-dimensional subspace, Bourgain (\cite{B86}) applied Fourier analytic techniques to prove that a set $E$ of positive upper Lebesgue density will always contain a sufficiently large dilate of every non-degenerate $k$-point configuration where $k<d$.  If $k\geq d$, it is not currently known whether the $\delta$-neighborhood assumption is necessary.
 
In the case of the integer lattice $\mathbb Z^d$ this problem has been explored as well. Using  Fourier analytic methods \'Akos Magyar proved (\cite{M06}, \cite{M07}) that a set of positive density will contain an congruent copy of every large dilate of a non-degenerate $k$-simplex where $d >2k+4$.

In combinatorics and geometric measure theory the study of $k$-simplices up to congruence may be rephrased in terms of distances. By elementary linear algebra, asking whether a particular translated and rotated copy of a $k$-simplex occurs in a set $E$ is equivalent to asking whether the set of ${k+1 \choose 2}$ distances determined by that $k+1$-point configuration is also determined by some $k+1$ point subset of $E$. In the case of a $1$-simplex this is equivalent to the already discussed Erd\H os and Falconer distance problems.

In the case of vector spaces over finite fields one may then phrase the following generalization of the Erd\H os-Falconer distance problem. How large does $E$ need to be to ensure that $E$ contains a congruent copy of every or at least a positive proportion of all $k$-simplices?
Observe that dilations are not used because the lack of order in in a finite field makes the notion of a sufficiently large dilation meaningless.  

The first investigation into this was done by the third and forth listed authors in \cite{HI07} (see also \cite{HIKSU08}).  It was shown that if a subset
$E$ of $\mathbb F_q^d, d>{k+1 \choose 2}$ is of such that $|E|\gtrsim q^{\frac{k}{k+1} d+\frac{k}{2}}$ then $E$ contains a congruent copy of every $k$-simplices (as long as one is willing to ignore simplices with zero distances). 
This was improved using graph theoretic methods by L. A. Vinh (\cite{V08I}) who obtained the same conclusion for $E$ such that $|E|\gtrsim q^{\frac{d-1}{2}+k}, d\geq 2k$. When the number of points is very close to $d$ these results are trivial.  In the case of triangles in $\mathbb F_q^2$ the third and forth listed authors along with D. Covert and I. Uriarte-Tuero (\cite{CHIU08}) showed that if E has density greater than $\rho$ for some $C q^{-1/2}\leq \rho \leq 1$ with a sufficiently large constant $C > 0$, then the set of triangles determined by $E$, up to congruence, has density greater than $c \rho$.  L. A. Vinh (\cite{V08II}) has shown that for $|E| \gtrsim q^{\frac{d+2}{2}}$ then the set of triangles, up to congruence, has density greater than $c$.  

In this paper the authors show that  for 
$|E|\gtrsim q^{\frac{d+k}{2}}, d\geq k$ then the set of $k$-simplices, up to congruence, has density greater than $c$. We obtain a stronger result in the following situation. Suppose that $E$ is a subset of the $d$-dimensional sphere $S$ where $S=\{x\in\mathbb F_q^d:\|x\|=1\}$.  We show that if $|E|\gtrsim q^{\frac{d+k-1}{2}}$ then $E$ contains a congruent copy of a positive proportion of all $k$-simplices.

The only meaningful sharpness example we have at this point is the Cartesian product of sub-spaces. If $q=p^2$, then there exists a subset of ${\Bbb F}_q^d$ of size exactly $q^{\frac{d}{2}}$ such that all the distances among the vertices of a $k$-simplex are elements of ${\Bbb F}_p$ and thus a positive proportion of $k$-simplexes cannot possibly be realized. On the other hand, in ${\Bbb R}^d$, a conjecture due to Erd\H os and Purdy (see \cite{AS02} and \cite{AAPS07} and the references contained therein) says that an $n$ point set contains fewer than $O(n^{\frac{d}{2}})$ copies of a a $k$-simplex. The classical Lenz construction shows that this estimate would be best possible. It follows that a $n$-point set determines at least $Cn^{k+1-\frac{d}{2}}$ non-congruent $k$-simplexes. The most ambitious conjecture one might be tempted to formulate based on these observations in ${\Bbb F}_q^d$ is that $E \subset {\Bbb F}_q^d$ determines a positive proportion of all the $k$-simplexes, up to congruence, if 
$$ |E| \gtrsim \max \left\{q^{\frac{d}{2}}, q^{\frac{k+1}{k+1-\frac{d}{2}}} \right\}.$$ 

Unfortunately, as we pointed out above, this already fails in the case $k=1$ where the exponent $\frac{d+1}{2}$ is best possible in odd dimensions. We conjecture that in odd dimensions, the exponent $\frac{d+k}{2}$, obtained in this paper, is sharp. In even dimensions, we believe the exponent $\frac{d+k-1}{2}$ to be best possible. 

\section{Statement of Results}

\subsection{Wolff's exponent in finite fields}

Define 
$$\operatorname M_E(q)=\frac{q^{3d+1}}{{|E|}^4} \sum_{t \in {\Bbb F}_q^{*}} \sigma^2_E(t),$$ where 
$$ \sigma_E(t)=\sum_{||m||=t} {|\widehat{E}(m)|}^2.$$ 

In \cite{IR07} the following result is given that gives us a lower bound on the size of the distance set in terms of the upper bound on $\operatorname M_E(q)$. 
\begin{theorem} \label{mattila} Let $E \subset {\mathbb F}_q^d$, $d \ge 2$. Suppose that $|E| \ge C q^{\frac{d}{2}}$ with $C$ sufficiently large. Then
$$ |\Delta(E)| \ge c \min \left\{q, \frac{q}{\operatorname M_E(q)} \right\}.$$
\end{theorem}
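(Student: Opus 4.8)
The plan is a Fourier-analytic second-moment estimate for the distance-counting function, i.e. the finite-field incarnation of Mattila's integral bound. For $t\in\mathbb F_q$ put
$$\nu(t)=\#\{(x,y)\in E\times E:\ \|x-y\|=t\},$$
so $\sum_{t}\nu(t)=|E|^2$ and $\Delta(E)$ is exactly the support of $\nu$. By Cauchy--Schwarz,
$$|E|^4=\Big(\sum_{t\in\Delta(E)}\nu(t)\Big)^2\le|\Delta(E)|\cdot\sum_{t\in\mathbb F_q}\nu(t)^2,$$
so everything reduces to proving $\sum_t\nu(t)^2\lesssim q^{-1}|E|^4\bigl(1+\operatorname M_E(q)\bigr)$; that bound yields $|\Delta(E)|\gtrsim q/\bigl(1+\operatorname M_E(q)\bigr)\ge\tfrac12\min\{q,\,q/\operatorname M_E(q)\}$.

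To compute the second moment, write $\nu(t)=\sum_{x,y\in E}S_t(x-y)$, where $S_t$ is the indicator function of the sphere $\{z:\|z\|=t\}$, and expand $S_t(z)=q^{-1}\sum_{s\in\mathbb F_q}\chi\bigl(s(\|z\|-t)\bigr)$ for a fixed nontrivial additive character $\chi$. The term $s=0$ produces the main term $|E|^2/q$. For $s\ne0$ one evaluates $\sum_{z}\chi(s\|z\|-z\cdot m)$ by completing the square in each coordinate, obtaining a product of Gauss sums of modulus $q^{d/2}$ times the explicit phase $\chi(-\|m\|/4s)$; hence, with $\widehat E$ normalized so that $\sum_m|\widehat E(m)|^2=q^{-d}|E|$,
$$\nu(t)=\frac{|E|^2}{q}+q^{d-1}\mathfrak g^{d}\sum_{s\ne0}\eta(s)^d\,\chi(-st)\sum_{m}\chi\Bigl(-\frac{\|m\|}{4s}\Bigr)|\widehat E(m)|^2,$$
where $\mathfrak g$ is the quadratic Gauss sum and $\eta$ the quadratic character. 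Squaring and summing in $t$, the orthogonality relation $\sum_t\chi(-(s+s')t)=q\,\mathbf 1_{s'=-s}$ kills the cross term and collapses the $s,s'$ sum to $s'=-s$; then $\mathfrak g^{2d}\eta(-1)^d=q^d$ and $\sum_{s\ne0}\chi(a/s)=q\,\mathbf 1_{a=0}-1$ give the exact identity
$$\sum_{t\in\mathbb F_q}\nu(t)^2=\frac{|E|^4}{q}+q^{3d}\sum_{t\in\mathbb F_q}\sigma_E(t)^2-q^{d-1}|E|^2.$$

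By definition of $\operatorname M_E(q)$ one has $q^{3d}\sum_{t\in\mathbb F_q^*}\sigma_E(t)^2=q^{-1}|E|^4\operatorname M_E(q)$, and $-q^{d-1}|E|^2\le0$ is harmless, so the entire estimate hinges on the isotropic term $q^{3d}\sigma_E(0)^2=q^{3d}\bigl(\sum_{\|m\|=0}|\widehat E(m)|^2\bigr)^2$. This is the one genuinely delicate point, and it is precisely where the degenerate configurations live (sets concentrated on a maximal totally isotropic subspace — e.g. dilates of the set $Z$ from the introduction — saturate it), which is also why the hypothesis that $C$ be sufficiently large is indispensable. The plan to dispatch it is twofold: crudely $\sigma_E(0)\le\sum_m|\widehat E(m)|^2=q^{-d}|E|$, which gives $q^{3d}\sigma_E(0)^2\le q^{d}|E|^2\le q^{-1}|E|^4$ already when $|E|\ge q^{(d+1)/2}$; and in the residual range $C q^{d/2}\le|E|<q^{(d+1)/2}$ one shows $q^{3d}\sigma_E(0)^2\lesssim q^{-1}|E|^4\bigl(1+\operatorname M_E(q)\bigr)$ by analyzing the Fourier mass on the null cone $\{\|m\|=0\}$ (near-saturation of $\sigma_E(0)\le q^{-d}|E|$ would force $\widehat E$ to be essentially supported on that cone and hence $E$ to be essentially an affine subspace, making $|E|$ essentially a power of $q$, which is incompatible with that range once $C$ is large). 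In even dimensions there is a clean shortcut avoiding any rigidity input: apply Cauchy--Schwarz only over $t\ne0$ (note $0\in\Delta(E)$ trivially) and use $\sum_{t\ne0}\nu(t)^2=\sum_t\nu(t)^2-\nu(0)^2$; writing $\nu(0)=|E|^2/q+R(0)$ one checks, using $\mathfrak g^{2d}=q^d$, that the $q^{3d}\sigma_E(0)^2$ contributions of $\sum_t\nu(t)^2$ and of $\nu(0)^2$ cancel, while $|R(0)|\lesssim q^{d/2}|E|\le C^{-1}|E|^2$ keeps $\sum_{t\ne0}\nu(t)=|E|^2-\nu(0)\ge\tfrac12|E|^2$, so the numerator stays $\gtrsim|E|^4$.

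Combining these ingredients gives $\sum_t\nu(t)^2\lesssim q^{-1}|E|^4\bigl(1+\operatorname M_E(q)\bigr)$ and hence the claimed bound. To summarize, the only real obstacle is the isotropic-frequency term $\sigma_E(0)$: the Gauss-sum and orthogonality manipulations are routine bookkeeping, but the null cone $\{\|m\|=0\}$ is exactly where the arithmetic of the form $\|\cdot\|$ interferes, and the quantitative role of the largeness of $C$ is concentrated entirely there.
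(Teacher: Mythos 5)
Your overall strategy is the right one and is exactly the mechanism behind this theorem (which the paper itself only quotes from \cite{IR07}): the Cauchy--Schwarz reduction $|\Delta(E)|\ge |E|^4/\sum_t\nu(t)^2$ together with the exact second-moment identity $\sum_t\nu(t)^2=q^{-1}|E|^4+q^{3d}\sum_t\sigma_E^2(t)-q^{d-1}|E|^2$, which for $d=2$ is precisely the paper's Lemma \ref{main}. You are also right that the whole difficulty sits in the isotropic term $q^{3d}\sigma_E(0)^2$, and your even-dimensional shortcut --- apply Cauchy--Schwarz only over $t\ne 0$ so that the $q^{3d}\sigma_E(0)^2$ contributions of $\sum_t\nu(t)^2$ and $\nu(0)^2$ cancel --- is sound and is essentially what the paper does for $d=2$, $q\equiv 1 \pmod 4$ (compare (\ref{squarenumber})--(\ref{lowerbound})); the leftover cross terms are $O(q^{d/2-1}|E|^3+q^{d-1}|E|^2)=O(C^{-1}q^{-1}|E|^4)$, which is harmless.

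The genuine gap is in odd dimensions in the residual range $Cq^{d/2}\le |E|<q^{(d+1)/2}$. There your crude bound $\sigma_E(0)\le q^{-d}|E|$ loses a factor of up to $q/C^2$, the even-dimensional cancellation is unavailable (for $d$ odd the $\sigma_E(0)$-term in $\nu(0)$ carries the factor $\sum_{s\ne 0}\psi(s)=0$, so there is nothing in $\nu(0)^2$ to cancel against), and the rigidity heuristic you offer instead is not a proof: large Fourier mass on the null cone does not force $E$ to be close to an affine subspace, and ``$|E|$ essentially a power of $q$'' is in no way incompatible with the range $[Cq^{d/2},q^{(d+1)/2})$. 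What actually closes this case is a pointwise bound on the Fourier transform of the null cone: for $d$ odd and $m\ne 0$, the $s$-sum in (\ref{sphereFouriertransform}) at $t=0$ is a Sali\'e-type sum equal to $\psi(\|m\|/4)\,G_1(\psi,\chi)$ (and $0$ when $\|m\|=0$), so $|\widehat{S_0}(m)|\le q^{-(d+1)/2}$. Writing $\sigma_E(0)=q^{-d}\sum_{x,y\in E}\widehat{S_0}(x-y)$ and splitting into the diagonal and off-diagonal then gives
$$\sigma_E(0)\lesssim q^{-d-1}|E|+q^{-(3d+1)/2}|E|^2,\qquad q^{3d}\sigma_E(0)^2\lesssim q^{d-2}|E|^2+q^{-1}|E|^4\lesssim q^{-1}|E|^4$$
throughout $|E|\ge q^{d/2}$. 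Substituting this for your rigidity step (and keeping your shortcut for $d$ even, where $\widehat{S_0}$ has no such decay on the cone itself) completes the proof; as written, the odd-dimensional case below $q^{(d+1)/2}$ is not established.
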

In this paper the authors show that in the case of two dimensions one may give a slightly more explicit version of Theorem \ref{mattila}.  An upper bound  on $\operatorname M_E(q)$ of $\sqrt{3}|E|^{-\frac{3}{2}}q^2$ is obtained, which yields that if $E \subset {\Bbb F}_q^2$ with $|E| \geq q^{\frac{4}{3}}$, then 
$ |\Delta(E)| \ge cq.$ In more detail, we have the following result.
\begin{theorem} \label{wolffin2d}  Let $E\subset {\mathbb F}_q^2.$ If $q\equiv 3~(mod~4)$ and $|E|\geq q^{4/3}$, then
$$ |\Delta(E)|> \frac{q}{1+\sqrt{3}}.$$
On the other hand, given $q\equiv 1~(mod~4)$ sufficiently large and $|E|\geq q^{4/3}$, there exists $0< \varepsilon_{q}<1$ such that
$$ |\Delta(E)|> \varepsilon_{q} \cdot q,$$
where $ \varepsilon_{q} \to \frac{1}{1+\sqrt{3}}$ as $q \to \infty.$ In fact, we can choose a $\varepsilon_q$ as the following:
$$\varepsilon_q= \frac{\left(1-2q^{-1}\right)^2}{1+\sqrt{3}-\sqrt{3}q^{-2/3}}.$$
\end{theorem}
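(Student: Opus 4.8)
The goal is to prove Theorem \ref{wolffin2d}, which by Theorem \ref{mattila} reduces to establishing the bound $\operatorname M_E(q) \le \sqrt{3}\,|E|^{-3/2}q^2$ (with the appropriate tweak in the $q\equiv 1\ (\mathrm{mod}\ 4)$ case) and then tracking the constants through the conclusion of Theorem \ref{mattila}. So the entire fight is with
$$ \operatorname M_E(q) = \frac{q^{3d+1}}{|E|^4}\sum_{t\in\mathbb F_q^*}\sigma_E^2(t), \qquad \sigma_E(t)=\sum_{\|m\|=t}|\widehat E(m)|^2, $$
specialized to $d=2$. I would first expand $\sum_t \sigma_E^2(t)$ combinatorially: it counts quadruples $(m,m',n,n')$ of frequencies with $\|m\|=\|m'\|=\|n\|=\|n'\|\ne 0$ — wait, more precisely $\sum_{t\ne 0}\sigma_E(t)^2 = \sum_{m,m':\,\|m\|=\|m'\|\ne 0}|\widehat E(m)|^2|\widehat E(m')|^2$. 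Opening the Fourier transforms $\widehat E(m)=\sum_{x\in E}\chi(-x\cdot m)$ turns this into a weighted count of the number of solutions to a system involving pairs of points of $E$ whose difference vectors $x-y$ and $x'-y'$ lie on the same ``circle'' $\|z\|=t$.

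**Main steps.** (1) Reduce $\operatorname M_E(q)$ to counting incidences between the difference set of $E$ and the family of circles $\{z:\|z\|=t\}_{t\ne 0}$, i.e.\ essentially bounding $\sum_{t\ne 0} |\{(x,y)\in E\times E:\|x-y\|=t\}|^2$ after separating the $t=0$ contribution and the diagonal. (2) Apply the key two-dimensional input: a sharp bound on the number of incidences between points and circles (equivalently, a bound on the $L^4$ norm of the circular extension operator / the number of solutions to $\|u\|=\|v\|$ with $u,v$ ranging over a set), which over $\mathbb F_q$ reduces via the classical Kloosterman/Gauss-sum machinery to estimating the relevant character sums — this is where the constant $\sqrt 3$ is born (it comes from the Weil bound $|K(a,b;q)|\le 2\sqrt q$ type estimates, squared-and-summed, producing a $3$ under the radical). (3) Combine with the trivial bounds $\sum_m|\widehat E(m)|^2 = q^d|E|$ (Plancherel) and $|\widehat E(0)|^2=|E|^2$ to get $\sum_{t\ne 0}\sigma_E(t)^2 \le \sqrt 3\,|E|^{5/2}q^{d-3}$ or the like, which after multiplying by $q^{3d+1}/|E|^4$ yields $\operatorname M_E(q)\le \sqrt 3\,|E|^{-3/2}q^2$. (4) Feed this into Theorem \ref{mattila}: $|\Delta(E)|\ge c\min\{q, q/\operatorname M_E(q)\} \ge c\min\{q, |E|^{3/2}q^{-1}/\sqrt3\}$, so $|E|\ge q^{4/3}$ forces $|\Delta(E)|\ge cq$; then redo the constant-chasing inside the proof of Theorem \ref{mattila} carefully (it is a Cauchy–Schwarz argument on $\Delta(E)$) to extract the precise constant $1/(1+\sqrt3)$. (5) For $q\equiv 1\ (\mathrm{mod}\ 4)$, handle the isotropic-vector obstruction: the sphere $\|m\|=t$ is no longer ``nice'' (there are null directions), so the character sum estimate picks up lower-order correction terms of size $O(q^{-1})$ and $O(q^{-2/3})$, which is exactly what produces the stated $\varepsilon_q = (1-2q^{-1})^2/(1+\sqrt3-\sqrt3 q^{-2/3})$.

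**Key character-sum computation.** The technical heart is evaluating something like $\sum_{\|m\|=t}\chi(m\cdot w)$ for fixed $w$ and $t\ne 0$, uniformly in $w$. In two dimensions this is a Salié- or Kloosterman-type sum and the Weil bound gives $\le 2\sqrt q$ off the trivial cases; squaring, summing over the relevant parameters, and carefully isolating the main term is what yields a clean constant rather than an unspecified $C$. I would set up the computation so that $\sum_{t\ne 0}\sigma_E(t)^2$ is expressed as (main term) $+$ (error), bound the error by the Weil estimate, optimize, and only at the very end convert to a statement about $\operatorname M_E(q)$.

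**Main obstacle.** The hard part will not be the structure of the argument — that is dictated by Theorem \ref{mattila} — but pushing the circle–incidence / character-sum bound all the way down to an \emph{explicit} constant $\sqrt3$, uniformly in $q$, and especially handling the $q\equiv 1\ (\mathrm{mod}\ 4)$ case where isotropic lines in the plane (the set $Z=\{(t,it)\}$ from the introduction) make the ``circle'' $\|m\|=t$ degenerate and force the lower-order terms $q^{-1}$ and $q^{-2/3}$ into $\varepsilon_q$. Getting those correction exponents exactly right — rather than merely $o(1)$ — is the delicate bookkeeping step, and it is the reason the two congruence classes of $q$ are stated separately.
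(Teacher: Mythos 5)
Your high-level skeleton matches the paper's: a Cauchy--Schwarz/Mattila-type reduction $|\Delta(E)|\ge |E|^4/\sum_t\nu^2(t)$, an identity expressing $\sum_t\nu^2(t)$ in terms of $q^6\sum_t\bigl(\sum_{m\in S_t}|\widehat E(m)|^2\bigr)^2$ plus explicit lower-order terms, and a sharp circle estimate to control the main term. But the two places where the actual content lives are not supplied, and the mechanisms you propose for them are not the ones that produce the stated constants. First, the constant $\sqrt3$: you attribute it to the Weil bound for Kloosterman sums, ``squared-and-summed, producing a $3$ under the radical.'' In the paper the relevant estimate is $\max_{t\ne0}\sum_{m\in S_t}|\widehat E(m)|^2\le \sqrt3\,|E|^{3/2}q^{-3}$, proved via the explicit operator norm of the $L^2\to L^4$ extension inequality for the circle; the $3$ there is $1+2$, where the $2$ is the purely geometric fact that $\max_{x\ne0}|\{(\alpha,\beta)\in S_t\times S_t:\alpha+\beta=x\}|\le2$ (two circles meet in at most two points) bounding $\|d\sigma\ast d\sigma\|_{L^\infty}$ away from the origin, and the $1$ is the separate contribution of $fd\sigma\ast fd\sigma(0,0)$. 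A Weil-bound route would naturally give a $4=2^2$, not a $3$, and you give no computation showing otherwise; since the whole point of the theorem is the explicit constant, this is a genuine gap rather than a bookkeeping detail. Note also that the paper bounds the maximum of $\sigma_E(t)$ over $t\ne0$ and then uses Plancherel on $\sum_{t}\sigma_E(t)$, rather than bounding $\sum_{t\ne0}\sigma_E(t)^2$ directly as an incidence count; either can work, but you would still need the sharp extension estimate.

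Second, the case $q\equiv1\ (\mathrm{mod}\ 4)$. You describe $\varepsilon_q$ as arising from ``lower-order correction terms of size $O(q^{-1})$ and $O(q^{-2/3})$'' in the character sum estimate. That is not where these factors come from. The difficulty is that $S_0$ is a union of two isotropic lines of total size $\approx 2q$, so the term $q^6\bigl(\sum_{m\in S_0}|\widehat E(m)|^2\bigr)^2$ cannot be absorbed; the paper handles this by subtracting the zero-distance count $\nu(0)$ from both sides of the Cauchy--Schwarz step, writing everything in terms of the unknown $\Omega(E)=\sum_{\|m\|=0}|\widehat E(m)|^2\in[q^{-4}|E|^2,\ q^{-2}|E|]$, and minimizing the resulting rational function $f(x)=(a-q^3x)^2/(bx+c)$ over that interval (the minimum occurs at the left endpoint for $q$ large). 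Only after that optimization do the factors appear: $(1-2q^{-1})^2$ comes from $g(E)=|E|^2-2q^{-1}|E|^2+|E|\ge(1-2q^{-1})|E|^2$, and $q^{-2/3}$ is simply $|E|^{-1/2}$ evaluated at the threshold $|E|=q^{4/3}$. Without this cancellation-and-optimization step your plan has no way to control the $S_0$ contribution, so the second half of the theorem does not follow from what you have outlined.
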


\subsection{Pinned distances and dot products} 
Given $y \in {\Bbb F}_q^d$, define the pinned distance set by  
$$\Delta_y(E)=\{||x-y||: x \in E\}.$$ We have the following result.
\begin{theorem} \label{distpinned} Let $E \subset {\Bbb F}_q^d , d\geq 2$. Suppose that 
$ |E|\ge q^{\frac{d+1}{2}}$. Then there exists a subset $E'$ of $E$ with $|E'|\gtrsim |E|$ such that for every $y \in E'$ one has that 
$$ |\Delta_y(E)|>\frac{q}{2}.$$
\end{theorem}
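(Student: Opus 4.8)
The plan is to carry out the finite-field analogue of the Peres--Schlag argument. For a pin $y\in E$ and $t\in\mathbb F_q$ put $\nu_y(t)=|\{x\in E:\|x-y\|=t\}|=|E\cap(y+S_t)|$, where $S_t=\{v\in\mathbb F_q^d:\|v\|=t\}$. Then $\sum_t\nu_y(t)=|E|$, and $|\Delta_y(E)|=\#\{t:\nu_y(t)>0\}=q-\#\{t:\nu_y(t)=0\}$; moreover $0\in\Delta_y(E)$ automatically since $y\in E$. So it suffices to produce $E'\subseteq E$ with $|E'|\gtrsim|E|$ such that every $y\in E'$ has $\#\{t:\nu_y(t)=0\}<q/2$.

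First I would Fourier-expand and peel off the zero frequency. Writing $\widehat f(m)=\sum_x f(x)\chi(-x\cdot m)$, one gets $\nu_y(t)=q^{-d}|S_t|\,|E|+\mathcal R_y(t)$, where $\mathcal R_y(t)=q^{-d}\sum_{m\ne0}\widehat{S_t}(m)\,\widehat E(-m)\,\chi(-y\cdot m)$. Since a level set $S_t$ of the quadratic form $\|\cdot\|$ has size $q^{d-1}\bigl(1+O(q^{-(d-1)/2})\bigr)$ for $t\ne0$, the ``main term'' $q^{-d}|S_t|\,|E|$ is $\ge\tfrac{|E|}{2q}$ (for $q$ large) for every $t\ne0$. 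Hence $\nu_y(t)=0$ with $t\ne0$ forces $|\mathcal R_y(t)|\ge\tfrac{|E|}{2q}$, so by Chebyshev
\[
\#\{t:\nu_y(t)=0\}\ \le\ \frac{4q^{2}}{|E|^{2}}\sum_{t\in\mathbb F_q}\mathcal R_y(t)^{2}.
\]
Thus the theorem follows once we know that $\sum_t\mathcal R_y(t)^2<\tfrac{|E|^2}{8q}$ for a positive proportion of $y\in E$.

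The key point is an $L^2$ bound for $\mathcal R_y$ averaged over \emph{all} $y\in\mathbb F_q^d$ (not merely $y\in E$) --- it is this enlargement of the range of $y$ that keeps the estimate from being a tautology. Expanding the square, using $\sum_{y\in\mathbb F_q^d}\chi(-y\cdot(m-m'))=q^{d}\mathbf 1_{m=m'}$ to collapse to the diagonal, then invoking the exact identity $\sum_{t\in\mathbb F_q}|\widehat{S_t}(m)|^{2}=q^{d-1}(q-1)$ for $m\ne0$ --- which is just the count of equidistant pairs dualized, requiring no pointwise Gauss-sum bound --- and finally Plancherel, one obtains
\[
\sum_{y\in E}\sum_{t\in\mathbb F_q}\mathcal R_y(t)^{2}\ \le\ \sum_{y\in\mathbb F_q^d}\sum_{t\in\mathbb F_q}\mathcal R_y(t)^{2}\ =\ \frac{q-1}{q}\sum_{m\ne0}|\widehat E(m)|^{2}\ <\ q^{d}|E|.
\]
By Markov's inequality the set $B=\{y\in E:\sum_t\mathcal R_y(t)^2\ge\tfrac{|E|^2}{8q}\}$ satisfies $|B|\le 8q^{d+1}/|E|$. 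This is where the exponent $\tfrac{d+1}{2}$ enters: once $|E|\gtrsim q^{(d+1)/2}$ we get $|B|\le\tfrac12|E|$, so $E'=E\setminus B$ has $|E'|\gtrsim|E|$, and by the previous step every $y\in E'$ satisfies $|\Delta_y(E)|>q/2$.

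The step I expect to require the most care is this last one: the bad set has size $\lesssim q^{d+1}/|E|$, which is comparable to $|E|$ precisely at the threshold $|E|\approx q^{(d+1)/2}$, so obtaining a genuine positive proportion of good pins forces one to use the full strength of the exact $L^2$ identity (wasting nothing in any pointwise estimate) --- or, what amounts to the same thing, to allow the implicit constant in the hypothesis $|E|\ge q^{(d+1)/2}$. The remaining ingredients (Fourier inversion, the size of a sphere, the identity for $\sum_t|\widehat{S_t}(m)|^2$, and separating out $t=0$) are routine.
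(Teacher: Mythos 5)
Your argument is correct, and its engine is the same as the paper's: a second-moment bound on the counting function $\nu_y(t)=|\{x\in E:\|x-y\|=t\}|$ in which the sum over pins is enlarged from $y\in E$ to all $y\in{\Bbb F}_q^d$ so that orthogonality collapses the off-diagonal terms, producing a total of size $q^{-1}|E|^3+O(q^d|E|)$ (equivalently, your $\sum_{y}\sum_t\mathcal R_y(t)^2< q^d|E|$). The packaging is genuinely different, though. The paper applies Cauchy--Schwarz directly to $\sum_t\nu_y(t)=|E|$ against $\sum_{y\in E}\sum_t\nu_y^2(t)$, getting a lower bound on the \emph{average} of $|\Delta_y(E)|$ and then pigeonholing; it never needs the size of a sphere nor its Fourier transform, because it rewrites $\|x-y\|=\|x'-y\|$ via orthogonality in an auxiliary variable $s$ and kills the $y$-sum at that stage. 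You instead subtract the expected value $q^{-d}|S_t||E|$ and run Chebyshev on the number of empty radii per pin, which buys pointwise (per-$y$) control on the number of missing distances at the cost of two extra (standard) inputs: $|S_t|=q^{d-1}+O(q^{(d-1)/2})$ for $t\neq 0$, and the exact identity $\sum_t|\widehat{S_t}(m)|^2$, both of which the paper records in Lemma \ref{sphere} but does not use for this theorem. Both routes degenerate identically at the exact threshold: at $|E|=q^{(d+1)/2}$ the paper's average is only $\geq q/2$, so its pigeonhole yields a vacuous proportion, while your Markov step gives $|B|\le 8|E|$; in either case one needs $|E|\ge Cq^{(d+1)/2}$ for some constant, which you correctly flag and which is consistent with the implicit constant in the conclusion $|E'|\gtrsim|E|$.
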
 

In analogy with the pinned distance set define the pinned dot product set by  
$$\Pi_y(E)=\{x \cdot y: x \in E\}.$$ 
\begin{theorem} \label{dotpinned} Let $E \subset {\Bbb F}_q^d$. Suppose that 
$ |E|\ge q^{\frac{d+1}{2}}$. Then there exists a subset $E'$ of $E$ with $|E'|\gtrsim |E|$ such that for every $y \in E'$ one has that $|\Pi_y(E)| > \frac{q}{2}$.  
\end{theorem}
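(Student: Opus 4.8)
The plan is a second-moment argument applied to the fibers of the map $x\mapsto x\cdot y$, averaged over the pin $y\in E$. Fix a nontrivial additive character $\chi$ of $\mathbb F_q$, and for $y\in\mathbb F_q^d$, $t\in\mathbb F_q$ set $\nu_y(t)=|\{x\in E:x\cdot y=t\}|$, so that $|\Pi_y(E)|=|\{t:\nu_y(t)>0\}|$. By Cauchy--Schwarz,
$$|E|^2=\Bigl(\sum_{t\in\mathbb F_q}\nu_y(t)\Bigr)^2\le |\Pi_y(E)|\sum_{t\in\mathbb F_q}\nu_y(t)^2,$$
so it suffices to produce a subset $E'\subseteq E$ with $|E'|\gtrsim|E|$ on which $\sum_t\nu_y(t)^2<2|E|^2/q$; every such pin then satisfies $|\Pi_y(E)|>q/2$.

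First I would evaluate the averaged second moment by orthogonality. Since $\sum_t\nu_y(t)^2=|\{(x,x')\in E\times E:(x-x')\cdot y=0\}|$ and $\mathbf{1}(v\cdot y=0)=q^{-1}\sum_{s\in\mathbb F_q}\chi(sv\cdot y)$, one gets, with $\widehat E(m)=\sum_{x\in E}\chi(x\cdot m)$,
$$\sum_{t}\nu_y(t)^2=\frac{|E|^2}{q}+\frac1q\sum_{s\in\mathbb F_q^{*}}|\widehat E(sy)|^2.$$
Summing over $y\in E$ and writing $\Lambda$ for the total, the main term is $|E|^3/q$, so it remains to bound $\sum_{y\in E}\sum_{s\ne0}|\widehat E(sy)|^2$. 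The contribution of $y=0$, if $0\in E$, is $(q-1)|E|^2\le q^{d+1}|E|$. For $y\ne0$ the vectors $\{sy:s\in\mathbb F_q^{*}\}$ are exactly the nonzero points of the line $\ell_y$ through the origin, so a change of variables regroups the remaining sum as $\sum_{m\ne0}|E\cap(\ell_m\setminus\{0\})|\,|\widehat E(m)|^2$, where $\ell_m$ is the line through $0$ and $m$; bounding $|E\cap(\ell_m\setminus\{0\})|\le q-1$ and applying Plancherel, $\sum_m|\widehat E(m)|^2=q^d|E|$, controls this by $(q-1)q^d|E|$. Hence
$$\Lambda\le \frac{|E|^3}{q}+2q^d|E|,$$
and since $|E|\ge q^{(d+1)/2}$ gives $q^{d+1}\le|E|^2$, this is $\le \bigl(1+2q^{d+1}|E|^{-2}\bigr)|E|^3/q$.

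The good pins are then extracted by a Markov-type inequality. Every $y\in E$ has $\sum_t\nu_y(t)^2\ge|E|^2/q$, while every $y$ in the exceptional set $B=\{y\in E:\sum_t\nu_y(t)^2\ge 2|E|^2/q\}$ contributes an extra $|E|^2/q$ to $\Lambda$; hence $\Lambda\ge\frac{|E|^2}{q}(|E|+|B|)$, so
$$|B|\le \frac{q\Lambda}{|E|^2}-|E|\le \frac{2q^{d+1}}{|E|}.$$
Thus $|B|\lesssim q^{d+1}/|E|\le 2q^{(d+1)/2}$ under the hypothesis, and once $|E|\ge Cq^{(d+1)/2}$ with $C$ a suitable constant this forces $|B|\le\tfrac12|E|$; in either form $E':=E\setminus B$ is the required set, since for $y\in E'$ we have $\sum_t\nu_y(t)^2<2|E|^2/q$ and therefore $|\Pi_y(E)|\ge|E|^2\big/\sum_t\nu_y(t)^2>q/2$.

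The only step that requires genuine care is the estimate for $\sum_{y\in E}\sum_{s\ne0}|\widehat E(sy)|^2$: a priori $E$ could concentrate along a single line through the origin, making that line simultaneously heavy in $E$ and carrying a large share of the Fourier $\ell^2$-mass. The trivial inequality $|E\cap(\ell_m\setminus\{0\})|\le q-1$ is exactly what controls this, and it is this step that produces the exponent $\tfrac{d+1}{2}$, just as in \cite{IR07}. Note that, unlike for the distance set, no congruence condition on $q$ is required here, since the bilinear pairing $x\cdot y$ never degenerates the way $\|x-y\|$ can. A parallel argument — in which the quadratic terms in $y$ in $\|x-y\|-\|x'-y\|$ cancel, leaving a linear expression in $y$, and in which the Gauss-sum estimate for the Fourier transform of the sphere $\{m:\|m\|=t\}$ supplies the needed decay — gives Theorem~\ref{distpinned}.
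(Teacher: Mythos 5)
Your proof is correct and follows essentially the same route as the paper's: the second moment $\sum_t \nu_y(t)^2$ is expanded by orthogonality, the dilates $\{sy\}$ are regrouped along lines through the origin so that each Fourier coefficient $|\widehat{E}(m)|^2$ is weighted by at most $q-1$, and Plancherel yields the bound $|E|^3/q+O(q^d|E|)$, followed by Cauchy--Schwarz. Your explicit Markov-type extraction of $E'$ is in fact more careful than the paper, which only establishes the averaged bound ${|E|}^{-1}\sum_{y\in E}|\Pi_y(E)|>q/2$ and leaves the pigeonholing implicit; the factor-of-two loss from treating $y=0$ separately, and the consequent need for $|E|\ge Cq^{(d+1)/2}$ in the Markov step, are harmless at the level of the stated $|E'|\gtrsim |E|$ conclusion.
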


\subsection{Cartesian Products}\label{cartprod}

Let $\pi(x)=(x_1, \dots, x_{d-1})$ and define 
$$ E_z=\pi(E) \times \{z\},$$ where $z$ is an element of $\mathbb F_q$ and $x\in {\mathbb F}_q^d.$ Here we could have chosen to place $z$ in any coordinate and have chosen to put $z$ in the $d$th coordinate only for simplicity of notation.

Given $y\in \pi(E) \times {\mathbb F}_q$ and $z \in {\mathbb F}_q,$ define 
$$ \Delta^{(z)}_y(E)=\{||x-\tilde{y}||:x \in E \},$$
where $ \tilde{y}= (\pi(y),z)\in E_z.$ 

We have the following result. 
\begin{theorem} \label{distpinnedproj} Let $E \subset {\Bbb F}_q^d$ and let $E_z $ be defined with respect to the projection $\pi$, 
and an element $z \in \mathbb F_q$ as above. Suppose that 
$$ |E||E_z| \ge q^d.$$ 
Then there exists a $E'_z \subset E_z$ with $|E'_z|\gtrsim |E_z|$ such that for every $ (\pi (y), z )\in E'_z , $

\begin{equation*}  |\Delta^{(z)}_y(E)| > \frac{q}{3}.\end{equation*}
\end{theorem}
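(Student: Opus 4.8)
The plan is to run a second-moment argument on the pinned counting function, exploiting the fact that $E_z$ sits inside a hyperplane to beat the naive threshold $q^{d+1}$ down to $q^{d}$. For a pin $a=\tilde y=(\pi(y),z)\in E_z$ set $\nu_a(t)=\#\{x\in E:\|x-a\|=t\}$, so $\sum_t\nu_a(t)=|E|$ and $|\Delta^{(z)}_y(E)|=\#\{t:\nu_a(t)>0\}$. Cauchy--Schwarz gives $|E|^2\le|\Delta^{(z)}_y(E)|\cdot\sum_t\nu_a(t)^2$, so, putting $E'_z=\{a\in E_z:\sum_t\nu_a(t)^2<3|E|^2/q\}$, we get $|\Delta^{(z)}_y(E)|>q/3$ on $E'_z$, and it remains to prove $|E'_z|\gtrsim|E_z|$. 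By Chebyshev's inequality this follows once one shows
$$ \sum_{a\in E_z}\sum_t\nu_a(t)^2\;\le\;C\,\frac{|E|^2\,|E_z|}{q}$$
with a constant $C$ strictly below $3$; this is where the hypothesis $|E||E_z|\ge q^d$ is consumed.

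For the second moment, expand the indicator of $\|x-a\|=t$ in additive characters: with $\chi$ a fixed nontrivial additive character,
$$ \sum_t\nu_a(t)^2\;=\;\frac1q\sum_{s\in{\Bbb F}_q}\Big|\sum_{x\in E}\chi\big(s\|x-a\|\big)\Big|^2 .$$
The term $s=0$ contributes $|E|^2/q$, i.e. $|E|^2|E_z|/q$ after summing over $a\in E_z$; this is the main term. For $s\ne0$, completing the square gives $\sum_{x\in E}\chi(s\|x-a\|)=G_s^{\,d}\sum_m\widehat E(m)\chi\big(a\cdot m-\|m\|/(4s)\big)$, where $|G_s|=q^{1/2}$ and $\widehat E$ is the normalized Fourier transform of (the indicator of) $E$. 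Substituting, summing the geometric sum over $s\ne0$ (which forces $\|m\|=\|m'\|$ upon expanding the square), and summing over $a\in E_z$, the error term ${\mathcal E}:=\tfrac1q\sum_{s\ne0}\sum_{a\in E_z}|\cdots|^2$ collapses to
$$ {\mathcal E}\;=\;q^{d}\sum_{t\in{\Bbb F}_q}\sum_{a\in E_z}\big|\widehat E_t(a)\big|^2\;-\;q^{d-1}|E\cap E_z|,\qquad \widehat E_t(a):=\sum_{\|m\|=t}\widehat E(m)\,\chi(a\cdot m).$$

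The crux is to bound $\sum_t\sum_{a\in E_z}|\widehat E_t(a)|^2$, and this is where the product structure enters: $E_z$ lies in the hyperplane $\{x_d=z\}$. For $a=(u,z)$ with $u\in\pi(E)$ one has $\widehat E_t(a)=\sum_{\eta'\in{\Bbb F}_q^{d-1}}b_t(\eta')\chi(\eta'\cdot u)$ with $b_t(\eta')=\sum_{\pi(m)=\eta',\,\|m\|=t}\widehat E(m)\chi(m_d z)$; enlarging $\pi(E)$ to all of ${\Bbb F}_q^{d-1}$ and applying Plancherel \emph{there} gives $\sum_{a\in E_z}|\widehat E_t(a)|^2\le q^{d-1}\sum_{\eta'}|b_t(\eta')|^2$. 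Since for each $(t,\eta')$ the fiber $\{m:\pi(m)=\eta',\ \|m\|=t\}$ is the solution set of $m_d^2=t-\|\eta'\|$ and hence has at most two points, $|b_t(\eta')|^2\le2\sum_{\pi(m)=\eta',\,\|m\|=t}|\widehat E(m)|^2$, and summing over all $(t,\eta')$ recovers Plancherel on ${\Bbb F}_q^{d}$: $\sum_t\sum_{\eta'}|b_t(\eta')|^2\le2\sum_m|\widehat E(m)|^2=2q^{-d}|E|$. Hence $\sum_t\sum_{a\in E_z}|\widehat E_t(a)|^2\le2q^{-1}|E|$, so ${\mathcal E}\le2q^{d-1}|E|\le2|E|^2|E_z|/q$ by the hypothesis, and $\sum_{a\in E_z}\sum_t\nu_a(t)^2\le3|E|^2|E_z|/q$. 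Retaining the negative lower-order terms (and noting that fibers over $t=\|\eta'\|$ are singletons, which removes part of the factor $2$) pushes $C$ strictly below $3$; Chebyshev then yields $|E'_z|\gtrsim|E_z|$ and the theorem.

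The main obstacle is the estimate in the previous paragraph. The trivial bound $\sum_t\sum_{a\in{\Bbb F}_q^d}|\widehat E_t(a)|^2=|E|$ gives only ${\mathcal E}\lesssim q^d|E|$, which would demand $|E||E_z|\ge q^{d+1}$; gaining the extra factor of $q$ — effectively replacing the ambient dimension $d$ by $d-1$ in the Plancherel step — is exactly what the hyperplane position of $E_z$, combined with the at-most-two-points-per-fiber property of the quadratic form $\|\cdot\|$, provides. The rest — the character expansion, the Gauss-sum completion of the square, the Chebyshev/pigeonhole extraction of $E'_z$ — is routine; one could equivalently derive the same error bound geometrically by writing $\sum_{a\in E_z}\sum_t\nu_a(t)^2=\#\{(x,x',a)\in E\times E\times E_z:\|x-a\|=\|x'-a\|\}$ and counting, for $x\ne x'$, the points of $E_z$ lying on the hyperplane $(x-x')\cdot a=(\|x\|-\|x'\|)/2$.
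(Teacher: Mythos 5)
Your argument is in substance the paper's: both run a second-moment/Cauchy--Schwarz argument on the pinned counting function, obtain the main term $|E|^2|E_z|/q$ from $s=0$, and gain the crucial extra factor of $q$ (over the naive $q^{d+1}$ threshold) by extending the pin sum over the full hyperplane ${\Bbb F}_q^{d-1}\times\{z\}$, using orthogonality in the $d-1$ free coordinates of the pin, and exploiting that $u\mapsto u^2$ is at most two-to-one in the last coordinate. Your Fourier-side computation of the error term (via $\widehat{E}_t$, Plancherel in ${\Bbb F}_q^{d-1}$, and the two-point fibers $m_d^2=t-\|\eta'\|$) is equivalent to the paper's physical-side count of triples with $\pi(x)=\pi(x')$ and $2z(x_d-x_d')=x_d^2-{x_d'}^2$ --- as you yourself note in your closing sentence --- and both routes land on the same bound $\sum_{a\in E_z}\sum_t\nu_a(t)^2\le |E|^2|E_z|/q+2q^{d-1}|E|$.

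The one place your write-up is shakier than it should be is the claim that the retained lower-order corrections push the constant $C$ strictly below $3$: the term $q^{d-1}|E\cap E_z|$ can vanish and the singleton-fiber saving is of size $q^{d-1}\sum_{m_d=0}|\widehat{E}(m)|^2$, which is in general $o(1)$ relative to $q^{d-1}|E|$, so you only get $C=3-o(1)$ and your Markov step gives $|E_z'|\ge o(1)\cdot|E_z|$ rather than $|E_z'|\gtrsim|E_z|$. This is not a defect relative to the paper, whose own proof stops at the average bound $|E_z|^{-1}\sum_{a\in E_z}|\Delta_y^{(z)}(E)|>q/(1+2q^d|E|^{-1}|E_z|^{-1})\ge q/3$ and leaves the extraction of a positive-proportion subset implicit (which likewise only works cleanly with a constant somewhat smaller than $1/3$); but to make your Chebyshev step honest you should either weaken the constant in the conclusion (e.g.\ to $q/4$, taking $E_z'=\{a:\sum_t\nu_a(t)^2<4|E|^2/q\}$, so that $|E_z'|\ge\tfrac14|E_z|$) or strengthen the hypothesis to $|E||E_z|\ge Cq^d$.
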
 
Given $E\subset {\mathbb F}_q^d,$ we define 
$$P(E)=\{z\in {\mathbb F}_q: (\pi(y),z)\in E~~\mbox{for some}~y\in {\mathbb F}_q^d\}.$$ The set $P(E)$ is composed of all the last coordinates of elements in $E.$  
Observe that if $E$ is a product set, then $E_z\subset E$ for all $z\in P(E)$ and $\bigcup_{z\in P(E)}E_z=E.$ Moreover, $E_z$ and $E_{z^\prime}$ are disjoint if $ z\neq z^\prime$ and $|E_z|=|E_{z^\prime}|.$
This leads us to the following consequence of Theorem \ref{distpinnedproj}. 
\begin{corollary} \label{distpinnedprod} Suppose that $E=A_1 \times A_2 \times \dots \times A_d$, where $A_j$ is contained in ${\Bbb F}_q$. Suppose that 
$$ |E| \ge q^{\frac{d^2}{2d-1}}.$$ 
Then there exists a subset $E'\subset E$ with $|E'|\gtrsim |E|$ such that for every $y\in E'$ one has that
$$ |\Delta_y(E)| > \frac{q}{3}.$$ 
\end{corollary}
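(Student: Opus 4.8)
The plan is to derive the corollary directly from Theorem~\ref{distpinnedproj} by choosing the distinguished coordinate wisely and then summing the resulting pinned estimates over all values of that coordinate.

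First, since $E=A_1\times\cdots\times A_d$ and the statement is invariant under permuting the coordinates, I would relabel the factors so that $|A_d|=\min_{1\le j\le d}|A_j|$. The geometric-mean bound then gives $|A_d|^d\le |A_1|\cdots|A_d|=|E|$, hence $|A_d|\le |E|^{1/d}$. For a product set one has $P(E)=A_d$, and for each $z\in A_d$, $E_z=\pi(E)\times\{z\}$ with $|E_z|=|\pi(E)|=|E|/|A_d|$. Consequently
$$ |E|\,|E_z|=\frac{|E|^2}{|A_d|}\ge |E|^{2-1/d}=|E|^{\frac{2d-1}{d}}\ge\left(q^{\frac{d^2}{2d-1}}\right)^{\frac{2d-1}{d}}=q^d, $$
so the hypothesis $|E|\,|E_z|\ge q^d$ of Theorem~\ref{distpinnedproj} holds for every $z\in A_d$. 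This computation is exactly what pins down the exponent $\tfrac{d^2}{2d-1}$: it is the smallest $\alpha$ for which $|E|\ge q^{\alpha}$ still forces $|E|^2/|A_d|\ge q^d$ in the extremal case $|A_d|=|E|^{1/d}$.

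Next, for each $z\in A_d$ Theorem~\ref{distpinnedproj} yields a set $E'_z\subset E_z$ with $|E'_z|\gtrsim |E_z|$, the implied constant being the absolute one from that theorem, such that $|\Delta^{(z)}_y(E)|>q/3$ for every $(\pi(y),z)\in E'_z$. The key observation is that every element of $E'_z\subset E_z$ already has last coordinate $z$, so for such a pin $y$ we get $\tilde y=(\pi(y),z)=y$, and therefore $\Delta^{(z)}_y(E)=\{\|x-y\|:x\in E\}=\Delta_y(E)$; hence $|\Delta_y(E)|>q/3$ for all $y\in E'_z$.

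Finally I would set $E'=\bigcup_{z\in A_d}E'_z$. Since the slices $E_z$, $z\in A_d$, are pairwise disjoint and cover $E$, and $E'_z\subset E_z$, the sets $E'_z$ are pairwise disjoint, so $|E'|=\sum_{z\in A_d}|E'_z|\gtrsim \sum_{z\in A_d}|E_z|=|E|$; and by construction $|\Delta_y(E)|>q/3$ for every $y\in E'$. There is no real obstacle here beyond Theorem~\ref{distpinnedproj} itself. The only point requiring a bit of care is twofold: ordering the factors so the smallest one occupies the distinguished coordinate (needed to make the exponent come out to $\tfrac{d^2}{2d-1}$ rather than $\tfrac{d+1}{2}$), and checking that the per-slice density constant does not degrade as $z$ ranges over $A_d$, which it does not.
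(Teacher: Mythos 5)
Your argument is correct and is essentially the paper's own proof: relabel so the smallest factor sits in the last coordinate (so $|\pi(E)|\ge|E|^{(d-1)/d}$ and hence $|E|\,|E_z|\ge|E|^{(2d-1)/d}\ge q^d$), apply Theorem~\ref{distpinnedproj} slice by slice, and take the disjoint union of the $E'_z$. You have merely made explicit the pigeonholing and the identification $\tilde y=y$ for $y\in E'_z$ that the paper leaves implicit.
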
 

The Corollary immediately follows  from Theorem \ref{distpinnedproj}. To see this, since $E$ is a product set,
 after perhaps relabeling some coordinates, we may assume, using straightforward pigeon-holing, that $E=\pi(E) \times P(E)$, 
where $|\pi(E)| \ge {|E|}^{\frac{d-1}{d}},$ and we have $|\pi(E)|=|E_z|$ for all $z\in P(E).$ 
Since $|E|\ge q^{\frac{d^2}{2d-1}}$, we see that $|E||E_z|\geq q^d$ for all $z\in P(E).$
Applying Theorem \ref{distpinnedproj}, we can choose the set $E^\prime_z$ for all $z\in P(E)$ which satisfies the conclusion 
of Theorem \ref{distpinnedproj}. Taking $ E^\prime= \bigcup_{z\in P(E)} E^\prime_z$ , the proof of Corollary 
\ref{distpinnedprod} is complete.
Observe that we could have made a much weaker, though more technical, assumption on the structure of $E$. 

Given $y\in \pi(E) \times {\mathbb F}_q$ and $z \in {\mathbb F}_q,$
define the pinned dot product set to be 
$$ \Pi^{(z)}_y(E)=\{x\cdot \tilde{y}:x \in E \},$$ 
where $\tilde{y}= (\pi(y), z)\in E_z.$
We use the method of proof of Theorem \ref{distpinnedproj} above to obtain the following. 
\begin{theorem} \label{dotpinnedproj} Let $E \subset {\Bbb F}_q^d$ and let $E_z , z\in {\mathbb F}_q^*,$ be defined as above. Suppose that 
$$ |E||E_z| \ge q^d.$$ 

Then there exists a $E'_z\subset E_z$ with $|E'_z|\gtrsim |E_z|$ such that for every $(\pi(y), z) \in E'_z,$
\begin{equation*}  |\Pi^{(z)}_y(E)| > \frac{q}{2}.\end{equation*}
\end{theorem}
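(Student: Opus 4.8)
The plan is to mimic the Fourier-analytic argument for the pinned distance set (Theorem \ref{distpinnedproj}), replacing the level sets of the quadratic form $\|x-\tilde y\|$ by the affine level sets of the bilinear form $x\cdot\tilde y$. For a fixed pin $\tilde y=(\pi(y),z)\in E_z$ and a parameter $t\in\mathbb F_q$, set
$$\nu_y(t)=\sum_{\substack{x\in E\\ x\cdot\tilde y=t}}1=\#\{x\in E: x\cdot\tilde y=t\},$$
so that $\Pi^{(z)}_y(E)=\{t:\nu_y(t)\neq 0\}$ and $\sum_t\nu_y(t)=|E|$. By Cauchy--Schwarz, $|E|^2=\big(\sum_t\nu_y(t)\big)^2\le |\Pi^{(z)}_y(E)|\cdot\sum_t\nu_y^2(t)$, so it suffices to show that for a positive proportion of pins $\tilde y\in E_z$ one has $\sum_t\nu_y^2(t)<\tfrac{2|E|^2}{q}$; summing this bad-pin count over $E_z$ reduces everything to an upper bound on $\sum_{(\pi(y),z)\in E_z}\sum_t\nu_y^2(t)$.

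The next step is to expand $\nu_y(t)$ using the orthogonality relation $\frac1q\sum_{s\in\mathbb F_q}\chi(s(x\cdot\tilde y-t))=\mathbf 1[x\cdot\tilde y=t]$ for a nontrivial additive character $\chi$. Then
$$\sum_{t\in\mathbb F_q}\nu_y^2(t)=\frac{|E|^2}{q}+\frac1q\sum_{s\neq 0}\sum_{x,x'\in E}\chi\!\big(s\,(x-x')\cdot\tilde y\big),$$
the first term being the diagonal $s=0$ contribution. Now sum over the pins $\tilde y=(\pi(y),z)$ with $(\pi(y),z)\in E_z$; writing $\tilde y=(u,z)$ with $u\in\pi(E)$ and splitting $x=(x'',x_d)$, the exponential becomes $\chi\big(s\,(x''-x'''')\cdot u\big)\chi\big(sz(x_d-x_d'')\big)$, and the sum over $u\in\pi(E)$ is exactly $\widehat{\pi(E)}$ evaluated at $s(x''-x'''')$ (up to normalization conventions fixed earlier in the paper). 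Thus the whole quantity is controlled by
$$\frac{|E_z|}{q}|E|^2+\frac1q\sum_{s\neq0}\sum_{x,x'\in E}\widehat{\pi(E)}\big(s(x''-x'''')\big)\,\chi\big(sz(x_d-x_d'')\big),$$
and one estimates the error term by pulling out $\sum_{x,x'}\big|\widehat{\pi(E)}(s(x''-x''''))\big|$, applying Cauchy--Schwarz in $x,x'$ (or in the frequency variable), and invoking Plancherel, $\sum_m|\widehat{\pi(E)}(m)|^2=q^{-d+1}|\pi(E)|$. This should yield an error of size $O\big(|E|^2|E_z|^{1/2}q^{(d-1)/2}\cdot q^{-\text{(normalization)}}\big)$, which by the hypothesis $|E||E_z|\ge q^d$ is dominated by the main term $q^{-1}|E_z||E|^2$.

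Comparing with the main term, one concludes $\sum_{(\pi(y),z)\in E_z}\sum_t\nu_y^2(t)\lesssim \tfrac{|E_z||E|^2}{q}$, so by Markov the set of pins with $\sum_t\nu_y^2(t)\ge \tfrac{2|E|^2}{q}$ has size at most a fixed fraction of $|E_z|$; discarding these and shrinking the constant gives $E'_z\subset E_z$ with $|E'_z|\gtrsim|E_z|$ and $|\Pi^{(z)}_y(E)|>\tfrac{q}{2}$ for every $(\pi(y),z)\in E'_z$. The main obstacle is the bookkeeping of the Fourier normalization constants and verifying that the error term is genuinely dominated by the main term under the sharp hypothesis $|E||E_z|\ge q^d$ — in particular one must be careful that the $s$-sum over $\mathbb F_q^*$ does not lose a factor of $q$, which is where the extra averaging over the pin and the use of $z\neq 0$ (so that $sz$ ranges over all of $\mathbb F_q^*$) become essential. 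I expect the dot-product case to be slightly cleaner than the distance case precisely because the level sets are hyperplanes rather than quadrics, so no Gauss sums over the sphere are needed.
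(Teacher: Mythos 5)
Your overall skeleton — the counting function $\nu_y(t)$, orthogonality in $s$ with the $s=0$ term giving the main contribution $q^{-1}|E|^2|E_z|$, and a Cauchy--Schwarz/Markov step at the end to extract a positive proportion of good pins — is exactly the paper's architecture, and your per-pin Markov argument at the end is a perfectly good (arguably cleaner) way to pass from the averaged bound to the set $E'_z$. The gap is in the one step that actually carries the theorem: the estimate of the error term. You propose to write the pin sum as $\widehat{\pi(E)}\bigl(s(x''-x'''')\bigr)$, pull out absolute values, and close with Cauchy--Schwarz and Plancherel. This cannot work. The hypothesis $|E||E_z|\ge q^d$ is exactly calibrated so that the main term $q^{-1}|E|^2|E_z|$ dominates an error of size $q^{d-1}|E|$ — there is no slack. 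Once you replace $\widehat{\pi(E)}$ by $|\widehat{\pi(E)}|$ you discard both the oscillation in $x_d$ and the positivity of the pin average; Cauchy--Schwarz over $(s,x,x')$ plus Plancherel for $\pi(E)$ then yields (in the unnormalized count) an error of order $q^{(d-1)/2}|E|^2|E_z|^{1/2}$, and requiring this to be $\lesssim q^{-1}|E|^2|E_z|$ forces $|E_z|\gtrsim q^{d+1}$, which is impossible since $|E_z|\le q^{d-1}$. So the estimate you sketch loses roughly a factor of $q^{d/2}/|E_z|^{1/2}$ and does not prove the theorem at the stated threshold. Your own closing caveat ("the main obstacle is\dots verifying that the error term is genuinely dominated") is precisely where the proof is missing.

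The idea you need is a completion argument exploiting positivity, not Plancherel for $\pi(E)$. Write
$$II=q^{-1}\sum_{s\neq 0}\sum_{\tilde y\in E_z}\Bigl|\sum_{x\in E}\chi(s\,x\cdot\tilde y)\Bigr|^2,$$
which is a sum of nonnegative terms over the pins; therefore you may enlarge the pin sum from $E_z$ to the full affine hyperplane ${\mathbb F}_q^{d-1}\times\{z\}$. Orthogonality in $\pi(\tilde y)$ over all of ${\mathbb F}_q^{d-1}$ is then \emph{exact}: it collapses the double sum to pairs with $\pi(x)=\pi(x')$ and produces the factor $q^{d-1}$, with no $|\widehat{\pi(E)}|$ ever appearing. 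What remains is $q^{d-2}\sum_{s\neq0}\sum_{\pi(x)=\pi(x')}\chi\bigl(sz(x_d-x_d')\bigr)$, and here $z\neq0$ is used: the diagonal $x_d=x_d'$ (i.e.\ $x=x'$) contributes $q^{d-2}(q-1)|E|$, while each off-diagonal pair contributes $\sum_{s\neq0}\chi(sz(x_d-x_d'))=-1$, so the off-diagonal total is negative and can be dropped. This gives the sharp bound $II<q^{d-1}|E|$, which together with your main term and the hypothesis $|E||E_z|\ge q^d$ closes the argument.
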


\subsection{Sums and products implications} 

A related line investigation that has received much recent attention is the
following. 
Let $A \subset {\Bbb F}_q$. How large does $A$ need to be to ensure that 
$$ {\Bbb F}_q^{*} \subset \underbrace{A\cdot A+\ldots+A\cdot A}_{d\ \mbox{{\small times} }}$$ or, more modestly, 
$$ |A \cdot A+\ldots+A \cdot A| \ge cq$$ for some $c>0$. 

A result due to Bourgain (\cite{Bo05}) gave the following answer to this question.
\begin{theorem}  Let $A$ be a subset of $\mathbb F_q$ such that $|A|\geq C q^{\frac{3}{4}}$ then $A\cdot A+A\cdot A
+A\cdot A =\mathbb F_q$.
\end{theorem}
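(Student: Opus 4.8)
The plan is to prove the sharper pointwise statement: for \emph{every} $t\in\mathbb F_q$ the number of representations
$$N(t):=\#\{(a_1,a_2,a_3,b_1,b_2,b_3)\in A^6:\ a_1b_1+a_2b_2+a_3b_3=t\}$$
is strictly positive as soon as $|A|$ is a sufficiently large multiple of $q^{3/4}$; since $t$ is arbitrary, this is exactly the assertion $A\cdot A+A\cdot A+A\cdot A=\mathbb F_q$. Fixing a nontrivial additive character $\chi$ of $\mathbb F_q$ and applying orthogonality, one writes
$$N(t)=\frac1q\sum_{s\in\mathbb F_q}\chi(-st)\,S(s)^3,\qquad\text{where}\qquad S(s):=\sum_{a,b\in A}\chi(sab).$$
The term $s=0$ gives the main term $S(0)^3/q=|A|^6/q$; everything hinges on showing that the contribution of $s\neq0$ has absolute value strictly less than $|A|^6/q$.

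To control the error I would estimate $S(s)$ for $s\neq0$ in two complementary ways. Writing $S(s)=\sum_{a\in A}g(sa)$ with $g(u)=\sum_{b\in A}\chi(ub)$, Cauchy--Schwarz together with the Parseval identity $\sum_{u\in\mathbb F_q}|g(u)|^2=q|A|$ (using that $a\mapsto sa$ is injective on $A$ when $s\neq0$) gives the uniform bound $|S(s)|\le q^{1/2}|A|$. On the other hand, expanding the square and using orthogonality once more,
$$\sum_{s\neq0}|S(s)|^2=q\,E^{\times}(A)-|A|^4,\qquad E^{\times}(A):=\#\{(a,b,a',b')\in A^4:\ ab=a'b'\},$$
and the trivial multiplicative-energy bound $E^{\times}(A)\le|A|^3+O(|A|^2)$ (for $a'\neq0$ the relation $b'=ab/a'$ determines $b'$ from $(a,b,a')$, the case $a'=0$ being negligible) yields $\sum_{s\neq0}|S(s)|^2\lesssim q|A|^3$.

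Combining the two estimates via $\sum_{s\neq0}|S(s)|^3\le\big(\max_{s\neq0}|S(s)|\big)\sum_{s\neq0}|S(s)|^2$ bounds the error term by $\tfrac1q\cdot q^{1/2}|A|\cdot q|A|^3=q^{1/2}|A|^4$, so that
$$N(t)\ \ge\ \frac{|A|^6}{q}-c\,q^{1/2}|A|^4$$
for an absolute constant $c$. This is positive precisely when $|A|^2>c\,q^{3/2}$, i.e. when $|A|\ge C q^{3/4}$ with $C$ a large enough absolute constant, which completes the proof.

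The only delicate points are the elementary multiplicative-energy estimate and the bookkeeping around the element $0\in A$ in the two Cauchy--Schwarz steps; as in the $L^2$ analysis of $\sigma_E$ behind Theorems~\ref{distpinnedproj} and~\ref{dotpinnedproj}, the energy bound is exactly the place where the interaction between addition and multiplication in $\mathbb F_q$ is used, and it is the step I expect to be the crux of the write-up. (A weaker conclusion, $|A\cdot A+A\cdot A+A\cdot A|>q/2$, already follows from Theorem~\ref{dotpinnedproj} applied to $E=A\times A\times A$ under the milder hypothesis $|A|\gtrsim q^{3/5}$, but obtaining the full equality with $\mathbb F_q$ seems to require the pointwise representation count above.)
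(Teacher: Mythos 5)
Your argument is correct: the orthogonality identity $N(t)=q^{-1}\sum_s\chi(-st)S(s)^3$, the uniform bound $|S(s)|\le q^{1/2}|A|$ for $s\neq 0$ via Cauchy--Schwarz and Parseval, and the second-moment identity $\sum_s|S(s)|^2=qE^{\times}(A)$ with the trivial energy bound $E^{\times}(A)\le |A|^3+O(|A|^2)$ all check out, and they combine exactly as you say to give $N(t)\ge |A|^6/q-cq^{1/2}|A|^4>0$ once $|A|\ge Cq^{3/4}$ (indeed $C=\sqrt{3}$ essentially suffices). Note, however, that the paper does not prove this statement at all: it is quoted as Bourgain's theorem with a citation to \cite{Bo05}, where the original argument proceeds through sum--product/arithmetic-combinatorial machinery rather than a direct representation count. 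Your proof is instead the standard exponential-sum argument, and it is in fact the same circle of ideas as the paper's own Theorem \ref{kickass}: the $d=2$ case of that theorem already gives $A\cdot A+A\cdot A\supseteq\mathbb F_q^{*}$ for $|A|>q^{3/4}$, and the paper's subsequent remark (adding any set $B$ with $|B|>1$, e.g.\ $B=A\cdot A$, to capture $0$) then recovers the present statement. So your route buys a short, self-contained proof with explicit constants of exactly the quoted theorem, while the paper's machinery buys the stronger two-fold conclusion on $\mathbb F_q^{*}$; your closing parenthetical about deducing the weaker bound $|A\cdot A+A\cdot A+A\cdot A|>q/2$ from Theorem \ref{dotpinnedproj} under $|A|\gtrsim q^{3/5}$ is also accurate.
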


  Due to the misbehavior of the zero element it is not possible for $A\cdot A+A\cdot A=\mathbb F_q$ unless $A$ is a positive proportion of the elements of $\mathbb F_q$. However, it is
reasonable to conjecture that if $|A| \ge
C_{\epsilon}q^{\frac{1}{2}+\epsilon}$, then $A\cdot A+A\cdot A\supseteq \mathbb F^*_q$. This
result cannot hold, especially in the setting of general finite fields
if $|A|=\sqrt{q}$ because $A$ may in fact be a subfield. See also
\cite{BGK06}, \cite{Cr04},  \cite{TV06} and the references contained
therein on recent progress related to this problem and its analogs.
For example, Glibichuk and Konyagin, \cite{GK06} (see also \cite{Gl06}), proved in the case of prime fields ${\mathbb Z}_p$ that for $|A|>\sqrt{p}$
that on case take $d=8$.  This was extended to arbitrary finite fields by
Glibichuk in \cite{GK08}.
These results were
achieved by methods of arithmetic combinatorics.

The third and fourth listed authors used character sum machinery to obtain the following result.
\begin{theorem} \label{kickass} Let $A \subset {\mathbb F}_q^*$. \begin{itemize} \item
If $|A|>q^{\frac{1}{2}+\frac{1}{2d}}$ then $A \cdot A+\ldots+A \cdot A \supseteq {\mathbb F}_q^{*} .$
\item If $|A|\geq q^{\frac{1}{2}+\frac{1}{2(2d-1)}}$ then $|A \cdot A+\ldots+A \cdot A|\geq \frac{1}{2}q .$ 
\end{itemize} 
\end{theorem}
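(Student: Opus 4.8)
The plan is to reduce both parts to the count
\[
N(t)=\#\{(x,y)\in\mathcal A\times\mathcal A:\ x\cdot y=t\},\qquad \mathcal A=\underbrace{A\times\cdots\times A}_{d}\subset\mathbb F_q^{d},
\]
for $t\in\mathbb F_q$, since $A\cdot A+\cdots+A\cdot A$ ($d$ summands) is precisely $\{t:N(t)>0\}$. Fix a nontrivial additive character $\chi$ of $\mathbb F_q$ and set $\widehat{\mathcal A}(m)=\sum_{y\in\mathcal A}\chi(m\cdot y)$. I would record two Fourier expansions of $N$. Writing $N(t)=\sum_{x\in\mathcal A}\#\{y\in\mathcal A:x\cdot y=t\}$ and expanding the indicator of the hyperplane $x\cdot y=t$ gives
\[
N(t)=\frac{|A|^{2d}}{q}+\frac1q\sum_{x\in\mathcal A}\ \sum_{s\neq 0}\chi(-st)\,\widehat{\mathcal A}(sx);
\]
expanding instead the indicator of $x\cdot y=t$ directly and factoring the exponential sum coordinate by coordinate (possible since $\mathcal A$ is a Cartesian product) gives $N(t)=q^{-1}\sum_{s}\chi(-st)K(s)^{d}$, where $K(s)=\sum_{a,a'\in A}\chi(saa')$ and $K(0)=|A|^{2}$.

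For the first statement I would use the first expansion and put $\mathrm{err}(t)=N(t)-|A|^{2d}/q$. Cauchy--Schwarz in $x$, then enlarging the range of $x$ from $\mathcal A$ to $\mathbb F_q^{d}$, bounds $|\mathrm{err}(t)|\le q^{-1}|A|^{d/2}\,T(t)^{1/2}$ with $T(t)=\sum_{x\in\mathbb F_q^{d}}\bigl|\sum_{s\neq0}\chi(-st)\widehat{\mathcal A}(sx)\bigr|^{2}$. Expanding the modulus squared, substituting $u=sx$, and using orthogonality collapses $T(t)$ to $q^{d}\sum_{\lambda\neq0}\bigl(\sum_{s\neq0}\chi(-s(1-\lambda)t)\bigr)\#\{(y,z)\in\mathcal A^{2}:y=\lambda z\}$. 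For $t\neq0$ the inner character sum equals $q-1$ when $\lambda=1$ and $-1$ otherwise, so the (non-negative) collinear-pair counts enter with a non-positive sign and $T(t)\le q^{d}(q-1)|\mathcal A|\le q^{d+1}|A|^{d}$; hence $|\mathrm{err}(t)|\le q^{(d-1)/2}|A|^{d}$ for all $t\neq0$. Therefore $N(t)>0$ as soon as $|A|^{2d}/q>q^{(d-1)/2}|A|^{d}$, i.e.\ as soon as $|A|>q^{\frac12+\frac1{2d}}$. (The case $t=0$ must be discarded; in fact the sumset need not contain $0$, which is exactly why the conclusion reads $\supseteq\mathbb F_q^{*}$.)

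For the second statement the hypothesis is weaker, so rather than pointwise positivity I would run Cauchy--Schwarz on $S=\{t:N(t)>0\}$: from $\sum_{t}N(t)=|A|^{2d}$ we get $|A|^{4d}\le|S|\sum_{t}N(t)^{2}$, while the second expansion gives $\sum_{t}N(t)^{2}=q^{-1}\sum_{s}|K(s)|^{2d}=|A|^{4d}/q+q^{-1}\sum_{s\neq0}|K(s)|^{2d}$. Two estimates for $K$ close the argument: writing $K(s)=\sum_{a\in A}g(sa)$ with $g(u)=\sum_{a'\in A}\chi(ua')$, Cauchy--Schwarz in $a$ together with Plancherel ($\sum_{u}|g(u)|^{2}=q|A|$) give $|K(s)|\le q^{1/2}|A|$ for $s\neq0$, and orthogonality gives $\sum_{s}|K(s)|^{2}=q\,E^{\times}(A)$ with $E^{\times}(A)=\#\{(a,b,a',b')\in A^{4}:ab=a'b'\}\le|A|^{3}$ (this is where $A\subset\mathbb F_q^{*}$ enters). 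Interpolating, $\sum_{s\neq0}|K(s)|^{2d}\le(q^{1/2}|A|)^{2d-2}\cdot q|A|^{3}=q^{d}|A|^{2d+1}$, which is $\le|A|^{4d}$ exactly when $|A|^{2d-1}\ge q^{d}$, i.e.\ $|A|\ge q^{\frac12+\frac1{2(2d-1)}}$; then $|S|\ge q/2$.

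The step I expect to be the main obstacle is the modulus-squared expansion of $T(t)$ in the first part: one must verify that for $t\neq0$ the off-diagonal pairs $s\neq s'$ contribute with the favorable sign (equivalently that $\sum_{s\neq0}\chi(cs)=-1$ for $c\neq0$), so that the non-negative count of collinear pairs of $\mathcal A$ cannot accumulate. This sign is precisely the codimension-one gain that turns $q^{d/2}$ into $q^{(d-1)/2}$ and yields the sharp exponent $\tfrac12+\tfrac1{2d}$; a crude substitution of $|K(s)|\le q^{1/2}|A|$ into the first expansion would only give $\tfrac12+\tfrac1{2(d-1)}$. Everything else is routine character-sum bookkeeping, with $A\subset\mathbb F_q^{*}$ used to keep every coordinate of each point of $\mathcal A$ nonzero and to supply $E^{\times}(A)\le|A|^{3}$.
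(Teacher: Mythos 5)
Your proof is correct: both the pointwise bound $N(t)=|A|^{2d}/q+O(q^{(d-1)/2}|A|^d)$ for $t\neq 0$ (with the sign of the off-diagonal $\lambda\neq 1$ terms correctly exploited) and the second-moment argument via $\sum_t N(t)^2=q^{-1}\sum_s|K(s)|^{2d}$ together with $|K(s)|\le q^{1/2}|A|$ and $\sum_s|K(s)|^2=qE^{\times}(A)\le q|A|^3$ check out, and they yield exactly the exponents $\tfrac12+\tfrac1{2d}$ and $\tfrac12+\tfrac1{2(2d-1)}$. Note that this paper only quotes Theorem \ref{kickass} from \cite{HIKR07} without reproducing a proof; your argument is essentially the character-sum/dot-product-set argument of that source (viewing $A\cdot A+\cdots+A\cdot A$ as the dot products of $A^d$ and using the multiplicative energy of $A$), so there is nothing to flag.
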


In view of Glibichuk' result (\cite{GK08}) one may note that Theorem \ref{kickass} is only interesting in the case $d<8$.  It follows immediately that in the perhaps the most
interesting case $d=2$, that $A\cdot A+A\cdot A \supseteq {\mathbb F}_q^{*} $ for $|A|>q^{\frac{3}{4}},$ and
$ |A\cdot A+A\cdot A| \geq \frac{q}{2}$ for $|A| > q^{\frac{2}{3}}.$  One may note that if $A\cdot A+A\cdot A=\mathbb F_q^*$ then
 only a minimal amount of additional additive structure is needed to get the zero element.  Specifically, if $|A|>q^{\frac{3}{4}}$ and $B$ is any subset of $\mathbb F_q$ with $|B|>1$ then $A\cdot A+A\cdot A+B=\mathbb F_q$.

Shparlinski (\cite{S07}) using multiplicative character sums showed that 
 if $ |A| \ge q^{\frac{2}{3}}$ then for any $z \in A$, 
$ |A \cdot A+zA| \ge \frac{q}{2}.$

An immediate implication of Theorem \ref{dotpinnedproj} is the following which says that if a set $A$ is sufficiently robust then a large class of linear equations have solutions in $A$.
\begin{theorem}
Let $A\subset \mathbb F_q$ and $z\in \mathbb F^*_q$. If $|A|\geq q^{\frac{d}{2d-1}},$ then
 there exists a subset $E'\subset A\times \dots \times A=A^{d-1}$ with $|E'|\gtrsim |A|^{d-1}$ such that  for any $(a_1,\dots, a_{d-1}) \in E'$,
$$|a_1A+a_2A+\dots +a_{d-1}A+zA| > \frac{q}{2} $$
 where $a_j A=\{a_ja:a \in A\},j=1,\dots,d-1$.
\end{theorem}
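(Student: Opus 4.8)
The plan is to derive this sum-product statement as a direct corollary of Theorem~\ref{dotpinnedproj} applied to a suitably chosen Cartesian product in $\mathbb F_q^d$. Set $E = A \times A \times \dots \times A \times zA \subset \mathbb F_q^d$, so $\pi(E) = A^{d-1}$ (up to relabeling, since $z \in \mathbb F_q^*$ means $|zA| = |A|$), and fix the last coordinate by taking $E_z' := A^{d-1} \times \{z'\}$ for an appropriate $z' \in P(E) = zA$; alternatively, and more cleanly, take $E = A^{d-1} \times \{z\}$ directly as a slice living in $\mathbb F_q^d$ with $|E| = |A|^{d-1}$ and $|E_z| = |A|^{d-1}$ as well. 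The hypothesis $|A| \ge q^{\frac{d}{2d-1}}$ gives $|E|\,|E_z| = |A|^{2(d-1)} \ge q^{\frac{2d(d-1)}{2d-1}}$, and since $\frac{2d(d-1)}{2d-1} \ge d$ exactly when $2d(d-1) \ge d(2d-1)$, i.e. $2d^2 - 2d \ge 2d^2 - d$, i.e. $-2d \ge -d$ — wait, this needs care. Let me instead record that the correct bookkeeping is $|A|^{d-1} \ge q^{\frac{d(d-1)}{2d-1}}$, so $|E|\,|E_z| \ge q^{\frac{2d(d-1)}{2d-1}}$, and one checks $\tfrac{2d(d-1)}{2d-1} \ge 1$ is too weak; the point is rather that in the Cartesian-product reduction (Corollary~\ref{distpinnedprod}) the exponent $\tfrac{d^2}{2d-1}$ on $|E| = |A|^d$ translates to $|A| \ge q^{\frac{d}{2d-1}}$, and the identical pigeonholing applies verbatim to the dot-product version Theorem~\ref{dotpinnedproj}.

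Concretely, first I would invoke the Cartesian-product pigeonholing exactly as in the proof of Corollary~\ref{distpinnedprod}: writing $E = A^{d-1} \times zA$ (identifying the $d$th-coordinate set with $zA$, legitimate since $z \neq 0$), for each $z' \in zA$ the slice $E_{z'} = A^{d-1} \times \{z'\}$ has $|E_{z'}| = |A|^{d-1}$, and $|E| = |A|^d$. The condition $|A| \ge q^{\frac{d}{2d-1}}$ forces $|E|\,|E_{z'}| = |A|^{2d-1} \ge q^d$, which is precisely the hypothesis of Theorem~\ref{dotpinnedproj}. Applying that theorem to each slice $E_{z'}$ yields $E_{z'}' \subset E_{z'}$ with $|E_{z'}'| \gtrsim |E_{z'}|$ such that $|\Pi_y^{(z')}(E)| > q/2$ for every $(\pi(y), z') \in E_{z'}'$; taking the union $E' = \bigcup_{z' \in zA} E_{z'}'$ gives a subset of $E$ of size $\gtrsim |E| = |A|^d$.

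Second, I would unpack the pinned dot product $\Pi_y^{(z')}(E)$ into the additive form in the statement. For $\tilde y = (\pi(y), z') = (a_1, \dots, a_{d-1}, z')$ with each $a_j \in A$ and $z' \in zA$, and $x = (x_1, \dots, x_{d-1}, x_d) \in E = A^{d-1} \times zA$, we have $x \cdot \tilde y = a_1 x_1 + \dots + a_{d-1} x_{d-1} + z' x_d$. As $x$ ranges over $E$, the coordinates $x_1, \dots, x_{d-1}$ range independently over $A$ and $x_d$ ranges over $zA$; writing $x_d = z w$ with $w \in A$ absorbs one factor, but it is cleaner to phrase the last term as $z' \cdot zA$. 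To match the stated conclusion exactly — namely $|a_1 A + \dots + a_{d-1} A + zA| > q/2$ — one sets $z' = z$ itself (note $z \in zA$ iff $1 \in A$, which need not hold), so more robustly one should either allow the last summand to be $z' z A$ for some $z' \in zA$, or, better, run the construction with $E = A^{d-1} \times \{z\}$ as a single slice in $\mathbb F_q^d$: then $\Pi_y^{(z)}(E) = a_1 A + \dots + a_{d-1} A + z A$ for $(a_1, \dots, a_{d-1}) = \pi(y) \in E' \subset A^{d-1}$, which is exactly the claim. The hypothesis check is then $|E|\,|E_z| = |A|^{d-1} \cdot |A|^{d-1} = |A|^{2(d-1)}$, and we need this $\ge q^d$; since $|A| \ge q^{\frac{d}{2d-1}}$ gives $|A|^{2(d-1)} \ge q^{\frac{2d(d-1)}{2d-1}}$, and $\frac{2d(d-1)}{2d-1} = \frac{2d^2 - 2d}{2d-1} < d$ for $d \ge 2$, this slice-only approach is too weak — so one genuinely needs the product $E = A^d$ (or $A^{d-1}\times zA$) so that $|E| = |A|^d$ and the exponent balances.

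The main obstacle is therefore purely bookkeeping: pinning down which Cartesian product to feed into Theorem~\ref{dotpinnedproj} so that (i) the hypothesis $|E|\,|E_z| \ge q^d$ follows from $|A| \ge q^{\frac{d}{2d-1}}$, and (ii) the resulting pinned dot product set unpacks to $a_1 A + \dots + a_{d-1} A + zA$ with the pins ranging over a positive proportion of $A^{d-1}$. The resolution is to take $E = A^{d-1} \times (zA)$ (a genuine product, so $|E| = |A|^d$ and, slicing along the last coordinate, $|E_{z'}| = |A|^{d-1}$ with $|E|\,|E_{z'}| = |A|^{2d-1} \ge q^d$), apply Theorem~\ref{dotpinnedproj} to each slice, and observe that for the pin with last coordinate $z' \in zA$ the dot product over $x \in E$ gives sums of the form $a_1 A + \dots + a_{d-1} A + z' (zA)$; since $z' \in zA$ we may write $z' = z a_0$ with $a_0 \in A$ and rename, or simply note the stated form is obtained on the slice indexed by the element of $zA$ equal to $z$ when applicable, and in general one absorbs constants. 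No new analysis is required beyond Theorem~\ref{dotpinnedproj} and the elementary pigeonholing already used for Corollary~\ref{distpinnedprod}.
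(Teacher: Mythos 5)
There is a genuine gap here, and it is exactly the bookkeeping issue you flagged but did not resolve. You correctly identify that Theorem \ref{dotpinnedproj} is the engine and that the size condition should read $|E|\,|E_z|=|A|^{2d-1}\ge q^d$, which is equivalent to $|A|\ge q^{\frac{d}{2d-1}}$. But none of the sets you feed into the theorem produces the sumset $a_1A+\dots+a_{d-1}A+zA$ for the \emph{given} $z$. With $E=A^{d-1}\times zA$ and pin $\tilde y=(a_1,\dots,a_{d-1},z')$, the last coordinate of $x$ ranges over $zA$, so the final summand is $z'\cdot(zA)=(z'z)A$; writing $z'=za_0$ only turns this into $z^2a_0A$, and ``absorbing constants'' cannot convert it into $zA$ because $z$ is a prescribed element of $\mathbb F_q^*$, not a free parameter. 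The single-slice variant $E=A^{d-1}\times\{z\}$ fails the size hypothesis, as you computed, and moreover its pinned dot product set is $a_1A+\dots+a_{d-1}A+z^2$ (a translate, since $x_d\equiv z$), not $a_1A+\dots+a_{d-1}A+zA$.

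The missing observation is that in Theorem \ref{dotpinnedproj} the set $E_z=\pi(E)\times\{z\}$ is \emph{not} required to be a subset of $E$: the pin $\tilde y$ ranges over $E_z$ while $x$ ranges over $E$, and $z\in\mathbb F_q^*$ is arbitrary. So the factor $z$ should come from the pin's last coordinate, not from the last factor of $E$. Take $E=A\times\dots\times A=A^d$ (all $d$ factors equal to $A$). Then $\pi(E)=A^{d-1}$, $E_z=A^{d-1}\times\{z\}$, and $|E|\,|E_z|=|A|^{2d-1}\ge q^d$. For $\tilde y=(a_1,\dots,a_{d-1},z)$ and $x\in A^d$ one has $x\cdot\tilde y=a_1x_1+\dots+a_{d-1}x_{d-1}+zx_d$, and as the coordinates of $x$ range independently over $A$ this sweeps out exactly $a_1A+\dots+a_{d-1}A+zA$. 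Theorem \ref{dotpinnedproj} then hands you $E_z'\subset E_z$ of size $\gtrsim|A|^{d-1}$ on which this set has more than $q/2$ elements; identifying $E_z'$ with its projection $E'\subset A^{d-1}$ finishes the proof. No pigeonholing over slices (as in Corollary \ref{distpinnedprod}) is needed at all, since only the single value $z$ of the last coordinate of the pin is relevant.
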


\subsection{$k$-simplices}
Let $P_k$ denote a $k$-simplex, that is $k+1$ points spanning a $k$ dimensional subspace.
Given another $k$-simplex $P'_k$ we write $P'_k\sim P_k$ if  there exists a $\tau \in {\Bbb F}_q^d$ and an
$O \in SO_d({\Bbb F}_q)$, the set of $d$-by-$d$ orthogonal matrices over ${\Bbb F}_q$ such that 
$$ P'_k=O (P_k)+\tau.$$ 
For $E \subset {\Bbb F}_q^d$ define
$$\mathcal{T}_k(E)=\{P_k \in E \times \dots \times E\} \ / \sim.$$ 
 
Under this equivalence relation one may specify a simplex by the distances determined by its vertices.  This follows from the following simple lemma from \cite{HI07}. 
\begin{lemma}\label{uptocrap} Let $P_k$ be a simplex with vertices
$V_0, V_1, \dots, V_k$, $V_j \in {\Bbb F}_q^d$. Let $P'$ be another
simplex with vertices $V'_0, V'_1, \dots, V'_k$. Suppose that
\begin{equation} \label{equalnorm} ||V_i-V_j||=||V'_i-V'_j|| \end{equation}
for all $i,j$.
Then there exists $\tau \in {\Bbb F}_q^d$ and $O \in SO_d({\Bbb F}_q)$ such
that $\tau+O(P)=P'$.
\end{lemma}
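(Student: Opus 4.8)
The plan is to deduce the lemma from Witt's extension theorem for quadratic spaces over a field of characteristic $\neq 2$ (which covers $\mathbb{F}_q$ whenever $q$ is odd, the case in which $\|\cdot\|$ is a nondegenerate form), after a translation normalization and the polarization identity; the one genuinely delicate point will be forcing the extending isometry to have determinant $1$ rather than $-1$.

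First I would translate: since $\|V_i-V_j\|$ depends only on the difference $V_i-V_j$, replacing $P$ by $P-V_0$ and $P'$ by $P'-V_0'$ we may assume $V_0=V_0'=0$. Put $v_i=V_i$ and $v_i'=V_i'$ for $1\le i\le k$; because $P$ and $P'$ are simplices, $(v_1,\dots,v_k)$ and $(v_1',\dots,v_k')$ are each linearly independent, spanning $k$-dimensional subspaces $W,W'\subseteq\mathbb{F}_q^d$. Writing $x\cdot y=\sum_\ell x_\ell y_\ell$ for the standard bilinear form, one has $\|x-y\|=\|x\|-2\,(x\cdot y)+\|y\|$, so using \eqref{equalnorm} (in particular the cases $j=0$, which give $\|v_i\|=\|v_i'\|$) together with the invertibility of $2$ (here $q$ odd is essential) one obtains
\[
v_i\cdot v_j=\tfrac12\bigl(\|v_i\|+\|v_j\|-\|v_i-v_j\|\bigr)=v_i'\cdot v_j',\qquad 1\le i,j\le k.
\]
Thus the two tuples have identical Gram matrices, so the linear map $\sigma\colon W\to W'$ determined by $\sigma(v_i)=v_i'$ is a well-defined isomorphism (both families are bases of their spans) that preserves the bilinear form, i.e.\ an isometry of the (possibly degenerate) subspaces $W$ and $W'$ of the nondegenerate space $(\mathbb{F}_q^d,\|\cdot\|)$.

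Now Witt's extension theorem provides $O\in O_d(\mathbb{F}_q)$ with $O|_W=\sigma$; then $O(0)=0=V_0'$ and $O(v_i)=v_i'$ for every $i$, so $O$ carries the translated $P$ onto the translated $P'$, and undoing the translation yields $\tau+O(P)=P'$ with $\tau=V_0'-O(V_0)$. If $\det O=1$ we are done; if $\det O=-1$ I would post-compose $O$ with some $R\in O_d(\mathbb{F}_q)$ satisfying $\det R=-1$ and $R|_{W'}=\mathrm{id}$, so that $RO$ still sends $P$ to $P'$ and now lies in $SO_d(\mathbb{F}_q)$. Producing such an $R$ is the step I expect to require the most care: in the regime relevant to the applications one has $d>k$ (indeed $d>\binom{k+1}{2}$ in the setting of \cite{HI07}, and $k\le d-1$ for simplices lying inside a sphere), so if $\|\cdot\|$ is nondegenerate on $W'$ one splits $\mathbb{F}_q^d=W'\perp {W'}^{\perp}$ and takes $R$ to be a reflection in a nonzero-norm vector of the positive-dimensional nondegenerate space ${W'}^{\perp}$, while if $\|\cdot\|$ degenerates on $W'$ one first invokes Witt's theorem again to move the radical of $W'$ into a standard hyperbolic block and finds the reflection there. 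This last case is in any event vacuous for the simplices with nonzero distances that the paper actually tracks, and apart from this bookkeeping the argument is just the polarization identity together with the extension theorem.
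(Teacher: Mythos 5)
The paper itself gives no proof of this lemma --- it is quoted from \cite{HI07} --- so there is no argument of the authors to compare against step by step. Your outline (translate so that $V_0=V_0'=0$, polarize $\|x-y\|=\|x\|-2x\cdot y+\|y\|$ to convert equality of norms into equality of Gram matrices, extend the resulting isometry $\sigma\colon W\to W'$ by Witt's extension theorem, then try to repair the determinant) is the standard route and is surely the intended one. The polarization and Witt steps are correct as written: $2$ is invertible since $q$ is odd, both $k$-tuples are bases of their spans, and Witt's extension theorem does apply to isometries between possibly degenerate subspaces of the nondegenerate space $({\Bbb F}_q^d,\|\cdot\|)$.

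The step that does not close is the determinant correction, and the obstruction there is real, not bookkeeping. An $R\in O_d({\Bbb F}_q)$ with $\det R=-1$ fixing $W'$ pointwise exists exactly when $W'^{\perp}$ contains an anisotropic vector, i.e.\ when $W'^{\perp}$ is not totally isotropic. If $W'^{\perp}$ \emph{is} totally isotropic (equivalently $W'^{\perp}\subseteq W'$; this includes the permitted case $d=k$, where $W'^{\perp}=\{0\}$), then any isometry $g$ fixing $W'$ pointwise satisfies $(g-I)V\subseteq W'^{\perp}\subseteq W'=\ker(g-I)$, so $g$ is unipotent and $\det g=1$: the pointwise stabilizer of $W'$ lies in $SO_d$, the sign of the Witt extension cannot be changed, and the lemma as stated with $SO_d$ is actually false for those $\sigma$ whose extensions have determinant $-1$. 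A concrete instance is $d=k=2$, $q=7$, $P=((0,0),(1,0),(0,2))$, $P'=((0,0),(1,0),(0,-2))$: the mutual norms $1,4,5$ agree and are distinct, the linear part is forced to be $\mathrm{diag}(1,-1)$, and $-1$ is a nonsquare mod $7$, so no element of $SO_2({\Bbb F}_7)$ works. Your remark that the degenerate case is vacuous for simplices with nonzero distances is also not quite right: $W'$ can be degenerate with all $\|V_i'-V_j'\|\neq 0$ (take $\|v_1'\|=\|v_2'\|=1$ and $v_1'\cdot v_2'=-1$). In short, your argument is complete precisely when $\dim W'^{\perp}>\dim(W'\cap W'^{\perp})$; outside that range the statement needs $O_d$ in place of $SO_d$ (consistent with the paper's own gloss of $SO_d$ as ``the set of $d$-by-$d$ orthogonal matrices'') or a nondegeneracy hypothesis on the simplex.
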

 
 In this paper the authors will specify simplices by specifying the distances determining them piece by piece.  With this in mind denote a $k$-star by
 $$S_k(t_1,\dots,t_k)=\{(x,y^1\dots y^k): \|x-y^1\|=t_1,\dots \|x-y^k\|=t_k\},$$
 where $t_1,\dots, t_k \in \mathbb F_q$.
 
 Define
$\Delta_{y^1, y^2,\dots,y^{k}}(E)=\{(\|x-y^1\|, \dots,\|x-
y^{k}\|)\in {\mathbb F}_q^k:x \in E\}$
where $y^1$, $y^2$,\dots,$y^k \in E$.  
We have the following result. 
   \begin{theorem} \label{distmultproj}   Let $E \subset {\Bbb F}_q^d$. 
  If $|E| \gtrsim q^{\frac{d+k}{2}}$ then
$$\frac{1}{|E|^{k}}\sum_{y^1,\dots,y^{k}\in E}|\Delta_{y^1,\dots,y^k}(E)|\gtrsim q^{k}.$$
\end {theorem}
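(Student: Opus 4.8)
The plan is to run a standard Fourier-analytic second-moment argument, but carried out simultaneously over all $k$ pins.  First I would write, for fixed pins $y^1,\dots,y^k$ and a target $(t_1,\dots,t_k)\in\mathbb F_q^k$, the counting function
$$\nu(t_1,\dots,t_k)=\#\{x\in E:\ \|x-y^1\|=t_1,\dots,\|x-y^k\|=t_k\},$$
so that $|\Delta_{y^1,\dots,y^k}(E)|$ is the number of $(t_1,\dots,t_k)$ with $\nu>0$.  Since $\sum_{t_1,\dots,t_k}\nu(t_1,\dots,t_k)=|E|$, by Cauchy--Schwarz
$$|\Delta_{y^1,\dots,y^k}(E)|\ \ge\ \frac{|E|^2}{\sum_{t_1,\dots,t_k}\nu(t_1,\dots,t_k)^2}.$$
Summing over all pins $y^1,\dots,y^k\in E$ and applying Cauchy--Schwarz once more in the pin variables, it suffices to show
$$\sum_{y^1,\dots,y^k\in E}\ \sum_{t_1,\dots,t_k}\nu_{y^1,\dots,y^k}(t_1,\dots,t_k)^2\ \lesssim\ \frac{|E|^{k+2}}{q^k}.$$
The left side counts, with $x,x'$ ranging over $E$ and $y^1,\dots,y^k$ over $E$, the configurations with $\|x-y^j\|=\|x'-y^j\|$ for every $j$; call this quantity $\Lambda$.

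Next I would expand $\Lambda$ using the Fourier analysis on $\mathbb F_q^d$.  The key elementary identity is that for fixed $x,x'$, the number of $y\in E$ with $\|x-y\|=\|x'-y\|$ equals, after opening the square in $\|x-y\|-\|x'-y\|=2y\cdot(x'-x)+\|x\|-\|x'\|$, a linear condition in $y$; so it is $|E|/q$ plus a Fourier error term involving $\widehat{E}$ evaluated along the line determined by $x-x'$.  Because the $k$ pin-conditions are independent, $\Lambda$ factors (over the choice of each $y^j$) into a product, giving a main term $|E|^2\cdot(|E|/q)^k$ exactly of the desired size $|E|^{k+2}/q^k$, and error terms that are controlled by sums of the shape $\sum_{x\neq x'}$ (product of $k$ Fourier factors along the direction $x-x'$).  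The crucial point is that each individual factor is governed by the Gauss-sum / Salié-sum bound for the Fourier transform of a sphere, or more directly by the estimate from \cite{IR07} that under $|E|\gtrsim q^{(d+1)/2}$ (here $\gtrsim q^{(d+k)/2}$, which is stronger for $k\ge1$) the sphere-restricted $L^2$ mass $\sigma_E(t)$ is comparable to its average; one then bootstraps the bound $\sum_{t\neq0}\sigma_E(t)^2\lesssim |E|^4 q^{-(3d+1)}\cdot q$ hidden inside $\mathrm M_E(q)\lesssim1$.

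Concretely I would bound the error contribution to $\Lambda$ by first handling the ``diagonal'' pieces where one or more of the $k$ slots contributes its full main term $|E|/q$ and the rest contribute Fourier errors: each such piece is at most $(|E|/q)^{k-j}$ times a $j$-fold error sum, and the worst case $j=k$ reduces to estimating
$$\sum_{x,x'\in E}\Big(\ \Big|\#\{y\in E:\|x-y\|=\|x'-y\|\}-\tfrac{|E|}{q}\Big|\ \Big)^{?}$$
in a way that, after an application of Plancherel and the $\sigma_E$ bound, yields a gain of exactly $q^{-k}$ relative to $|E|^{k+2}$ when $|E|\ge q^{(d+k)/2}$.  The bookkeeping of the $2^k$ cross terms is routine; the substantive estimate is the single-pin one, for which I would cite Theorem \ref{mattila} and the explicit decay of $|\widehat E(m)|$ on spheres used in \cite{IR07,HIKR07}.

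I expect the main obstacle to be the careful separation of the main term from the error in the multi-pin expansion: one must verify that the ``all slots give main term'' contribution is $(1+o(1))|E|^{k+2}q^{-k}$ and, more delicately, that the mixed terms (some slots main, some slots error) genuinely carry a $q^{-1}$ saving per error slot rather than only $q^{-1/2}$, since a naive triangle-inequality estimate of $|\widehat E|$ on a sphere only gives square-root cancellation.  The resolution is to keep the error slots coupled through the common direction $x-x'$ and sum in $x,x'$ last, using $\sum_{t}\sigma_E(t)^2$ (equivalently $\mathrm M_E(q)$) as a single aggregate bound rather than estimating each direction separately; this is exactly the mechanism that makes the exponent $\frac{d+k}{2}$ come out, and it is the one place where the hypothesis $|E|\gtrsim q^{(d+k)/2}$ (as opposed to $q^{(d+1)/2}$) is actually used.
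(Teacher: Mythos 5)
Your setup is the right one and matches the paper's opening moves: the counting function $\nu_{y^1,\dots,y^k}$, the Cauchy--Schwarz reduction to the second moment $\Lambda=\sum_{y^1,\dots,y^k\in E}\sum_{t_1,\dots,t_k}\nu^2$, and the identification of the main term $|E|^{k+2}/q^k$. Your observation that the $k$ pin conditions decouple, so that $\Lambda=\sum_{x,x'\in E}N(x,x')^k$ with $N(x,x')=\#\{y\in E:\|x-y\|=\|x'-y\|\}$, is also correct and could be made to work. The genuine gap is in the mechanism you propose for the error terms. The condition $\|x-y\|=\|x'-y\|$ linearizes to $2y\cdot(x'-x)=\|x'\|-\|x\|$, a \emph{hyperplane} condition in $y$; the Fourier data that enters is $\widehat{E}$ along the line spanned by $x-x'$, and the whole error is handled by \emph{exact orthogonality}: extend the sum over $y\in E$ to $y\in\mathbb{F}_q^d$, and orthogonality in $y$ forces $x=x'$, leaving a diagonal contribution of size at most $q^d|E|^k$. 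None of the sphere machinery you reach for --- the Gauss/Sali\'e decay of $\widehat{S_t}$, the sphere-restricted mass $\sigma_E(t)$, or $\operatorname M_E(q)$ from Theorem \ref{mattila} --- is connected to $\Lambda$: those objects arise when one Fourier-expands $\|x-y\|=t$ with both endpoints free (as in Lemma \ref{main}), not when $x,x'$ are fixed and only the pin varies. In particular your proposed fix for the ``$q^{-1}$ versus $q^{-1/2}$ per slot'' problem --- summing $x,x'$ last against $\sum_t\sigma_E^2(t)$ --- does not exist at the required strength outside $d=2$ (the only place the paper controls such a quantity is Lemma \ref{restriction}, via a planar $L^2\to L^4$ extension estimate, and a $k$-fold version would need $L^{2k}$ bounds); and it is not needed, because the linearity of the pin condition gives full, not square-root, cancellation.

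Two further corrections to the bookkeeping. First, it does not suffice, and is not true for general $E$, that $\Lambda\lesssim|E|^{k+2}/q^k$; the correct statement (Lemma \ref{B}) is $\Lambda\lesssim|E|^{k+2}/q^k+q^d|E|^k$ for \emph{every} $E$, with the diagonal term $q^d|E|^k$ coming from the $x=x'$ contribution. The hypothesis $|E|\gtrsim q^{(d+k)/2}$ is used only at the very end, to absorb $q^d|E|^k$ into $|E|^{k+2}/q^k$ --- that comparison, not any $\sigma_E$ estimate, is where the exponent $\frac{d+k}{2}$ comes from. Second, the paper avoids your $2^k$-term cross expansion entirely by inducting on the number of pins: each new condition $\|x-y^k\|=\|x'-y^k\|$ is opened with a single character sum in $s$, the $s=0$ term reproduces $q^{-1}|E|$ times the $(k-1)$-pin second moment, and the $s\neq 0$ remainder is $\leq q^d|E|^k$ by orthogonality in $y^k$. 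If you insist on the product expansion, it can be closed using only $\|N-|E|/q\|_{\infty}\leq|E|$ off the diagonal together with orthogonality bounds on the low moments of the hyperplane discrepancy; but as written, the step you defer to --- ``cite Theorem \ref{mattila} and the decay of $|\widehat{E}(m)|$ on spheres'' --- would not close the argument.
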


An  pigeon-holing argument using Theorem \ref{distmultproj} will allow us to move from sets of $k$-stars to sets of $k$-simplices.

\begin{theorem}\label{kpoint}
Let $E\subset \mathbb F_q^d$.  If $|E|\gtrsim q^{\frac{d+k}{2}},k\leq d$ then $|\mathcal{T}_k(E)|\gtrsim q^{k+1 \choose 2}$, in other words $E$ determines a positive proportion of all $k$-simplices.
\end{theorem}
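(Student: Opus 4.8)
The plan is to bootstrap from Theorem \ref{distmultproj} by a pigeonholing argument, building up the $k$-simplex one vertex at a time so that at each stage the relevant tuple of edge-distances is ``generic,'' i.e. lies in a set of near-maximal size. Fix $E$ with $|E|\gtrsim q^{\frac{d+k}{2}}$; note that then automatically $|E|\gtrsim q^{\frac{d+j}{2}}$ for every $1\le j\le k$, so Theorem \ref{distmultproj} applies with $k$ replaced by any smaller $j$. By Lemma \ref{uptocrap} a congruence class of $k$-simplices is determined by the $\binom{k+1}{2}$ pairwise distances of its vertices, so it suffices to show that the number of distinct distance-vectors $\big(\|V_i-V_j\|\big)_{0\le i<j\le k}$ realized by ordered $(k+1)$-tuples of points of $E$ is $\gtrsim q^{\binom{k+1}{2}}$. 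Equivalently, writing the vertices as $V_0=x$ and $V_j=y^j$, I want to count the distinct values of the whole profile $\big(\|x-y^j\|\big)_{1\le j\le k}$ together with $\big(\|y^i-y^j\|\big)_{1\le i<j\le k}$.

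First I would record the ``averaged'' form of Theorem \ref{distmultproj}: since $\sum_{y^1,\dots,y^k\in E}|\Delta_{y^1,\dots,y^k}(E)|\gtrsim |E|^k q^k$ and trivially $|\Delta_{y^1,\dots,y^k}(E)|\le q^k$, a standard Markov/popularity argument produces a set of $k$-tuples $(y^1,\dots,y^k)\in E^k$, of size $\gtrsim |E|^k$, each of which satisfies $|\Delta_{y^1,\dots,y^k}(E)|\gtrsim q^k$. I would then iterate this: choose $y^1$ so that $|\Delta_{y^1}(E)|\gtrsim q$ (possible for $\gtrsim|E|$ choices of $y^1$, by the $k=1$ case, which also follows from Theorem \ref{distpinned}); then, restricting to this good $y^1$, choose $y^2$ so that the pair $(\|x-y^1\|,\|x-y^2\|,\|y^1-y^2\|)$ ranges over $\gtrsim q^3$ values — here I apply the $k=2$ case of Theorem \ref{distmultproj} to the pair $(y^1,y^2)$ to get $\gtrsim q^2$ values of $(\|x-y^1\|,\|x-y^2\|)$, and I still need the $\|y^1-y^2\|$ coordinate, of which there are at most $q$; to gain the full extra factor of $q$ I instead run Theorem \ref{distmultproj} with the pin-set enlarged, or simply observe that for $\gtrsim q$ choices of $y^2$ the value $\|y^1-y^2\|$ is ``new.'' Continuing, at stage $j$ I hold fixed a good partial simplex $y^1,\dots,y^{j-1}$ and use Theorem \ref{distmultproj} (for the index $j$) applied with pins $y^1,\dots,y^j$ to control the $j$ new distances $\|x-y^i\|$, while the $j-1$ new distances $\|y^i-y^j\|$, $i<j$, contribute the remaining factors; the count of good $(k+1)$-tuples multiplies out to $\gtrsim q^{k+(k-1)+\cdots+1}=q^{\binom{k+1}{2}}$ distinct distance profiles, hence $|\mathcal T_k(E)|\gtrsim q^{\binom{k+1}{2}}$.

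The main obstacle, and the place where the argument needs real care rather than bookkeeping, is exactly this interleaving of the ``spoke'' distances $\|x-y^j\|$ (which Theorem \ref{distmultproj} controls for the variable point $x$) with the ``chord'' distances $\|y^i-y^j\|$ among the pins themselves (which Theorem \ref{distmultproj} says nothing about directly). One must arrange the induction so that the pins are chosen from a large popular set at each step and so that the chord distances among them genuinely take $\gtrsim q$ new values at each new pin — this is really a second application of the pinned-distance estimate, now with the pin ranging over the (still large) current good set and the ``other point'' being the new pin $y^j$. I would set up a single induction whose hypothesis is: there are $\gtrsim |E|^{j}$ ordered $j$-tuples $(y^1,\dots,y^j)\in E^j$ such that the partial profile of all distances among $\{x,y^1,\dots,y^j\}$, as $x$ ranges over $E$, realizes $\gtrsim q^{\binom{j+1}{2}}$ values; the base case $j=1$ is Theorem \ref{distpinned}, and the inductive step combines Theorem \ref{distmultproj} for the spokes with a pinned-distance count for the new chords, losing only absolute constants at each of the finitely many ($k\le d$) steps. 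Finally, Lemma \ref{uptocrap} converts the lower bound on distinct distance profiles into the claimed lower bound on $|\mathcal T_k(E)|$, completing the proof.
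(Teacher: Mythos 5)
Your overall architecture --- pigeonhole Theorem \ref{distmultproj} to get a dense set $\mathcal{E}\subset E^k$ of popular bases, then recurse on the base to handle the chord distances among the pins, multiplying $q^k\cdot q^{k-1}\cdots q^1=q^{\binom{k+1}{2}}$ --- is exactly the paper's, and you correctly identify the interleaving of spoke and chord distances as the crux. The gap is that you never actually supply the estimate that makes the recursion run. After the first pigeonholing, the good set $\mathcal{E}$ of $k$-tuples is a positive-density subset of $E^k$ with no product structure, so at the next stage you must count the distance vectors $\left(\|y^k-y^1\|,\dots,\|y^k-y^{k-1}\|\right)$ as $y^k$ ranges over the fiber $\mathcal{E}(y^1,\dots,y^{k-1})=\{y^k:(y^1,\dots,y^k)\in\mathcal{E}\}$, a tuple-dependent subset of $E$. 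Neither Theorem \ref{distmultproj} nor Theorem \ref{distpinned} applies as stated: both are proved with the star center ranging over all of $E$ (the second-moment bound of Lemma \ref{B} sums over all of $E^{k+1}$), and your phrase ``a pinned-distance count for the new chords\dots with the pin ranging over the current good set'' is precisely the statement that needs proof, not a citation.

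The fix is short but it is the one real idea beyond bookkeeping, and it is what the paper isolates as Theorem \ref{Alexthm} and Corollary \ref{winner}: for $\mathcal{E}\subset E^s$ with $|\mathcal{E}|\sim|E|^s$, the restricted counting function obeys the pointwise bound $\nu^{\mathcal{E}(y^1,\dots,y^{s-1})}_{y^1,\dots,y^{s-1}}\le \nu_{y^1,\dots,y^{s-1}}$, so its second moment is still controlled by Lemma \ref{B}, while its first moment is $|\mathcal{E}|\sim|E|^s$; Cauchy--Schwarz then gives $\frac{1}{|\mathcal{E}'|}\sum\left|\Delta_{y^1,\dots,y^{s-1}}\bigl(\mathcal{E}(y^1,\dots,y^{s-1})\bigr)\right|\gtrsim q^{s-1}$ for $|E|\gtrsim q^{(d+s-1)/2}$, and a second pigeonholing produces the next dense good set $\mathcal{E}^{(1)}\subset E^{s-1}$ on which to iterate. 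With that lemma inserted your induction closes; note the paper runs it top-down, peeling one pin off the $k$-star at a time, which keeps the successive good sets nested automatically, whereas your bottom-up version needs the same lemma plus the additional check that each stage's good tuples extend to the next stage's. Two smaller points: your per-tuple induction hypothesis as literally stated (``the profile of all distances among $\{x,y^1,\dots,y^j\}$ realizes $\gtrsim q^{\binom{j+1}{2}}$ values'') cannot hold for a fixed tuple, since only the $j$ spoke coordinates vary with $x$, so it must be read as a union over the good tuples; and the aside that ``for $\gtrsim q$ choices of $y^2$ the value $\|y^1-y^2\|$ is new'' is not a substitute for the fiber estimate, since you need those choices of $y^2$ to remain inside the good set for the subsequent stages.
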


Similarly, define
$\Pi_{y^1, y^2,\dots,y^{k}}(E)=\{(x \cdot y^1, x \cdot y^2,\dots,x \cdot y^{k})\in {\mathbb F}_q^{k}:x \in E\}$
where $y^1$, $y^2$,\dots,$y^{k} \in E$.  
 Then we have the following result.
 \begin{theorem} \label{dotmultproj}   Let $E \subset {\Bbb F}_q^d$. 
  If $|E| \gtrsim q^{\frac{d+k}{2}}$ then
$$\frac{1}{|E|^{k}}\sum_{y^1,\dots,y^{k}\in E}|\Pi_{y^1,\dots,y^{k}}(E)|\gtrsim q^{k}.$$
\end {theorem}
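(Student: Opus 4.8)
The plan is to follow the scheme behind Theorems~\ref{dotpinned} and \ref{distmultproj}: bound each $|\Pi_{y^1,\dots,y^k}(E)|$ from below by a Cauchy--Schwarz $L^2$ estimate, then average with a second Cauchy--Schwarz. Fix pins $y^1,\dots,y^k\in E$ and, for $t=(t_1,\dots,t_k)\in{\mathbb F}_q^k$, put
$$\mu_{y^1,\dots,y^k}(t)=\bigl|\{x\in E:\ x\cdot y^j=t_j,\ j=1,\dots,k\}\bigr|.$$
Then $\sum_{t}\mu_{y^1,\dots,y^k}(t)=|E|$ and $\mu_{y^1,\dots,y^k}$ is supported on $\Pi_{y^1,\dots,y^k}(E)$, so Cauchy--Schwarz gives
$$|\Pi_{y^1,\dots,y^k}(E)|\ \ge\ \frac{|E|^2}{\sum_{t}\mu_{y^1,\dots,y^k}(t)^2}.$$
Opening each constraint $x\cdot y^j=t_j$ as $\frac1q\sum_{s_j\in{\mathbb F}_q}\chi\bigl(s_j(x\cdot y^j-t_j)\bigr)$ with $\chi$ a nontrivial additive character of ${\mathbb F}_q$, and summing over $t$ by orthogonality, one gets the identity
$$\sum_{t}\mu_{y^1,\dots,y^k}(t)^2\ =\ \frac1{q^k}\sum_{s\in{\mathbb F}_q^k}\bigl|\widehat E(s_1y^1+\dots+s_ky^k)\bigr|^2,$$
where $\widehat E(m)=\sum_{x\in E}\chi(x\cdot m)$, so $\widehat E(0)=|E|$ and $\sum_{m\in{\mathbb F}_q^d}|\widehat E(m)|^2=q^d|E|$ by Plancherel.

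Next I would average. Writing $S(y^1,\dots,y^k)=\sum_t\mu_{y^1,\dots,y^k}(t)^2$ and applying Cauchy--Schwarz a second time over the $|E|^k$ tuples of pins, one has $\sum_{y^1,\dots,y^k}S(y^1,\dots,y^k)^{-1}\ge|E|^{2k}/\sum_{y^1,\dots,y^k}S(y^1,\dots,y^k)$, hence
$$\frac1{|E|^k}\sum_{y^1,\dots,y^k\in E}|\Pi_{y^1,\dots,y^k}(E)|\ \ge\ \frac{|E|^{k+2}}{\displaystyle\sum_{y^1,\dots,y^k\in E}S(y^1,\dots,y^k)}.$$
So it suffices to show $\sum_{y^1,\dots,y^k\in E}S(y^1,\dots,y^k)\lesssim q^{-k}|E|^{k+2}$. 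By the identity above, the term $s=0$ contributes exactly $q^{-k}|E|^{k+2}$, which is precisely the target, so the whole problem reduces to the single character-sum estimate
$$\sum_{y^1,\dots,y^k\in E}\ \sum_{s\in{\mathbb F}_q^k\setminus\{0\}}\bigl|\widehat E(s_1y^1+\dots+s_ky^k)\bigr|^2\ \lesssim\ q^{d+k}|E|^k.$$

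To prove this I would cover $\{s\neq0\}$ by the $k$ sets $\{s_j\neq0\}$ and, by symmetry among the coordinates, estimate only the contribution of $\{s_1\neq0\}$. Freeze $s_1\in{\mathbb F}_q^\ast$, the remaining $s_2,\dots,s_k\in{\mathbb F}_q$, and the pins $y^2,\dots,y^k\in E$; since $s_1\neq0$, the map $y^1\mapsto s_1y^1+c$ with $c=s_2y^2+\dots+s_ky^k$ is injective on $E$, so
$$\sum_{y^1\in E}\bigl|\widehat E(s_1y^1+c)\bigr|^2\ =\ \sum_{m\in s_1E+c}|\widehat E(m)|^2\ \le\ \sum_{m\in{\mathbb F}_q^d}|\widehat E(m)|^2\ =\ q^d|E|.$$
Multiplying by the number of admissible $(s_1;s_2,\dots,s_k;y^2,\dots,y^k)$, which is at most $q\cdot q^{k-1}\cdot|E|^{k-1}$, bounds the $\{s_1\neq0\}$ piece by $q^{d+k}|E|^k$ and hence the full sum by $k\,q^{d+k}|E|^k$. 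Therefore $\sum_{y^1,\dots,y^k}S(y^1,\dots,y^k)\le q^{-k}|E|^{k+2}+k\,q^d|E|^k$, and the hypothesis $|E|\gtrsim q^{(d+k)/2}$, i.e. $|E|^2\gtrsim q^{d+k}$, forces $k\,q^d|E|^k\lesssim q^{-k}|E|^{k+2}$; the error is absorbed into the main term and we conclude $\frac1{|E|^k}\sum_{y^1,\dots,y^k}|\Pi_{y^1,\dots,y^k}(E)|\gtrsim q^k$.

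The argument is short, and the only point needing care is bookkeeping: the implied constant in $|E|\gtrsim q^{(d+k)/2}$ must be taken large enough (depending on $k$) so that the off-diagonal term $k\,q^d|E|^k$ is genuinely dominated by $q^{-k}|E|^{k+2}$, and one should note that no further hypothesis (such as $k\le d$) is needed beyond $|E|^2\gtrsim q^{d+k}$. Theorem~\ref{distmultproj} is expected to follow by the very same three steps with $x\cdot y^j=t_j$ replaced by $\|x-y^j\|=t_j$: expanding with additive characters now yields Gauss-sum factors of modulus $q^{d/2}$ alongside $\widehat E$ evaluated at a linear form in the pins, but the structure — a main term from $s=0$ plus an off-diagonal piece controlled by Plancherel after freezing all pins but one — is identical, which is what makes $\tfrac{d+k}{2}$ the common exponent.
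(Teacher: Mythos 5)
Your proof is correct, and while it shares the paper's top-level skeleton, it proves the key lemma by a genuinely different and arguably cleaner route. In both arguments Cauchy--Schwarz reduces everything to the second-moment bound $\sum_{y^1,\dots,y^k\in E}\sum_{t}\mu_{y^1,\dots,y^k}(t)^2\lesssim q^{-k}|E|^{k+2}+q^{d}|E|^{k}$, which is exactly Lemma \ref{A}, and your Fourier identity for $\sum_t\mu(t)^2$ is the same as the paper's relation $\widehat{\eta}_{y^1,\dots,y^k}(t)=q^{d-k}\widehat{E}(t_1y^1+\cdots+t_ky^k)$ followed by Plancherel in $t$. The difference is in how that bound is established. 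The paper proceeds by induction on $k$ (base case from the proof of Theorem \ref{dotpinned}): it isolates $t_k=0$ for the inductive hypothesis and, for $t_k\neq 0$, changes variables in $y^k$ to reach $\sum_m|\widehat{E}(m)|^2\,|E\cap H_{y^1,\dots,y^{k-1},m}|$, controlled by the trivial bound on the intersection of $E$ with a subspace spanned by at most $k$ vectors (a step whose write-up is in any case slightly garbled, stating $|E\cap H|\le q^{k-1}$ while the displayed computation uses $q^{k}$). You instead handle all pins at once: cover $\{s\neq 0\}$ by $\bigcup_{j}\{s_j\neq 0\}$ and, on the piece $s_j\neq 0$, sum over the single pin $y^j$ using injectivity of $y\mapsto s_jy+c$ together with Plancherel, then multiply by the trivial count of the frozen parameters. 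This avoids both the induction and the subspace-intersection count, costs only a harmless factor of $k$, and makes transparent that nothing beyond $|E|^2\gtrsim q^{d+k}$ is used. One small remark: since the conclusion carries its own implicit constant, you do not actually need a large constant in the hypothesis --- $|E|^2\ge q^{d+k}$ already gives $\sum_{y^1,\dots,y^k}S\le (1+k)q^{-k}|E|^{k+2}$ and hence an average of at least $q^{k}/(1+k)\gtrsim q^{k}$.
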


If $E$ is subset of a sphere $S$ where $S=\{x\in \mathbb F_q^d:\|x\|=1\}$ then one has for $x,y \in E$ that $\|x-y\|=2-2x\cdot y$.  Therefore in this case determining distances is the same as determining dot products.
Under this assumption on $E$ the proof of Theorem \ref{dotmultproj} may be modified improving the exponent in Theorem \ref{distmultproj}.
 \begin{theorem} \label{distmultprojsphere}   Let $E \subset S$. 
  If $|E| \gtrsim q^{\frac{d+k-1}{2}}$ then
$$\frac{1}{|E|^{k}}\sum_{y^1,\dots,y^{k}\in E}|\Delta_{y^1,\dots,y^k}(E)|\gtrsim q^{k}.$$
\end {theorem}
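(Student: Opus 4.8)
The plan is to mirror the proof of Theorem~\ref{dotmultproj}, but to exploit the sphere hypothesis to save one power of $q$ in the key exponential-sum estimate. Since $E \subset S$, for $x, y \in E$ we have $\|x-y\| = 2 - 2 x\cdot y$, so the map $y \mapsto \Delta_{y^1,\dots,y^k}(E)$ differs from $y \mapsto \Pi_{y^1,\dots,y^k}(E)$ only by an invertible affine change of coordinates in each slot; hence $|\Delta_{y^1,\dots,y^k}(E)| = |\Pi_{y^1,\dots,y^k}(E)|$ whenever all the pins $y^1,\dots,y^k$ lie on $S$. Thus it suffices to prove the dot-product version of the conclusion for $E \subset S$ with the improved hypothesis $|E| \gtrsim q^{(d+k-1)/2}$.

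First I would set up the counting function. For fixed pins $y^1,\dots,y^k \in E$ and a target vector $(t_1,\dots,t_k) \in \mathbb F_q^k$, let
\[
\nu(t_1,\dots,t_k) = \#\{x \in E : x\cdot y^1 = t_1,\dots,x\cdot y^k = t_k\},
\]
so that $\sum_{t} \nu(t) = |E|$ and, by Cauchy--Schwarz, $|\Pi_{y^1,\dots,y^k}(E)| \ge |E|^2 / \sum_t \nu(t)^2$. Summing in the pins, it then suffices to show
\[
\sum_{y^1,\dots,y^k \in E} \ \sum_{t \in \mathbb F_q^k} \nu(t)^2 \lesssim \frac{|E|^{k+2}}{q^k},
\]
since then Cauchy--Schwarz again (over the pins) gives the claimed lower bound $q^k$ on the average of $|\Pi_{y^1,\dots,y^k}(E)|$. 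Expanding $\nu(t)^2$ and summing over $t$ with orthogonality of additive characters, the left side becomes $q^{-k}$ times a sum over $x, x' \in E$ and $y^1,\dots,y^k \in E$ of $\prod_{j=1}^k \sum_{s_j \in \mathbb F_q} \chi(s_j (x-x')\cdot y^j)$, which after collapsing the $s_j$-sums reduces to counting, for each $j$ independently, pairs with $(x-x')\cdot y^j = 0$. So the whole estimate factors and one is reduced to the single-slot bound
\[
\sum_{x,x' \in E} \ \#\{y \in E : (x-x')\cdot y = 0\} \lesssim \frac{|E|^3}{q} + (\text{error}),
\]
and the point is that the error term must be $\lesssim |E|^3/q$ under the weaker hypothesis $|E| \gtrsim q^{(d+k-1)/2}$, i.e. $|E| \gtrsim q^{(d-1)/2}$ when $k=1$ suffices to control one slot.

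The main work, and the place the sphere hypothesis pays off, is this single-slot estimate. Write $\#\{y \in E : (x-x')\cdot y = 0\}$ via characters as $q^{-1}\sum_{s} \sum_{y \in E} \chi(s(x-x')\cdot y) = q^{-1} \sum_{y\in E} 1 \cdot [\,x=x'\text{ term}\,] + q^{-1}\sum_{s\ne 0}\widehat{E}(s(x-x'))\cdot(\dots)$; more precisely one arrives at a sum of the shape $q^{d-1}\sum_{m} |\widehat E(m)|^2 \#\{(x,x')\in E\times E : x - x' \parallel m\}$ or, after Fourier-expanding everything, at an expression governed by $\sum_{m \ne 0} |\widehat E(m)|^2 \cdot (\text{number of } x,x'\in E \text{ with } x-x' \text{ in the line } \mathbb F_q m)$. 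On the sphere, the crucial geometric input is that a line $\mathbb F_q m + x$ meets $S$ in at most two points (a line meets a quadric in $\le 2$ points unless it lies on it, and a nondegenerate sphere contains no lines when we are in the relevant range), so $\#\{(x,x') \in E\times E : x-x'\in \mathbb F_q m\} \le 2|E|$ for each nonzero direction $m$ — this is exactly the saving of a factor of $q$ over the generic bound $|E|^2/q + \dots$ used in Theorem~\ref{dotmultproj}. Feeding $\#\{\dots\} \le 2|E|$ back in and using $\sum_m |\widehat E(m)|^2 = q^{-d}|E|$ together with the trivial bound on the $m=0$ term yields the single-slot estimate with total $\lesssim |E|^3/q$ provided $|E| \gtrsim q^{(d-1)/2}$ — precisely the $k=1$ instance of our hypothesis. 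Raising to the $k$-th power over the $k$ independent slots and reassembling through the two Cauchy--Schwarz steps completes the proof.

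The step I expect to be the main obstacle is making the "line meets the sphere in at most two points" input interact correctly with the Fourier side: one has to be careful that after expanding $\nu(t)^2$ and the $k$ dot-product constraints in characters, the resulting object genuinely decouples into $k$ copies of a one-slot line-incidence count on $S$, rather than a coupled count of $k$-planes through pairs of points of $E$. This requires organizing the character-sum bookkeeping so that each frequency variable $s_j$ is tied to its own pin $y^j$ and the spatial variables $x, x'$ are shared; once the decoupling is in place, the geometric estimate and the Plancherel bound $\sum_m |\widehat E(m)|^2 = q^{-d}|E|$ are routine, and the arithmetic of comparing $|E|^{k+2} q^{-k}$ against $|E|^k \cdot (\text{one-slot bound})^{?}$ — more precisely, tracking that the product of $k$ one-slot bounds $(|E|^3/q)^{?}$ is compatible with the single threshold $q^{(d+k-1)/2}$ rather than $k$ separate thresholds — is the last bit of care needed.
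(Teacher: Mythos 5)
Your opening reduction (on the sphere $\|x-y\|=2-2x\cdot y$, so pinned distances and pinned dot products coincide up to an affine bijection of $\mathbb F_q^k$) is exactly the paper's starting point, and you have correctly located where the sphere hypothesis must enter: a geometric incidence bound that saves one power of $q$ in the second-moment estimate. But the core of your argument does not go through as described. After orthogonality, the quantity you must bound is
$$\sum_{y^1,\dots,y^k\in E}\sum_{t\in\mathbb F_q^k}\nu(t)^2=\sum_{x,x'\in E}N(x-x')^k,\qquad N(v)=\#\{y\in E:\ v\cdot y=0\},$$
a $k$-th moment of $N$. Your single-slot estimate controls only the first moment $\sum_{x,x'}N(x-x')\lesssim |E|^3/q$, and "raising to the $k$-th power over the $k$ independent slots" is not a valid step: a first-moment bound does not control a $k$-th moment (if $N(x-x')\sim|E|$ on a $1/q$-fraction of pairs, the first moment is $|E|^3/q$ but the $k$-th moment is $|E|^{k+2}/q$, far larger than the needed $|E|^{k+2}/q^k$). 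Equivalently, on the Fourier side the expression becomes $q^{2d-k}\sum_m|\widehat E(m)|^2\, r_k(m)$ where $r_k(m)$ counts representations $m=\sum_j s_jy^j$ with $y^j\in E$; this is precisely the "coupled count of $k$-planes through points of $E$" that you flagged as the danger, and it genuinely does not decouple into $k$ independent line-incidence counts. (A smaller slip: even the single-slot estimate requires $|E|\gtrsim q^{d/2}$, the $k=1$ case of the hypothesis, not $q^{(d-1)/2}$.)

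The paper resolves this not by decoupling but by induction on $k$ (the proof of Lemma \ref{A}): one peels off a single pin $y^k$ and frequency $t_k\ne 0$ at a time, arriving at the term
$$q^{2(d-k)}\sum_{y^1,\dots,y^{k-1}\in E}\sum_m|\widehat E(m)|^2\,|E\cap H_{y^1,\dots,y^{k-1},m}|,$$
where $H_{y^1,\dots,y^{k-1},m}$ is the $k$-dimensional subspace spanned by $y^1,\dots,y^{k-1},m$. The sphere hypothesis is then used in the form $|E\cap H|\le|S\cap H|\lesssim q^{k-1}$ for a $k$-dimensional subspace $H$ (versus the trivial $q^k$ used in Lemma \ref{A}); this is the higher-dimensional analogue of your "a line meets $S$ in at most two points," but it must be applied to $k$-planes inside the inductive scheme, not to lines one slot at a time. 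If you replace your decoupling step by this induction (or some equivalent device that controls $r_k(m)$ rather than the first moment of $N$), the rest of your outline — Cauchy--Schwarz twice and the exponent bookkeeping — matches the paper and yields the threshold $q^{(d+k-1)/2}$.
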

This in turn yields the following result.
\begin{theorem}\label{kpointsphere}
Let $E\subset S$.  If $|E|\gtrsim q^{\frac{d+k-1}{2}}~,k\leq d-1$ then $|\mathcal{T}_k(E)|\gtrsim q^{k+1 \choose 2}$, in other words $E$ determines a positive proportion of all $k$-simplices.
\end{theorem}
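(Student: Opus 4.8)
The plan is to derive Theorem~\ref{kpointsphere} from Theorem~\ref{distmultprojsphere} by exactly the pigeon-holing argument that takes Theorem~\ref{distmultproj} to Theorem~\ref{kpoint}, adapted to the sphere. First I would recall that by Lemma~\ref{uptocrap}, a $k$-simplex is determined up to the equivalence $\sim$ by the $\binom{k+1}{2}$ pairwise distances among its vertices, so it suffices to count, among subsets $\{V_0,\dots,V_k\}\subset E$, how many distinct distance vectors $(\|V_i-V_j\|)_{0\le i<j\le k}$ arise. I would build the simplex one vertex at a time: having chosen $y^1,\dots,y^k\in E$ (these will play the role of $V_1,\dots,V_k$, a $(k-1)$-star's worth of base points), the vector $(\|x-y^1\|,\dots,\|x-y^k\|)$ as $x$ ranges over $E$ records the remaining $k$ distances needed to pin down the $k$-simplex with apex $x$. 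Theorem~\ref{distmultprojsphere} says that on average over the $y^i$, the number of such vectors is $\gtrsim q^k$.

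The inductive step is where the counting has to be organized carefully. I would argue by induction on $k$: the number of distinct $k$-simplices realized in $E$ is at least a constant times $q^{\binom{k}{2}}$ (the number of distinct $(k-1)$-simplices, by the inductive hypothesis applied with $k-1\le d-2$) times the \emph{average} number of new distance-vectors $q^k$ contributed by the apex — but one must be careful that this product genuinely counts distinct $k$-simplices and that the averaging in Theorem~\ref{distmultprojsphere} does not hide a pathological concentration. The clean way is: $|\mathcal{T}_k(E)|$ is bounded below by the number of distinct tuples $(\|V_i-V_j\|)_{i<j}$; fix the sub-simplex on $V_1,\dots,V_k$ and observe
\[
|\mathcal{T}_k(E)| \ \gtrsim\ \frac{1}{|E|^k}\sum_{y^1,\dots,y^k\in E} \big|\Delta_{y^1,\dots,y^k}(E)\big| \ \cdot\ (\text{count of distinct sub-simplices on the }y^i),
\]
and then feed in Theorem~\ref{distmultprojsphere} for the first factor and the inductive hypothesis (valid since $k-1\le d-2\le d-1$) for the second; the base case $k=1$ is the sphere distance-set statement, which follows from $|\Delta_y(E)|\gtrsim q$ on average, itself a consequence of Theorem~\ref{distmultprojsphere} with $k=1$. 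The hypothesis $k\le d-1$ is exactly what keeps the ambient dimension large enough for each intermediate $j$-simplex, $j\le k$, to be non-degenerate and for Lemma~\ref{uptocrap} to apply at every stage.

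I would also need to check that the factor of $q^{-1}$ losses incurred at each of the $k$ stages of the induction are harmless: each stage multiplies the count by $\gtrsim q$ but may lose an absolute constant, so after $k$ stages one has $\gtrsim q^{\binom{k+1}{2}}$ with an implied constant depending only on $k$ (hence on $d$), which is acceptable. One subtlety: the map from ordered tuples $(V_0,\dots,V_k)$ to equivalence classes is at most $(k+1)!\,|SO_d(\mathbb F_q)|$-to-one, but $|SO_d(\mathbb F_q)|\sim q^{\binom{d}{2}}$ is an absolute factor that divides out cleanly since we are only after the exponent $\binom{k+1}{2}$ on $q$ up to constants; I would state this normalization once at the start.

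\textbf{Main obstacle.} The genuine difficulty is not the bookkeeping but ensuring the inductive step combines the \emph{averaged} bound of Theorem~\ref{distmultprojsphere} with the inductive count \emph{without} a Cauchy--Schwarz-type loss that would degrade the exponent. The safe route is to avoid multiplying two averages and instead run the whole argument as a single $k$-fold sum: prove directly that
\[
\frac{1}{|E|^{k+1}}\sum_{x,y^1,\dots,y^k\in E}\mathbf{1}\big[(x,y^1,\dots,y^k)\ \text{realizes a `new' simplex}\big]\ \gtrsim\ q^{\binom{k+1}{2}-\binom{k+1}{2}} = 1,
\]
equivalently that a positive proportion of ordered $(k+1)$-tuples in $E$ have pairwise-distance vector lying in a set of size $\gtrsim q^{\binom{k+1}{2}}$; this is obtained by iterating Theorem~\ref{distmultprojsphere} in the variable $x$ after having already spread out $y^1,\dots,y^k$, and the key identity $\|x-y\| = 2-2x\cdot y$ on $S$ is used to convert each distance condition into the dot-product form that Theorem~\ref{dotmultproj}'s machinery (underlying Theorem~\ref{distmultprojsphere}) controls. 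Once the tuple-counting version is in hand, $|\mathcal{T}_k(E)|\gtrsim q^{\binom{k+1}{2}}$ follows immediately by dividing by the absolute overcounting factor.
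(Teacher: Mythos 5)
Your high-level plan --- rerun the argument that takes Theorem \ref{distmultproj} to Theorem \ref{kpoint}, with Theorem \ref{distmultprojsphere} supplying the $k$-star count --- is exactly what the paper intends (it states that the proof ``follows directly that of Theorem \ref{kpoint}''). You also correctly identify the real obstacle: one cannot simply multiply the averaged bound $\frac{1}{|E|^k}\sum_{y^1,\dots,y^k}|\Delta_{y^1,\dots,y^k}(E)|\gtrsim q^k$ by an inductive count of distinct base simplices, because the tuples $(y^1,\dots,y^k)$ achieving a large pinned count need not be the ones realizing many distinct congruence classes of bases. But the proposal stops short of resolving this. Your displayed inequality bounding $|\mathcal{T}_k(E)|$ below by the product of that average with a ``count of distinct sub-simplices on the $y^i$'' is not justified and is false in general, and your fallback --- run everything as a single $k$-fold sum showing a positive proportion of $(k+1)$-tuples realize a ``new'' simplex --- is not yet an argument: ``new'' is never defined, and the displayed bound $\gtrsim q^{\binom{k+1}{2}-\binom{k+1}{2}}=1$ is vacuous as written.

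The missing ingredient is the paper's Theorem \ref{Alexthm} and Corollary \ref{winner}: a version of the $k$-star count valid for an arbitrary dense subset $\mathcal{E}\subset E^s$ with $|\mathcal{E}|\sim|E|^s$, obtained by observing that $\nu^{\mathcal{E}(y^1,\dots,y^{s-1})}_{y^1,\dots,y^{s-1}}\leq \nu_{y^1,\dots,y^{s-1}}$ and applying Cauchy--Schwarz with $|\mathcal{E}|^2$ on the left against the second-moment bound of Lemma \ref{B} (in its sphere form, with $q^{d-1}|E|^k$ in place of $q^d|E|^k$). This is what closes the induction: after pigeonholing Theorem \ref{distmultprojsphere} to obtain a set $\mathcal{E}\subset E^k$ of good bases with $|\mathcal{E}|\gtrsim|E|^k$, one applies the refined estimate to $\mathcal{E}$ itself --- not to all of $E^{k-1}$ --- to show that its projections determine a positive proportion of $(k-1)$-stars with apexes restricted to the fibers $\mathcal{E}(y^1,\dots,y^{k-1})$, and then iterates down to $1$-simplices. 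Without this refinement the pathological-concentration issue you flagged is never actually ruled out. Separately, your remark that the overcounting factor $|SO_d(\mathbb{F}_q)|\sim q^{\binom{d}{2}}$ ``divides out cleanly'' is an error: a loss of $q^{\binom{d}{2}}$ would destroy the exponent entirely. The correct, loss-free route is the one the paper takes via Lemma \ref{uptocrap}: bound $|\mathcal{T}_k(E)|$ below directly by the number of distinct distance vectors $(\|V_i-V_j\|)_{i<j}$ realized, with no orbit counting at all.
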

The proof of this theorem we will omit follows directly that of Theorem \ref{kpoint}.

\section{Finite field Fourier transform}

Recall that given a function $f: {\Bbb F}_q^d \to {\Bbb C}$, the Fourier transform with respect to a non-trivial additive character $\chi$ on ${\Bbb F}_q$ is given by the relation 
\begin{equation*} \label{ftdef} \widehat{f}(m)=q^{-d} \sum_{x \in {\Bbb F}_q^d} \chi(-x \cdot m) f(x). \end{equation*} Also recall that the Fourier inversion theorem is given by
\begin{equation*} \label{inversion} f(x)=\sum_{m \in {\Bbb F}_q^d} \chi(x \cdot m) 
\widehat{f}(m) \end{equation*} and the Plancherel theorem is given by
\begin{equation*} \label{plancherel} \sum_{m \in {\Bbb F}_q^d} {|\widehat{f}(m)|}^2=q^{-d} \sum_{x \in {\Bbb F}_q^d} {|f(x)|}^2 \end{equation*} 

For a subset $E$ of $\mathbb F_q^d$ we will use $E(x)$ to denote the indicator function of $E$.

\section{Proof of Theorem \ref{wolffin2d} - Wolff's exponent}
This section contains two subsections.
In the first subsection we obtain main lemmas for the proof of Theorem \ref{wolffin2d}.
The complete proof of Theorem \ref{wolffin2d} is given in the second subsection.

\subsection{Lemmas for the proof of Theorem \ref{wolffin2d}}
We begin by defining the counting function,
$$\nu(t)=\sum_{\|x-y\|=t} E(x)E(y).$$
Then we may write
$$\nu(t)=\sum_{x ,y\in E} S_t(x-y),$$
where $S_t$ is the sphere of radius $t$, $\{x\in \mathbb F_q^d:\|x\|=t\}$.

We first obtain some information about $\nu(t)$.

\begin{lemma}\label{main} Let $E\subset {\mathbb F}_q^2.$ Then we have
$$\sum_{t\in {\mathbb F}_q} \nu^2(t)= q^6\sum_{t\in {\mathbb F}_q} \left(\sum_{m\in S_t} |\widehat{E}(m)|^2 \right)^2 +q^{-1}|E|^4-q|E|^2.$$
\end{lemma}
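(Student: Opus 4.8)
The plan is to expand $\nu(t)=\sum_{x,y\in E}S_t(x-y)$ in terms of the Fourier transform of the sphere $S_t$ and of $E$. First I would write $S_t(x-y)=\sum_{m}\chi((x-y)\cdot m)\widehat{S_t}(m)$ by Fourier inversion, so that
\[
\nu(t)=\sum_{m\in{\Bbb F}_q^2}\widehat{S_t}(m)\sum_{x,y\in E}\chi((x-y)\cdot m)=q^{2d}\sum_{m}\widehat{S_t}(m)\,|\widehat{E}(m)|^2,
\]
with $d=2$ here. Splitting off the $m=0$ term (which contributes $q^{4}\widehat{S_t}(0)\,|E|^2/q^{2d}$-type main terms) and squaring gives $\nu^2(t)$ as a double sum over $m,m'$; summing over $t\in{\Bbb F}_q$ then reduces everything to understanding $\sum_{t}\widehat{S_t}(m)\widehat{S_t}(m')$, together with the cross terms involving $m=0$ or $m'=0$.

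The key input is an explicit evaluation (or near-evaluation) of $\widehat{S_t}(m)$ via Gauss sums: for $t\neq 0$ in two dimensions $\widehat{S_t}(m)$ has a clean closed form involving a quadratic Gauss sum and the character $\chi(\text{const}\cdot\|m\|/t)$, and $|S_t|=q-\eta(-1)$ or similar, while $\widehat{S_0}$ is handled separately. The crucial orthogonality step is that $\sum_{t\in{\Bbb F}_q}\widehat{S_t}(m)\widehat{S_t}(m')$, after inserting the Gauss-sum formulas, becomes a sum over $t$ of additive characters in $1/t$ that collapses: it is nonzero essentially only when $\|m\|=\|m'\|$ (this is where the condition "$m\in S_t$" for a common $t$ enters), producing the factor $\sum_{t}\bigl(\sum_{m\in S_t}|\widehat{E}(m)|^2\bigr)^2$ after one reorganizes the $m,m'$ sum by the common value $t=\|m\|=\|m'\|$. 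I would also need $\sum_t |S_t(x)|$-type identities, i.e. $\sum_{t\in{\Bbb F}_q}\widehat{S_t}(0)=$ (number of points counted with $t$) which is just $q^{2d}$ bookkeeping, and the Plancherel relation $\sum_m|\widehat{E}(m)|^2=q^{-d}|E|$ together with $\widehat{E}(0)=q^{-d}|E|$ to pin down the lower-order terms $q^{-1}|E|^4$ and $-q|E|^2$.

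Concretely the steps are: (1) Fourier-expand $\nu(t)$ to get $\nu(t)=q^{4}\sum_m\widehat{S_t}(m)|\widehat{E}(m)|^2$; (2) split $m=0$ vs $m\neq 0$ and record $\widehat{S_t}(0)=q^{-2}|S_t|$; (3) form $\nu^2(t)$, sum over $t$, and classify the four pieces by whether $m$ and/or $m'$ vanish; (4) for the main piece ($m,m'\neq 0$) insert the exact Gauss-sum formula for $\widehat{S_t}(m)$ and carry out the sum over $t\in{\Bbb F}_q^{*}$, which by additive-character orthogonality forces $\|m\|=\|m'\|$ and yields the claimed $q^{6}\sum_t(\sum_{m\in S_t}|\widehat{E}(m)|^2)^2$; (5) collect the remaining terms using Plancherel and $\widehat E(0)=q^{-2}|E|$, and verify they sum to exactly $q^{-1}|E|^4 - q|E|^2$, being careful about the $t=0$ sphere and the $-\eta(-1)$ corrections in $|S_t|$ which (since $q\equiv 3\bmod 4$ is not assumed in this lemma) must cancel or be absorbed. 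The main obstacle I anticipate is step (4)–(5): getting the Gauss-sum normalization exactly right and making sure that the $t=0$ term and the various lower-order contributions (in particular the interaction of $|S_t|=q+O(1)$ with the squaring) combine precisely into $q^{-1}|E|^4-q|E|^2$ with no leftover error term, since the lemma is stated as an exact identity rather than an estimate.
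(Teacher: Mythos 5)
Your proposal follows essentially the same route as the paper: Fourier inversion of $S_t(x-y)$ to get $\nu(t)=q^4\sum_m|\widehat E(m)|^2\widehat{S_t}(m)$, splitting the squared sum by whether $m$ and $m'$ vanish, using the Gauss-sum formula for $\widehat{S_t}(m)$ so that orthogonality in $t$ reduces the main piece to $\sum_{s\neq 0}\chi\left(\frac{\|m\|-\|m'\|}{4s}\right)$ (which equals $q-1$ when $\|m\|=\|m'\|$ and $-1$ otherwise), and Plancherel plus $\widehat E(0)=q^{-2}|E|$ for the lower-order terms. The only part you defer — the exact cancellation into $q^{-1}|E|^4-q|E|^2$ — is precisely the bookkeeping the paper carries out, and the $q\equiv 1$ vs. $3\pmod 4$ worry dissolves because $\psi\bigl((-1)^{d/2}\bigr)G_1^d(\psi,\chi)=q^{d/2}$ for $d$ even in either case.
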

\begin{proof}
Using the Fourier inversion theorem of $S_t(x-y)$ and definition of the Fourier transform, we have
\begin{equation}\label{zerodistance}\nu(t)=\sum_{x,y\in E} S_t(x-y)= q^4 \sum_{m\in {\mathbb F}_q^2} |\widehat{E}(m)|^2 \widehat{S_t}(m).\end{equation}

It follows that
$$ \nu^2(t)=q^8 \sum_{m,m'\in {\mathbb F}_q^2} |\widehat{E}(m)|^2 |\widehat{E}(m')|^2 \widehat{S_t}(m) \overline{\widehat{S_t}}(m')$$
$$=q^8 |\widehat{E}(0,0)|^4 |\widehat{S_t}(0,0)|^2 + 2 q^8 \sum_{m\in {\mathbb F}_q^2\setminus (0,0)} |\widehat{E}(m)|^2\widehat{S_t}(m) |\widehat{E}(0,0)|^2 \overline{\widehat{S_t}}(0,0)$$
$$+ q^8 \sum_{m,m'\in {\mathbb F}_q^2\setminus (0,0)} |\widehat{E}(m)|^2 |\widehat{E}(m')|^2 \widehat{S_t}(m) \overline{\widehat{S_t}}(m')= I(t)+II(t)+III(t).$$
Since $ \widehat{E}(0,0)=q^{-2}|E|$ and $ \widehat{S_t}(0,0)= q^{-2}|S_t|,$ we obtain
\begin{equation}\label{formulaI}\sum_{t\in{\mathbb F}_q} I(t)=q^{-4}|E|^4\sum_{t\in {\mathbb F}_q} |S_t|^2.\end{equation}
We will need the following lemma which we will delay proving until the last section.

\begin{lemma}\label{sphere} Let $S_t\subset {\mathbb F}_q^d.$ Then we have
$$\sum_{t\in {\mathbb F}_q} |S_t|^2= q^{2d-1} +q^{d}-q^{d-1},$$
and also for $m \in {\mathbb F}_q^d \setminus \{0, \ldots, 0)$,
 $$\sum_{t\in {\mathbb F}_q} |\widehat{S_t}(m)|^2=  q^{-d}-q^{-d-1},$$
 and
 $$\sum_{t\in {\mathbb F}_q} |S_t| \widehat{S}_t(m)\leq  1-  q^{-1}.$$
\end{lemma}

The first part of Lemma \ref{sphere} together with (\ref{formulaI}) yields the following equality:
\begin{equation}\label{I}
\sum_{t\in {\mathbb F}_q} I(t)=|E|^4(q^{-1}+q^{-2}-q^{-3}).\end{equation}

Now we compute the $\sum_{t\in {\mathbb F}_q} II(t).$ It follows that
\begin{equation}\label{formulaII} \sum_{t\in {\mathbb F}_q} II(t)= 2q^2 |E|^2 \sum_{m\neq (0,0)} |\widehat{E}(m)|^2\sum_{t\in {\mathbb F}_q} |S_t|\widehat{S_t}(m).\end{equation}
We claim that if the dimension $d$ is even, $S_t \subset {\mathbb F}_q^d,$ and $ m\in {\mathbb F}_q^d \setminus (0,\ldots,0)$, then we have
$$ \sum_{t\in {\mathbb F}_q} |S_t| \widehat{S_t}(m)=q^{(-d-2)/2} \psi\left((-1)^{d/2}\right) G_1^d(\psi,\chi) \sum_{s\neq 0} \chi\left(\frac{\|m\|}{4s}\right),$$
where $\psi$ is the quadratic character of order two and $ G_1(\psi, \chi)$ is the Gauss sum given by $ G_1(\psi, \chi)= \sum_{s\neq 0} \psi(s)\chi(s).$ The claim follows from the proof of the third part of Lemma \ref{sphere} (see the proof of Lemma \ref{sphere} in the last section).
We also need the following theorem .
\begin{theorem}[Theorem 5.15 in \cite{LN97}]\label{ExplicitGauss}
Let ${\mathbb F}_q$ be a finite field with $ q= {p}^l$, where $p$ is an odd prime and $l \in {\mathbb N}.$
Let $\psi$ be the quadratic character of ${\mathbb F}_q$ and let $\chi$ be the canonical additive character of ${\mathbb F}_q$.
Then we have
$$G_1(\eta, \chi)= \left\{\begin{array}{ll}  {(-1)}^{l-1} q^{\frac{1}{2}} \quad &\mbox{if} \quad p =1 \,\,(mod~4) \\
                    {(-1)}^{l-1} i^l q^{\frac{1}{2}} \quad &\mbox{if} \quad p =3 \,\,(mod~4).\end{array}\right. $$ \end{theorem}
Using Theorem \ref{ExplicitGauss}, we see that if $d$ is even, then $\psi\left((-1)^{d/2}\right) G_1^d(\psi,\chi) =q^{d/2},$ because
$\psi(-1)=1$ if $ q\equiv 1~(mod~4)$ and $\psi(-1)=-1$ if $q=3~(mod~4).$
Thus if $d=2$ and $m\neq (0,0)$, we have
$$ \sum_{t\in {\mathbb F}_q} |S_t|\widehat{S_t}(m)= q^{-1}\sum_{s\neq 0} \chi\left(\frac{\|m\|}{4s}\right).$$
Plugging this into (\ref{formulaII}), we have
\begin{align*} \sum_{t\in {\mathbb F}_q} II(t)&= 2q|E|^2 \sum_{m\neq (0,0)} |\widehat{E}(m)|^2\sum_{s\neq 0} \chi\left(\frac{\|m\|}{4s}\right)\\
&=2q|E|^2 \left( \sum_{m\neq(0,0): \|m\|= 0}|\widehat{E}(m)|^2(q-1)+\sum_{m\neq (0,0):\|m\|\neq 0}(-1) |\widehat{E}(m)|^2 \right)\\
&=2q|E|^2 \left( q \sum_{m\neq(0,0): \|m\|=0}|\widehat{E}(m)|^2- \sum_{m\neq (0,0)} |\widehat{E}(m)|^2\right).\end{align*}

Now Replacing  $\sum_{m\neq(0,0): \|m\|=0}|\widehat{E}(m)|^2$ by $\sum_{\|m\|=0} |\widehat{E}(m)|^2- |\widehat{E}(0,0)|^2$ and
observing by the Plancherel theorem that $\sum_{m\neq (0,0)}|\widehat{E}(m)|^2= \sum_{m\in {\mathbb F}_q^2}|\widehat{E}(m)|^2 - |\widehat{E}(0,0)|^2=q^{-2}|E|-q^{-4}|E|^2$, we obtain
\begin{equation}\label{II} \sum_{t\in {\mathbb F}_q} II(t)=2q^2|E|^2 \sum_{\|m\|=0} |\widehat{E}(m)|^2 -2q^{-1}|E|^3-2q^{-2}|E|^4+2q^{-3}|E|^4.\end{equation}
Finally, we estimate the $ \sum_{t\in {\mathbb F}_q} III(t)$ which is given by
\begin{equation}\label{formulaIII} \sum_{t\in {\mathbb F}_q} III(t)= q^8\sum_{m,m'\neq (0,0)} 
|\widehat{E}(m)|^2|\widehat{E}(m')|^2 \sum_{t\in{\mathbb F}_q} \widehat{S_t}(m)\overline{\widehat{S_t}}(m').\end{equation}
In \cite{IKo08}, the Fourier transform of $S_t$ was given by the formula
\begin{equation}\label{sphereFouriertransform} \widehat{S_t}(m)=q^{-1}\delta_0(m)+ q^{-d-1}\psi^d(-1)G_1^d(\psi,\chi) \sum_{s\neq 0} \chi\left( \frac{\|m\|}{4s} +st\right) \psi^d(s),\end{equation}
where $\delta_0(m)=1$ if $m=(0,\ldots,0)$ and $\delta_0(m)=0$ if $m\neq (0,\ldots,0).$
Using this formula and the orthogonality relation of the non-trivial additive character $\chi$ in $t$-variables, 
we see that for $m,m'\in {\mathbb F}_q^2\setminus (0,0)$,
$$ \sum_{t\in {\mathbb F}_q} \widehat{S_t}(m) \overline{\widehat{S_t}}(m')= q^{-3} \sum_{s\neq 0} \chi\left( \frac{\|m\|-\|m'\|}{4s}\right).$$
Plugging this into (\ref{formulaIII}), we have
$$ \sum_{t\in {\mathbb F}_q} III(t)= q^5 \sum_{m,m'\neq (0,0)}|\widehat{E}(m)|^2|\widehat{E}(m')|^2\sum_{s\neq 0}
 \chi\left(\frac{\|m\|-\|m'\|}{4s}\right).$$
 Using a change of variables, $ 1/(4s)\to s,$ and the properties of the summation notation, we have
\begin{align*} \sum_{t\in {\mathbb F}_q} III(t)&=q^5 \sum_{m,m'\neq (0,0)}|\widehat{E}(m)|^2|\widehat{E}(m')|^2 \left(-1 + \sum_{s\in {\mathbb F}_q} \chi(s(\|m\|-\|m'\|)) \right)\\
&=q^6 \sum_{m,m'\neq (0,0): \|m\|=\|m'\|} |\widehat{E}(m)|^2 |\widehat{E}(m')|^2-q^5 \sum_{m,m'\neq (0,0)} |\widehat{E}(m)|^2 |\widehat{E}(m')|^2\\
&= q^6 \sum_{t\in {\mathbb F}_q} \left( \sum_{m\neq (0,0): \|m\|=t} |\widehat{E}(m)|^2\right)^2 -q^5 \left( \sum_{m\neq (0,0)}|\widehat{E}(m)|^2\right)^2\\
&=q^6 \left(\sum_{m\neq (0,0):\|m\|=0}|\widehat{E}(m)|^2\right)^2 +q^6 \sum_{t\neq 0} \left( \sum_{m\neq (0,0): \|m\|=t} |\widehat{E}(m)|^2\right)^2-q^5\left(q^{-2}|E|-q^{-4}|E|^2\right)^2.\end{align*}

Since $ \sum_{m\neq (0,0):\|m\|=0}|\widehat{E}(m)|^2= \sum_{\|m\|=0} |\widehat{E}(m)|^2 - |\widehat{E}(0,0)|^2,$ and $\widehat{E}(0,0)=q^{-2}|E|,$ a direct calculation yields 
\begin{equation}\label{III} \sum_{t\in {\mathbb F}_q} III(t)=q^6 \sum_{t\in {\mathbb F}_q} \left( \sum_{m\in S_t} |\widehat{E}(m)|^2\right)^2-2q^2|E|^2 \sum_{m\in S_0}|\widehat{E}(m)|^2 +q^{-2}|E|^4+2q^{-1}|E|^3- q|E|^2- q^{-3}|E|^4. \end{equation}

From (\ref{I}),(\ref{II}), and (\ref{III}), the proof of Lemma \ref{main} is complete.
\end{proof}

We now introduce and prove the second key lemma for the proof of Theorem \ref{wolffin2d}.
The following lemma was implicitly given in \cite{IKo07} and we shall follow the outline in \cite{IKo07} to get the following lemma. 

\newpage
\begin{lemma}\label{restriction} If $E$ is a subset of $ {\mathbb F}_q^2,$ then it follows that

$$ \max_{t\in {\mathbb F}_q \setminus \{0\}} \sum_{m\in S_t}|\widehat{E}(m)|^2 \leq  \frac{\sqrt{3} |E|^{3/2}}{q^3} .$$
\end{lemma}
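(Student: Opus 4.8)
The plan is to follow the $L^2$ restriction/extension scheme for the circle in $\mathbb{F}_q^2$ from \cite{IKo07}, being careful to track the numerical constant. Fix $t \in \mathbb{F}_q \setminus \{0\}$ and abbreviate $\sigma_E(t) = \sum_{m \in S_t} |\widehat{E}(m)|^2$; it suffices to prove $\sigma_E(t) \le \sqrt{3}\,|E|^{3/2} q^{-3}$. First I would dualize. Unfolding one factor of $\widehat{E}$ by the definition $\widehat{E}(m) = q^{-2}\sum_x E(x)\chi(-x\cdot m)$ and interchanging the order of summation gives
$$ \sigma_E(t) = q^{-2} \sum_{x} E(x)\, B(x), \qquad B(x) := \sum_{m \in S_t} \overline{\widehat{E}(m)}\,\chi(-x\cdot m), $$
so that $B$ is the Fourier extension of $\overline{\widehat{E}}\big|_{S_t}$. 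Since the left-hand side equals the nonnegative real number $\sigma_E(t)$, Hölder's inequality with exponents $\tfrac{4}{3}$ and $4$ gives $\sigma_E(t) \le q^{-2} |E|^{3/4}\, \|B\|_{L^4(\mathbb{F}_q^2)}$, where $\|E\|_{L^{4/3}} = |E|^{3/4}$.

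The core estimate is $\|B\|_{L^4}^4 \le 3 q^2\, \sigma_E(t)^2$. Expanding the fourth power and summing in $x$ via $\sum_x \chi(x\cdot u) = q^2$ when $u = 0$ and $0$ otherwise, one gets $\|B\|_{L^4}^4 = q^2 \sum_{v} |G(v)|^2$ with $G(v) := \sum_{m_1 + m_2 = v,\; m_1, m_2 \in S_t} \widehat{E}(m_1)\widehat{E}(m_2)$. Cauchy--Schwarz yields $|G(v)|^2 \le r_t(v) \sum_{m_1 + m_2 = v} |\widehat{E}(m_1)|^2 |\widehat{E}(m_2)|^2$, where $r_t(v) = \#\{(m_1,m_2) \in S_t \times S_t : m_1 + m_2 = v\}$. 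For $v \ne 0$, the conditions $m_1 \in S_t$ and $v - m_1 \in S_t$ force $2\,v\cdot m_1 = \|v\|$, so $m_1$ lies on the intersection of a line with $S_t$; since $t \ne 0$, the curve $S_t$ meets every line in at most two points (a one-line check, valid over any finite field of odd characteristic), hence $r_t(v) \le 2$, and the sum over $v \ne 0$ is at most $2\,\sigma_E(t)^2$. The single degenerate frequency $v = 0$ is handled on its own: there $m_2 = -m_1 \in S_t$ automatically, and Cauchy--Schwarz together with the bijection $m \mapsto -m$ of $S_t$ gives $|G(0)| \le \sum_{m \in S_t}|\widehat{E}(m)|^2 = \sigma_E(t)$, contributing at most $\sigma_E(t)^2$. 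Adding the two contributions proves the core estimate.

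Combining the two steps, $\sigma_E(t) \le q^{-2}|E|^{3/4}\cdot 3^{1/4} q^{1/2}\, \sigma_E(t)^{1/2}$; discarding the trivial case $\sigma_E(t) = 0$, dividing by $\sigma_E(t)^{1/2}$ and squaring gives $\sigma_E(t) \le \sqrt{3}\,|E|^{3/2} q^{-3}$, and taking the maximum over $t \ne 0$ finishes the lemma. The main obstacle is the combinatorial step, and in particular pinning down the constant: one must isolate the degenerate frequency $v = 0$ (this is exactly what turns the factor $2$ into the factor $3$ responsible for the $\sqrt 3$) and verify the "at most two intersection points" bound uniformly in $q$, including the case where $-1$ is a square in $\mathbb{F}_q$, so that $S_t$ is a hyperbola rather than an ellipse. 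Everything else is routine Fourier bookkeeping.
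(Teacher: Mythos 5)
Your argument is correct and is essentially the paper's proof of Lemma \ref{restriction}: the same $L^4$ extension/convolution estimate for $S_t$, with the constant $3$ arising in exactly the same way (the count $r_t(v)\le 2$ of representations $v=m_1+m_2$ with $m_1,m_2\in S_t$ for $v\ne 0$, plus the isolated $v=0$ term). The only difference is presentational: you apply H\"older directly to $\sigma_E(t)=q^{-2}\sum_x E(x)B(x)$ and bootstrap, whereas the paper proves the $L^2\to L^4$ extension bound with explicit norm $\left(3q^2|S_t|^{-2}\right)^{1/4}$ and then dualizes to a restriction estimate applied to $g=E$.
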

\begin{proof}
The proof is based on the extension theorem related to circles in ${\mathbb F}_q^2.$
In \cite{IKo07}, it was proved that the extension operator for the circle with non-zero radius is bounded from $L^2$ to $L^4$ and the mapping property is sharp. However, the operator norm was not given in the explicit form.
Here, we shall observe the explicit operator norm and derive  Lemma \ref{restriction}. We begin by recalling the meaning of norms and Fourier analysis machinery. We are working in the space $({\mathbb F}_q^2, dx)$ which we endow with the normalized counting measure. Thus if $f$ is defined on the space, then the $L^p$-norm is given by 
$$\|f\|_{L^p({\mathbb F}_q^2,dx)} = \left(q^{-2} \sum_{x\in {\mathbb F}_q^2} |f(x)|^p\right)^{1/p},$$
where $q^2$ is the number of elements of ${\mathbb F}_q^2.$
Recall that the Fourier transform of the function $f$ is actually defined on the dual space of $({\mathbb F}_q^2,dx).$ We denote by $({\mathbb F}_q^2,dm)$ the dual space, which is endowed with the counting measure $dm$. For a non-trivial additive character $\chi$ of ${\mathbb F}_q$, we therefore define the Fourier transform of the function $f$ on $({\mathbb F}_q^2,dx)$ by the formula
$$ \widehat{f}(m)= q^{-2} \sum_{x\in {\mathbb F}_q^2} \chi(-x\cdot m) f(x),$$
where $m$ is considered as an element of the dual space $({\mathbb F}_q^2, dm).$ Taking the different measures between the function space and the dual space, we obtain the Plancherel theorem, that is 
$$ \|\widehat{f}\|_{L^2({\mathbb F}_q^2, dm)}= \|f\|_{L^2({\mathbb F}_q^2, dx)}.$$
Note that this means the following:
$$ \sum_{m\in {\mathbb F}_q^2} |\widehat{f}(m)|^2 = q^{-2} \sum_{x\in {\mathbb F}_q^2} |f(x)|^2,$$
where $f$ is a function on $({\mathbb F}_q^2, dx)$ and $\widehat{f}$ is a function on $({\mathbb F}_q^2, dm).$ We now introduce the normalized curve measure $d\sigma$ on the circle $S_t$ in $({\mathbb F}_q^2, dx).$ The measure $d\sigma$ is defined by the relation
$$ \widehat{fd\sigma}(m)= |S_t|^{-1} \sum_{x\in S_t} \chi(-x\cdot m)f(x),$$
where $f$ is a function on $({\mathbb F}_q^2, dx).$ In fact, the measure $\sigma$ can be considered as the following function on $( {\mathbb F}_q^2,dx)$:
$$ \sigma(x)= q^{2}|S_t|^{-1} S_t(x),$$
where $S_t(x)$ means the characteristic function on $S_t.$

 Using Plancherel, we first observe that
\begin{equation*}\label{number}\|\widehat{fd\sigma}\|^4_{L^4({\mathbb F}_q^2,dm)}= \|fd\sigma \ast fd\sigma\|^2_{L^2({\mathbb F}_q^2,dx)},
\end{equation*}
which is
$$=q^{-2}|fd\sigma \ast fd\sigma (0,0)|^2 + q^{-2}\sum_{x\in {\mathbb F}_q^2\setminus (0,0)} |fd\sigma \ast fd\sigma (x)|^2=I+II.$$

To estimate the term $I$, we note that
$$ |fd\sigma \ast fd\sigma(0,0)|\leq \sum_{m\in {\mathbb F}_q^2} |\widehat{fd\sigma}(m)|^2=q^2|S_t|^{-1} \|f\|^2_{L^2(S_t,d\sigma)}.$$
Thus the term $I$ is estimated by 
\begin{equation}\label{numberI} I\leq q^2 |S_t|^{-2} \|f\|^4_{L^2(S_t, d\sigma)}.\end{equation}
Using H\"older's inequality, we have
\begin{align}\label{numberII}
II= &q^{-2} \sum_{x\in {\mathbb F}_q^2\setminus (0,0)} |fd\sigma \ast fd\sigma(x)|^2\nonumber\\
\leq& \|d\sigma \ast d\sigma\|_{L^\infty \left({\mathbb F}_q^2\setminus (0,0),dx \right)} \|f\|^4_{L^2(S_t,d\sigma)}\nonumber\\
=&\left(\max_{x\neq (0,0)} q^2 |S_t|^{-2} \sum_{(\alpha,\beta)\in S_t\times S_t: \alpha+\beta=x} 1\right)\cdot \|f\|^4_{L^2(S_t,d\sigma)}.
\end{align}

From (\ref{number}), (\ref{numberI}), and (\ref{numberII}), we obtain the following:
\begin{align*}\|\widehat{fd\sigma}\|_{L^4({\mathbb F}_q^2,dm)}\leq &\left( q^2|S_t|^{-2}+ q^2|S_t|^{-2} \max_{x\neq (0,0)}\sum_{(\alpha,\beta)\in S_t\times S_t: \alpha+\beta=x} 1\right)^{1/4} \|f\|_{L^2(S_t,d\sigma)}\\
\leq& \left(3q^2|S_t|^{-2}\right)^{1/4} \|f\|_{L^2(S_t,d\sigma)},\end{align*}
By duality, we have the following restriction estimate: for all complex-valued function $g$ on ${\mathbb F}_q^2,$
\begin{equation}\label{dual} \|\widehat{g}\|_{L^2(S_t, d\sigma)}\leq \left(3q^2|S_t|^{-2}\right)^{1/4} \|g\|_{L^{4/3}({\mathbb F}_q^2, dm)}.\end{equation}
Since the function $g$ above is defined on $({\mathbb F}_q^2, dm)$ with a  counting measure $dm$, the Fourier transform of $g$ is given
$$ \widehat{g}(x)= \sum_{m\in {\mathbb F}_q^2} \chi(-x\cdot m)g(m).$$
Moreover, since $d\sigma$ is a normalized curve measure on the circle $S_t$, we have
$$\|\widehat{g}\|_{L^2(S_t, d\sigma)}=\left( |S_t|^{-1} \sum_{x\in S_t} |\widehat{g}(x)|^2\right)^{1/2}.$$
After taking $g$ as a characteristic function on the set $E\subset ({\mathbb F}_q^2, dm)$ and identifying the space $({\mathbb F}_q^2, dx)$ with the dual space  $({\mathbb F}_q^2, dm)$, the conclusion in Theorem (\ref{restriction}) immediately follows from the inequality (\ref{dual}).
\end{proof}

\subsection{The complete proof of Theorem \ref{wolffin2d}}

We first prove the first part of Theorem \ref{wolffin2d}. 
Applying the Cauchy-Schwarz inequality, we see that
$$ |E|^4=\left(\sum_{t\in {\mathbb F}_q } \nu(t)\right)^2 \leq |\Delta(E)| \sum_{t\in {\mathbb F}_q}\nu^2(t).$$
It follows that 
\begin{equation}\label{formuladist1} |\Delta(E)|\geq \frac{|E|^4}{ \sum_{t\in {\mathbb F}_q}\nu^2(t)}.\end{equation}
Thus our main work is to find the good upper bound of  $\sum_{t\in {\mathbb F}_q}\nu^2(t).$
If $q\equiv 3~(mod~4)$, then the circle $S_0$ with zero radius only contains the origin. From Lemma \ref{main} and Lemma \ref{restriction}, we therefore obtain the following :
\begin{align*} \sum_{t\in {\mathbb F}_q} \nu^2(t)\leq & q^6|\widehat{E}(0,0)|^4 + q^6  \left(\max_{t\neq 0} \sum_{m\in S_t}|\widehat{E}(m)|^2\right) \cdot \sum_{m\neq (0,0)} |\widehat{E}(m)|^2 + q^{-1}|E|^4-q|E|^2\\
\leq & q^6 q^{-8}|E|^4 +q^6 \frac{\sqrt{3} |E|^{3/2}}{q^3} (q^{-2}|E|-q^{-4}|E|^2)+q^{-1}|E|^4-q|E|^2\\
 =& q^{-1}|E|^4+q^{-2}|E|^4-q|E|^2+ \sqrt{3}|E|^{5/2} \left(q-q^{-1}|E|\right). \end{align*}
 If we assume that $q^{4/3}\leq |E|\leq q^{3/2},$ then it is clear that the last term above is less than the value $(1+\sqrt{3}) q^{-1}|E|^4.$ Thus we conclude that for every $ q^{4/3}\leq |E| \leq q^{3/2}$,
\begin{equation}\label{smallset} |\Delta(E)|> \frac{q}{1+\sqrt{3}}.\end{equation}
For $ |E|> q^{3/2}$, the inequality (\ref{smallset}) is clear, because $|\Delta(E')|\leq|\Delta(E)| $ if $E'\subset E.$
Thus we complete the proof of the first part of Theorem \ref{wolffin2d}.
 
 We now prove the second part of Theorem \ref{wolffin2d}.
 We assume that $q\equiv 1~(mod~4).$ Applying the Cauchy-Schwarz inequality, we have 
 \begin{align*}(|E|^2-\nu(0))^2&= \left(\sum_{t\in {\mathbb F}_q\setminus \{0\}} \nu(t)\right)^2\\
 &\leq \left(\sum_{t\neq 0: t\in \Delta(E)}1 \right) \cdot
 \left(\sum_{t\neq 0} \nu^2(t)\right)\\
 &=\left(|\Delta(E)|-1\right) \left( \sum_{t\neq 0} \nu^2(t)\right).\end{align*} 
 It follows that
 \begin{equation}\label{squarenumber}
 |\Delta(E)|\geq 1 + \frac{(|E|^2-\nu(0))^2}{ \left(\sum_{t\in {\mathbb F}_q} \nu^2(t)\right)-\nu^2(0)}. \end{equation}
 Let us estimate $\nu(0).$ From  (\ref{zerodistance}) and  (\ref{sphereFouriertransform}), we have
 $$\nu(0)= q^4 \sum_{m\in {\mathbb F}_q^2} |\widehat{E}(m)|^2 \left( q^{-1}\delta_0(m)+q^{-3}G^2_1(\psi,\chi)\sum_{s\neq 0} \chi\left(\frac{\|m\|}{4s}\right)\right).$$
 Recall that $\widehat{E}(0,0)=q^{-2}|E|$, and observe from Theorem \ref{ExplicitGauss} that $G^2_1(\psi,\chi)=q$ for $q\equiv 1~(mod~4).$ Then we see that
 $$\nu(0)=q^{-1}|E|^2 + q^2 \sum_{m\in {\mathbb F}_q^2}|\widehat{E}(m)|^2 \sum_{s\neq 0} \chi\left(\frac{\|m\|}{4s}\right).$$
 Writing $ \sum_{m\in {\mathbb F}_q^2} = \sum_{\|m\|=0} + \sum_{\|m\|\neq 0}$ and calculating the sum over $s\neq 0,$ we see
 $$\nu(0) =q^{-1}|E|^2 +q^2 \sum_{\|m\|=0} |\widehat{E}(m)|^2(q-1) -q^2\sum_{\|m\|\neq 0} |\widehat{E}(m)|^2.$$
 Putting together the sums and applying the Plancherel theorem, we have
 \begin{equation} \label{formulazero} \nu(0)= q^{-1}|E|^2 +q^3 \sum_{\|m\|=0}|\widehat{E}(m)|^2-|E|. \end{equation}
 We now estimate $ \sum_{t\in {\mathbb F}_q} \nu^2(t).$ From Lemma \ref{main} and Lemma \ref{restriction}, we have
 \begin{align}\label{squareofv} \sum_{t\in {\mathbb F}_q} \nu^2(t)=& q^6 \left(\sum_{m\in S_0} |\widehat{E}(m)|^2\right)^2 + q^6 \sum_{t\neq 0} 
 \left(\sum_{m\in S_t}|\widehat{E}(m)|^2\right)^2 +q^{-1}|E|^4-q|E|^2 \nonumber\\
 \leq & q^6 \left(\sum_{m\in S_0} |\widehat{E}(m)|^2\right)^2+ q^6 \left( \max_{t\neq 0} \sum_{m\in S_t} |\widehat{E}(m)|^2\right)\cdot \left(\sum_{\|m\|\neq 0} |\widehat{E}(m)|^2\right)+ q^{-1}|E|^4-q|E|^2\nonumber\\
 \leq & q^6 \left(\sum_{m\in S_0} |\widehat{E}(m)|^2\right)^2+ q^6 \frac{\sqrt{3}|E|^{3/2}}{q^3}\left(q^{-2}|E|-\sum_{\|m\|= 0} |\widehat{E}(m)|^2\right)   + q^{-1}|E|^4-q|E|^2.
  \end{align}
 Letting $\Omega(E)= \sum_{\|m\|=0} |\widehat{E}(m)|^2,$ and $R(E)= q^{-1}|E|^4-q^{-2}|E|^4+2q^{-1}|E|^3+\sqrt{3}q|E|^{5/2}-q|E|^2-|E|^2$ and plugging (\ref{formulazero}) and (\ref{squareofv}) into the formula (\ref{squarenumber}), we have
 \begin{equation}\label{lowerbound}|\Delta(E)|\geq 1+ \frac{ \left(|E|^2-q^{-1}|E|^2+|E|-q^3\Omega(E)\right)^2}{(-2q^2|E|^2-\sqrt{3}q^3|E|^{3/2}+2q^3|E|) \Omega(E) +R(E)}.\end{equation}
We aim to find the lower bound of the right-hand side in (\ref{lowerbound}). 
Since $|E|\geq q^{4/3}$ and $|E|$ is a positive integer, it suffices to show that the second part of Theorem \ref{wolffin2d} holds for all $E\subset {\mathbb F}_q^2$ with $|E|=q^{\alpha}$ where $\alpha >0$ is the minimum value such that $ q^{\alpha}$ is an integer and $q^{\alpha}\geq q^{4/3}.$ The general case follows from the simple fact that $ |\Delta(E')|\leq |\Delta(E)|$ if $ E'\subset E.$ Whenever we choose such a set $E$, $\Omega(E)$ is just a constant but we don't know the exact value for $\Omega(E).$ However, the range of $\Omega(E)$ takes the following:
$$ q^{-4}|E|^2\leq \Omega(E)\leq q^{-2}|E|,$$
because  $ |\widehat{E}(0,0)|^2\leq \Omega(E)\leq \sum_{m\in {\mathbb F}_q^2}|\widehat{E}(m)|^2.$ For a fixed $E$ and $q$, we shall consider the right-hand side of (\ref{lowerbound}) as a function in terms of $\Omega(E).$ If we put $ \Omega(E)=x, a=|E|^2-q^{-1}|E|^2+|E|, b=-2q^2|E|^2-\sqrt{3}q^3|E|^{3/2}+2q^3|E|, $ and $R(E)=c$, then a lower bound of the right-hand side of (\ref{lowerbound}) is given by the minimum value of the following function:
\begin{equation}\label{function}
f(x)= \frac{(a-q^3 x)^2}{b x+c}\quad \mbox{for}\quad q^{-4}|E|^2\leq x\leq q^{-2}|E|.\end{equation}
If $|E|=q^\alpha$ is the smallest integer such that $\alpha\geq 4/3$, then we claim that the minimum of the function $f$ on $q^{-4}|E|^2\leq x\leq q^{-2}|E|$ happens at $x=q^{-4}|E|^2$ if $q$ is sufficiently large ( with the help of a calculator, $q>9$). To see this, note that $x=x_0= -b^{-1}c$ is the vertical asymptote and the critical points of the function $f$ are given by $x_1= aq^{-3}$ and $x_2=-q^{-3}b^{-1}(2q^3c+ab)$. In addition, observe that $a>0, b<0$ and $c>0.$ Thus, if $q$ is sufficiently large, then  a routine calculation shows that $ x_2\leq q^{-4}|E|^2\leq q^{-2}|E|\leq x_0\leq x_1,$ and  the local minimum and maximum happen at $x_2$ and $x_1$ respectively. Thus, our claim is justified . When we replace $\Omega(E)$  in (\ref{lowerbound}) by $ q^{-4}|E|^2$, we have
 \begin{equation}\label{goodlowerbound} |\Delta(E)|> \frac{q \left(g(E)\right)^2}{h(E)},\end{equation}
 where 
 $$ g(E)=|E|^2-2q^{-1}|E|^2+|E|$$
 and
 $$h(E)=-3q^{-1}|E|^4-\sqrt{3}|E|^{7/2}+4|E|^3 + |E|^4+\sqrt{3}q^{2}|E|^{5/2}-q^{2}|E|^2-q|E|^2.$$
 Recall that, without loss of generality, we have assumed that  the number of elements of $|E|$ is an integer $ q^{\alpha}$ where $\alpha \geq 4/3$ is the smallest real number such that $q^{\alpha}\geq q^{4/3}.$ Thus, we see that $h(E)\leq |E|^4+\sqrt{3}q^2|E|^{5/2}-\sqrt{3}|E|^{7/2} .$ Moreover, it is clear that $ g(E)\geq (1-2q^{-1})|E|^2.$ From (\ref{goodlowerbound}), it therefore follows that
 $$ |\Delta(E)|> \frac{q \left(1-2q^{-1}\right)^2}{K(|E|)},$$
 where $K(|E|)= 1 + \sqrt{3}q^2|E|^{-3/2}-\sqrt{3}|E|^{-1/2}.$
 If we consider the $K(|E|)$ as a function in terms of $|E|,$ then we can easily see that $K(|E|)\leq K(q^{4/3}),$ because $q^{4/3}\leq |E|\leq q^2$ and the function $K$ is decreasing on the interval. Thus, the proof of the second part of Theorem \ref{wolffin2d} is complete.

\section{Proof of Theorem \ref{distpinned} - Pinned distance sets}

We begin by defining the counting function,
$$\nu_y(t)=\sum_{\|x-y\|=t} E(x).$$
Squaring $\nu_y(t)$, we have
$$  \nu_y^2(t)=\sum_{\|x-y\|=\|x'-y\|=t} E(x)E(x').$$
Summing in $y\in E$ and $t\in {\mathbb F}_q$, we see
$$ \sum_{y\in E} \sum_{t\in {\mathbb F}_q} \nu_{y}^2(t) = \sum_{\|x-y\|=\|x'-y\|} E(y)E(x)E(x'),$$
applying orthogonality,
$$=q^{-1} \sum_{s\in {\mathbb F}_q} \sum_{y,x,x'\in {\mathbb F}_q^d} \chi(s(||x-y||-||x'-y||)) E(y)E(x)E(x'),$$
and extracting the $s=0$ term,

$$=q^{-1}{|E|}^3+q^{-1} \sum_{s\neq 0} \sum_{y,x,x'\in {\mathbb F}_q^d} \chi(s(||x-y||-||x'-y||)) E(y)E(x)E(x')=I+II.$$ 

Here
$$ II=q^{-1} \sum_{s \not=0} \sum_{y \in E} 
{\left| \sum_{x \in E} \chi(s(||x||-2y \cdot x)) \right|}^2,$$ since 
$$ \|x-y\|-\|x'-y\|=(||x||-2y \cdot x)-(||x'||-2y \cdot x').$$ 

It follows by extending the sum over $y \in E$ to over $y \in  {\Bbb F}_q^{d}$ that 
$$0\leq II \leq q^{-1}\sum_{s \not=0} \sum_{y \in {\Bbb F}_q^{d}} \sum_{x,x' \in E} 
\chi(-2s y \cdot (x-x')) \chi(s(||x||-||x'||)),$$
and from orthogonality in the variable $y\in {\mathbb F}_q^{d}$,
$$=q^{d-1} \sum_{s \not=0} \sum_{x\in E} 1,$$
which is less than the quantity $q^d|E|.$
It therefore follows that 
\begin{equation}\label{distancemodel}\sum_{y\in E} \sum_{t\in {\mathbb F}_q} \nu_{y}^2(t)=I + II < q^{-1}{|E|}^3 + q^d|E|.\end{equation}
Now, by the Cauchy-Schwarz inequality and above estimation, we obtain that 
$$ {|E|}^3={|E|}^{-1} \left( \sum_{y\in E}\sum_t \nu_{y}(t) \right)^2 < |E|^{-1} \sum_{y \in E} |\Delta_y(E)| \cdot (q^{-1}{|E|}^3+q^{d}{|E|}),$$ which means that 
$$ {|E|}^{-1} \sum_{y \in E}  |\Delta_y(E)|>\frac{|E|^3}{q^{-1}{|E|}^3+q^{d}{|E|}}\geq \frac{q}{2}$$
provided that $|E|\geq q^{(d+1)/2}$, which completes the proof of Theorem \ref{distpinned}.

\section{Proof of Theorem \ref{dotpinned} - Pinned dot product sets}
Here we define the function $\eta_y(s)$ by the relation 
$$ \sum_{s\in {\mathbb F}_q} g(s) \eta_y(s)=\sum_{x \in E} g(x \cdot y) E(x). $$

Taking $g(s)=q^{-1} \chi(-ts)$, we see that 
$$ \widehat{\eta}_y(t)=q^{d-1}\widehat{E}(ty).$$ 

It follows that 
$$\sum_{t\in {\mathbb F}_q} \sum_{y \in E} {|\widehat{\eta}_y(t)|}^2=q^{2(d-1)} \sum_{t\in {\mathbb F}_q} \sum_{y \in E} {|\widehat{E}(ty)|}^2,$$
and extracting $t=0$ we have that 
$$\sum_{t\in {\mathbb F}_q} \sum_{y \in E} {|\widehat{\eta}_y(t)|}^2={|E|}^3q^{-2}+q^{2(d-1)} \sum_{t \not=0} \sum_{y \in E} {|\widehat{E}(ty)|}^2,$$
which after changing variables
$$\sum_{t\in {\mathbb F}_q} \sum_{y \in E} {|\widehat{\eta}_y(t)|}^2= {|E|}^3q^{-2}+q^{2(d-1)} \sum_{x \in {\Bbb F}_q^d} {|\widehat{E}(x)|}^2 \cdot \sum_{t\neq 0} E(\frac{x}{t}).$$

Since $ \sum_{t\neq 0} E(\frac{x}{t})\leq (q-1),$ 
it follows by the Plancherel theorem that 
$$ \sum_{t\in {\mathbb F}_q} \sum_{y \in E} {|\widehat{\eta}_y(t)|}^2 \leq  {|E|}^3q^{-2}+q^{2(d-1)}(q-1)(|E|q^{-d})={|E|}^3q^{-2}+ q^{d-1}|E|-q^{d-2}|E|,$$ and applying the Plancherel theorem once again, we see that 
\begin{equation}\label{l2dot} q \sum_{t\in {\mathbb F}_q} \sum_{y \in E} {|\widehat{\eta}_y(t)|}^2= \sum_{s\in {\mathbb F}_q} \sum_{y \in E} \eta^2_y(s)
 \leq {|E|}^3q^{-1}+q^{d}{|E|}-q^{d-1}|E|.
 \end{equation}

The Cauchy-Schwarz inequality and this estimation implies that 
$$ {|E|}^3={|E|}^{-1} \left( \sum_{y\in E}\sum_{s\in {\mathbb F}_q} \eta_y(s) \right)^2 
<{|E|}^{-1} \sum_{y \in E} |\Pi_y(E)| \cdot ({|E|}^3q^{-1}+q^{d}{|E|}),$$
 which means that 
$$ {|E|}^{-1} \sum_{y \in E} |\Pi_y(E)| > \frac{q}{1+q^{d+1}{|E|}^{-2}} \geq \frac{q}{2},$$
provided that $|E|\geq q^{\frac{d+1}{2}},$ which completes the proof of Theorem \ref{dotpinned}.

\section{Proof of Theorem \ref{distpinnedproj} - Distance sets of cartesian products}
For a fixed $z\in {\mathbb F}_q$, we denote $\tilde{y}=(\pi(y),z)$ where $y\in {\mathbb F}_q^d.$
Given $\tilde{y} \in E_z,$ we define
$$\nu_{\tilde{y}}(t)=\sum_{\|x-\tilde{y}\|=t} E(x),$$
where $E_z$ was defined in Section \ref{cartprod}.
Squaring and summing in $\tilde{y}$ and $t$,
$$ \sum_{\tilde{y}\in E_z} \sum_{t\in {\mathbb F}_q} \nu_{\tilde{y}}^2(t) = \sum_{\|x-\tilde{y}\|=\|x'-\tilde{y}\|} E_z(\tilde{y})E(x)E(x'),$$
applying orthogonality,
$$=q^{-1} \sum_{s\in {\mathbb F}_q} \sum_{\tilde{y},x,x'\in {\mathbb F}_q^d} \chi(s(||x-\tilde{y}||-||x'-\tilde{y}||)) E_z(\tilde{y})E(x)E(x'),$$
and extracting the $s=0$ term,
$$=q^{-1}{|E_z|}{|E|}^2+q^{-1} \sum_{s\neq 0} \sum_{\tilde{y},x,x'\in {\mathbb F}_q^d} \chi(s(||x-\tilde{y}||-||x'-\tilde{y}||)) E_z(\tilde{y})E(x)E(x')=I+II.$$ 

Here
$$ II=q^{-1} \sum_{s \not=0} \sum_{\tilde{y} \in E_z} 
{\left| \sum_{x \in E} \chi(s(||x||-2\tilde{y} \cdot x)) \right|}^2,$$ since 
$$ \|x-\tilde{y}\|-\|x'-\tilde{y}\|=(||x||-2\tilde{y} \cdot x)-(||x'||-2\tilde{y} \cdot x').$$ 

It follows by extending the sum over $\tilde{y} \in E_z$ to over $\tilde{y} \in  {\Bbb F}_q^{d-1} \times \{z\}$ that 
$$0\leq II \leq q^{-1}\sum_{s \not=0} \sum_{\tilde{y} \in {\Bbb F}_q^{d-1} \times \{z\}} \sum_{x,x' \in E} 
\chi(-2s\tilde{y} \cdot (x-x')) \chi(s(||x||-||x'||)),$$
and from orthogonality in the variables $\pi(\tilde{y})\in {\mathbb F}_q^{d-1}$,
$$=q^{d-2} \sum_{s \not=0} \sum_{\pi(x)=\pi(x')} E(x)E(x') \chi(-2sz(x_d-x_d^\prime)) 
\chi(s(x_d^2-{x_d^\prime}^2)),$$
which may be rewritten

$$=q^{d-2} \sum_{s\in {\mathbb F}_q} \sum_{\pi(x)=\pi(x')} E(x)E(x') \chi(-2sz(x_d-x'_d)) 
\chi(s(x_d^2-{x^\prime_d}^2))-q^{d-2} \sum_{\pi(x)=\pi(x')} E(x)E(x').$$
Now since the second term is always negative,
$$< q^{d-2} \sum_{s\in {\mathbb F}_q} \sum_{\pi(x)=\pi(x')} E(x)E(x') \chi(-2sz(x_d-x'_d)) 
\chi(s(x_d^2-{x'_d}^2)).$$

Then we may apply orthogonality in $s$ to  show that this expression is equal to
$$q^{d-1}  \sum_{2z(x_d-x'_d)=x_d^2-{x'_d}^2; \pi(x)=\pi(x')} E(x)E(x'),$$
and dividing out,
$$=q^{d-1} \sum_{2z=x_d+x'_d; x_d \not=x'_d; \pi(x)=\pi(x')} E(x)E(x')+q^{d-1} \sum_{x=x'} E(x)E(x'),$$
which gives the final bound
$$II < 2q^{d-1}|E|.$$

Now, by the Cauchy-Schwarz inequality and above estimations, we obtain that 
$$ {|E|}^2|E_z|={|E_z|}^{-1} \left( \sum_{\tilde{y}\in E_z}\sum_t \nu_{\tilde{y}}(t) \right)^2 < |E_z|^{-1} \sum_{\tilde{y} \in E_z} |\Delta^{(z)}_y(E)| \cdot ({|E|}^2|E_z|q^{-1}+2q^{d-1}{|E|}),$$ which means that 
$$ {|E_z|}^{-1} \sum_{\tilde{y} \in E_z}  |\Delta^{(z)}_y(E)|> \frac{q}{1+2q^{d}{|E|}^{-1}|E_z|^{-1}}\geq \frac{q}{3},$$
provided that $|E||E_z|\geq q^d$, which completes the proof of Theorem \ref{distpinnedproj}.

\section{Proof of Theorem \ref{dotpinnedproj} - Dot product sets of cartesian products}
The proof here will follow same basic outline of the proof of Theorem \ref{distpinnedproj}.
Let $z\in {\mathbb F}_q^*.$ Given $\tilde{y}\in E_z$, we first define 
$$\nu_{\tilde{y}}(t)=\sum_{x\cdot \tilde{y}=t} E(x).$$
Then $\nu_{\tilde{y}}^2(t)=\sum_{x\cdot\tilde{y}=x\cdot\tilde{y}=t} E(x)E(x').$
If we sum in $\tilde{y}\in E_z \subset {\mathbb F}_q^d$ and $t\in {\mathbb F}_q,$ then we have
$$\sum_{\tilde{y} \in E_z} \sum_{t\in {\mathbb F}_q} \nu_{\tilde{y}}^2(t) = \sum_{x\cdot \tilde{y}=x'\cdot \tilde{y}} E_z(\tilde{y})E(x)E(x').$$
Then applying orthogonality in $s\in {\mathbb F}_q$ and extracting $s=0$, we obtain

$$\sum_{\tilde{y} \in E_z} \sum_{t\in {\mathbb F}_q} \nu_{\tilde{y}}^2(t) = |E_z||E|^2q^{-1}+
q^{-1}\sum_{s\neq 0}\sum_{\substack{\tilde{y} \in E_z \\ x,x' \in E}} \chi(s\tilde{y}\cdot(x-x'))
=I+II.$$

Now, for $z \in {\mathbb F}^*_q,$ we have
$$II = q^{-1}\sum_{s\neq 0}\sum_{\tilde{y} \in E_z}\left| \sum_{x\in E} \chi(sx\cdot \tilde{y})\right|^2,$$
and by extending the sum over $ E_z$ to over ${\mathbb F}_q^{d-1}$ we have that this quantity is
$$\leq q^{-1}\sum_{s\neq 0}
\sum_{\tilde{y} \in \mathbb F_q^{d-1}\times \{z\} }\left| \sum_{x\in E} \chi(sx\cdot \tilde{y})\right|^2.$$
Then applying orthogonality in the variable $\pi(\tilde{y})$ we have that
$$II \leq q^{d-2}\sum_{s\neq 0}
\sum_{\pi(x)=\pi(x')} \chi(sz(x_d-x_d'))E(x)E(x'),$$
and extracting the term $x_d=x_d'$ gives
$$=q^{d-2}\sum_{s\neq 0}
\sum_{\pi(x)=\pi(x'); x_d=x'_d} E(x)E(x')$$
$$+q^{d-2}\sum_{s\neq 0}\sum_{\pi(x)=\pi(x'); x_d\neq x'_d} \chi(sz(x_d-x_d'))E(x)E(x')$$
$$=q^{d-2}(q-1)|E|-q^{d-2}\sum_{\pi(x)=\pi(x'); x_d\neq x'_d}E(x)E(x')$$
$$< q^{d-1}|E|.$$
Thus it follows that for each $z \in {\mathbb F}_q^*,$
$$ \sum_{\tilde{y} \in E_z} \sum_{t\in {\mathbb F}_q} \nu_{\tilde{y}}^2(t) < |E_z||E|^2q^{-1}+ q^{d-1}|E|.$$
Now, by the Cauchy-Schwarz inequality and this estimation, we have 
$$ {|E|}^2|E_z|={|E_z|}^{-1} \left( \sum_{\tilde{y}\in E_z}\sum_{t\in {\mathbb F}_q} \nu_{\tilde{y}}(t) \right)^2 < |E_z|^{-1} \sum_{\tilde{y} \in E_z} |\Pi^{(z)}_y(E)| \cdot ({|E|}^2|E_z|q^{-1}+q^{d-1}{|E|}),$$ which means that 
$$ {|E_z|}^{-1} \sum_{\tilde{y} \in E_z}  |\Pi^{(z)}_y(E)|> \frac{q}{1+q^{d}{|E|}^{-1}{|E_z|}^{-1}}\geq \frac{q}{2},$$
provided that $|E||E_z|\geq q^d$. Thus the proof of Theorem \ref{dotpinnedproj} is complete.

\section{Proof of Theorem \ref{distmultproj} - $k$-star distance sets}

We begin by defining the counting function,
$$\nu_{y^1,\dots,y^{k}}(t_1,\dots,t_{k})=\mathop{\sum}_{\|x-y^1\|=t_1,\dots, \|x-y^{k}\|=t_{k}} E(x).$$
The proof of Theorem \ref{distmultproj}  is based on the following lemma.

\begin{lemma} \label{B} Let $E \subset {\Bbb F}_q^d$. Then
$$\sum_{y^1,\dots,y^k\in E}\sum_{t_1, t_2,\dots,t_k \in {\mathbb F}_q}|\nu_{y^1, y^2,\dots,y^k}(t_1, t_2,\dots,t_k)|^2 \lesssim \frac{|E|^{k+2}}{q^{k}}+q^d|E|^{k}.$$
\end{lemma}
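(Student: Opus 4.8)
The plan is to expand the square, apply Fourier inversion to each sphere indicator, and reduce to an orthogonality computation exactly as in the proofs of Theorems \ref{distpinned} and \ref{distpinnedproj}, but carried out in the $k$ pin variables $y^1,\dots,y^k$ simultaneously. First I would write
$$
|\nu_{y^1,\dots,y^k}(t_1,\dots,t_k)|^2=\sum_{\substack{\|x-y^j\|=t_j\\ \|x'-y^j\|=t_j,\ j=1,\dots,k}} E(x)E(x'),
$$
and then sum over all $t_1,\dots,t_k\in{\mathbb F}_q$ and over $y^1,\dots,y^k\in E$. Summing in each $t_j$ removes that constraint and forces $\|x-y^j\|=\|x'-y^j\|$ for every $j$, so
$$
\sum_{y^1,\dots,y^k\in E}\ \sum_{t_1,\dots,t_k\in{\mathbb F}_q}|\nu_{y^1,\dots,y^k}(t_1,\dots,t_k)|^2
=\sum_{\substack{x,x'\in E\\ y^1,\dots,y^k\in E}}\ \prod_{j=1}^{k}\mathbf 1\bigl(\|x-y^j\|=\|x'-y^j\|\bigr),
$$
where I would write $\mathbf 1(\cdot)$ as an indicator (or just "$[\,\cdot\,]$"), and note that the constraints decouple across $j$, so the whole expression factors as $\sum_{x,x'\in E}\bigl(N(x,x')\bigr)^{k}$ with $N(x,x')=\#\{y\in E:\|x-y\|=\|x'-y\|\}$.

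Next I would estimate $N(x,x')$. Using $\|x-y\|-\|x'-y\|=(\|x\|-\|x'\|)-2y\cdot(x-x')$ and orthogonality in an auxiliary variable $s\in{\mathbb F}_q$, extracting $s=0$ gives $N(x,x')=q^{-1}|E|+q^{-1}\sum_{s\neq0}\chi(s(\|x\|-\|x'\|))\sum_{y\in E}\chi(-2sy\cdot(x-x'))$. The main term is $q^{-1}|E|$; when $x\neq x'$ the error term is controlled by extending $y$ to all of ${\mathbb F}_q^d$ and using orthogonality, but since $N$ appears raised to the $k$-th power I cannot simply bound $|N-q^{-1}|E||$ and expand — the cleaner route is to split the $x,x'$ sum into the diagonal $x=x'$ (contributing $|E|\cdot|E|^{k}=|E|^{k+1}\lesssim q^d|E|^k$, absorbed into the second term) and the off-diagonal, and on the off-diagonal use the crude bound $N(x,x')\le |E|$ together with a genuine count. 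A more robust approach: bound $N(x,x')^k\le N(x,x')\cdot|E|^{k-1}$, so the sum is $\le |E|^{k-1}\sum_{x,x'\in E}N(x,x')=|E|^{k-1}\sum_{y\in E}\#\{(x,x')\in E\times E:\|x-y\|=\|x'-y\|\}=|E|^{k-1}\sum_{y\in E}\sum_{t}\nu_y(t)^2$, and the inner sum over $y$ and $t$ was already bounded in \eqref{distancemodel} by $q^{-1}|E|^3+q^d|E|$. Multiplying through gives exactly $|E|^{k-1}(q^{-1}|E|^3+q^d|E|)=q^{-k}\cdot q^{k-1}|E|^{k+2}+q^d|E|^k$; absorbing the harmless power of $q$ into the $\lesssim$ yields the claimed $\frac{|E|^{k+2}}{q^k}+q^d|E|^k$.

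The main obstacle is bookkeeping: making sure the reduction $N(x,x')^k\le|E|^{k-1}N(x,x')$ really does land on the already-proved estimate \eqref{distancemodel} with the correct powers of $q$, and checking that the diagonal and the $\|x-x'\|$-type degeneracies that appeared in the earlier proofs (the "$x_d+x_d'=2z$" phenomenon) do not cost an extra factor here — they do not, because extending the pin sum to all of ${\mathbb F}_q^d$ already discards those terms with a favorable sign, exactly as in the proof of Theorem \ref{distpinned}. So the real content is entirely contained in \eqref{distancemodel}; the new input is only the elementary factoring of the $k$-fold product and the interpolation inequality $N^k\le |E|^{k-1}N$.
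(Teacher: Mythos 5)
Your reduction of the left-hand side to $\sum_{x,x'\in E}N(x,x')^{k}$ with $N(x,x')=\#\{y\in E:\|x-y\|=\|x'-y\|\}$ is a correct identity, and the observation that $\sum_{x,x'\in E}N(x,x')=\sum_{y\in E}\sum_{t}\nu_y(t)^2$ correctly ties the case $k=1$ to the already-proved estimate. But the step that carries all the weight, the interpolation $N(x,x')^{k}\le |E|^{k-1}N(x,x')$, is too lossy: it yields the bound $|E|^{k-1}\bigl(q^{-1}|E|^{3}+q^{d}|E|\bigr)=q^{-1}|E|^{k+2}+q^{d}|E|^{k}$, and the first term exceeds the claimed $q^{-k}|E|^{k+2}$ by a multiplicative factor of $q^{k-1}$. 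That factor is not ``harmless'' and cannot be absorbed into $\lesssim$, which in this paper means ``up to an absolute constant,'' not ``up to powers of $q$.'' The loss is fatal for the application: feeding $q^{-1}|E|^{k+2}+q^{d}|E|^{k}$ into the Cauchy--Schwarz step of Theorem \ref{distmultproj} gives only $\frac{1}{|E|^{k}}\sum|\Delta_{y^1,\dots,y^k}(E)|\gtrsim q$ rather than $\gtrsim q^{k}$, since for $|E|\ge q^{(d+1)/2}$ the term $q^{-1}|E|^{k+2}$ dominates. Heuristically the issue is that $N(x,x')\approx q^{-1}|E|$ for typical pairs, so each of the $k$ factors of $N$ should contribute $q^{-1}|E|$, whereas your bound replaces $k-1$ of them by the trivial $|E|$.

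The paper avoids this by inducting on $k$: it peels off one pin variable at a time, applies orthogonality in an auxiliary $s$-variable to the constraint $\|x-y^{k}\|=\|x'-y^{k}\|$, and observes that the $s=0$ term equals $q^{-1}|E|$ times the $(k-1)$-pin quantity (so the main term gains a genuine factor of $q^{-1}|E|$ at each stage, producing the full $q^{-k}$), while the $s\ne 0$ remainder is bounded by $q^{d}|E|^{k}$ by extending the $y^{k}$-sum to all of ${\Bbb F}_q^{d}$ exactly as in your sketch. Your factored form could be salvaged along the same lines — e.g.\ by proving $\sum_{x,x'}N^{k}\le q^{-1}|E|\sum_{x,x'}N^{k-1}+q^{d}|E|^{k}$ and iterating — but that is precisely the paper's induction, and the shortcut $N^{k}\le|E|^{k-1}N$ does not substitute for it.
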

\begin {proof}
We proceed by induction.  The initial case follows from the estimation (\ref{distancemodel}).  Suppose that 
$$\sum_{y^1,\dots,y^{k-1}\in E} \sum_{t_1,\dots,t_{k-1}\in {\mathbb F}_q} \nu_{y^1,\dots,y^{k-1}}^2(t_1,\dots,t_{k-1})\lesssim \frac{|E|^{k+1}}{q^{k-1}}+  q^{d}|E|^{k-1}.$$

Now
 $$\sum_{y^1,\dots,y^{k-1},y^k \in E } \sum_{t_1,\dots,t_{k}\in {\mathbb F}_q} \nu_{y^1,\dots,y^{k-1}, y^k}^2(t_1,\dots,t_{k})= $$
$$\mathop{\sum\dots \sum}_{\substack{\|x-y^1\|=\|x'-y^1\|,\dots,\|x-y^{k-1}\|=\|x'-y^{k-1}\| \\ \|x- y^{k}\|=\|x'- y^{k}\|}} E(y^1)\dots E(y^{k-1})  
E( y^{k}) E(x)E(x').$$

Then applying orthogonality,
$$= q^{-1} \sum_{s \in \mathbb F_q}\mathop{\sum\dots \sum}_{\substack{\|x-y^1\|=\|x'-y^1\|,\dots,\|x-y^{k-1}\|=\|x'-y^{k-1}\| \\ x,x',y^1,\dots, y^{k-1}, y^k \in E }} \chi(s(||x||-2 y^{k} \cdot x))\chi(-s(||x'||-2 y^{k} \cdot x')).$$
since 
$$ \|x- y^{k}\|-\|x'- y^{k}\|=(||x||-2 y^{k} \cdot x)-(||x'||-2 y^{k} \cdot x').$$ 

Extracting the $s=0$ term and applying the induction hypothesis gives
$$\lesssim \frac{|E|^{k+2}}{q^{k}}+  q^{d-1}|E|^{k}+R,$$
where 
$$R= q^{-1} \sum_{s \in \mathbb F^*_q}\mathop{\sum\dots \sum}_{\substack{\|x-y^1\|=\|x'-y^1\|,\dots,\|x-y^{k-1}\|=\|x'-y^{k-1}\| \\ x,x',y^1,\dots, y^{k-1},y^k\in E }} \chi(s(||x||-2 y^{k} \cdot x))\chi(-s(||x'||-2 y^{k} \cdot x')).$$

Then $R$ may be expressed as
$$ q^{-1} \sum_{s \in \mathbb F_q^*} \sum_{t_1,\dots,t_{k-1}\in {\mathbb F}_q} \sum_{\substack{y^1,\dots y^{k-1} \in E \\  y^{k}\in E}} 
{\left|\mathop{\sum\dots \sum}_{\substack{\|x-y^1\|=t_1,\dots,\|x-y^{k-1}\|=t_{k-1} \\ x\in E }} \chi(s(||x||-2 y^{k} \cdot x)) \right|}^2.$$
Then extending sum over $ y^{k} \in E$ to over $ y ^{k}\in  {\Bbb F}_q^{d}$, expanding the square, and applying orthogonality in $y^{k}$ gives
$$R\leq q^{d-1} \sum_{s \in \mathbb F_q^*} \sum_{y^1,\dots y^{k-1}, x \in E} 1$$

which in turn is less than $ q^d |E|^k.$

Therefore we have
$$\sum_{y^1,\dots,y^{k}\in E} \sum_{t_1,\dots,t_{k}\in {\mathbb F}_q} \nu_{y^1,\dots,y^{k}}^2(t_1,\dots,t_{k})\lesssim \frac{|E|^{k+2}}{q^{k}}+  q^{d}|E|^{k},$$
which completes the proof of Lemma \ref{B}.
\end{proof}

We are ready to complete the proof of Theorem \ref{distmultproj}.
By the Cauchy-Schwarz inequality, we have

$$|E|^{2k+2}=\left( \sum_{y^1,\dots,y^{k}\in E}\sum_{t_1, t_2,\dots,t_{k}
\in {\Bbb F}_q}
\nu_{y^1, y^2,\dots,y^{k}}(t_1, t_2,\dots,t_{k})\right)^2$$

$$\leq \sum_{y^1,\dots,y^{k}\in E}|\Delta_{y^1, y^2,\dots,y^{k}}(E)| \cdot
\sum_{y^1,\dots,y^{k}\in E}\sum_{t_1, t_2,\dots,t_{k} \in {\Bbb
F}_q}|\nu_{y^1, y^2,\dots,y^{k}}(t_1, t_2,\dots,t_{k})|^2.$$
By Lemma \ref{B} it follows that
$$|E|^{2k+2} \lesssim \sum_{y^1,\dots,y^{k}\in E}|\Delta_{y^1,y^2,\dots,y^{k}}(E)| \cdot \left( \frac{|E|^{k+2}}{q^{k}}+q^d|E|^{k}\right).$$
Therefore,
$$\sum_{y^1,\dots,y^{k}\in E}|\Delta_{y^1, y^2,\dots,y^{k}}(E)|\gtrsim \frac{|E|^{2k+2}}{\frac{|E|^{k+2}}{q^{k}}+q^d|E|^{k}}.$$
Normalize to obtain
$$\frac{1}{|E|^{k}}\sum_{y^1,\dots,y^{k}\in E}|\Delta_{y^1, y^2,\dots,y^{k}}(E)| \gtrsim \frac{|E|^{k+2}}{\frac{|E|^{k+2}}{q^{k}}+q^d|E|^{k}}, $$
which for $|E| \gtrsim q^{\frac{d+k}{2}}$ gives 
$$\frac{1}{|E|^{k}}\sum_{y^1,\dots,y^{k}\in E}|\Delta_{y^1, y^2,\dots,y^{k}}(E)| \gtrsim  q^{k} .$$
Thus the proof of Theorem \ref{distmultproj}  is complete.

\section{Proof of Theorem \ref{kpoint} - $k$-simplices}
If $k=1$, then the statement of Theorem \ref{kpoint} immediately follows from Theorem \ref{distpinned}. We therefore assume that $k\geq 2.$
As stated in the introduction in order to specify a $k$-simplex up to isometry it is enough to specify the distances determined by the points.
Here we will specify our $k$-simplices using Theorem \ref{distmultproj} as one set of distances at a time.
In addition, we need the following theorem which is more general version of Theorem \ref{distmultproj}.
\begin{theorem}\label{Alexthm} Given $E\subset {\mathbb F}_q^d,$ let ${\mathcal E} \subset E\times \cdots \times E= E^s ,s\geq 2,$ with $|{\mathcal E}|\sim |E|^s.$
Define 
$${\mathcal E'}=\{(y^1,\dots,y^{s-1})\in E^{s-1}: (y^1,\dots,y^{s-1},y^s)\in {\mathcal E} ~~\text{for some}~~y^s\in E\}.$$
In addition, for each $(y^1,\dots,y^{s-1})\in {\mathcal E'}$ we define 
$$ {\mathcal E}(y^1,\dots,y^{s-1})=\{y^s\in E: (y^1,\dots,y^{s-1},y^s)\in {\mathcal E}\}.$$
If $|E|\gtrsim q^{\frac{d+s-1}{2}}$, then we have
\begin{equation}\label{Alexindex}\frac{1}{|{\mathcal E}'|} \sum_{(y^1,\dots,y^{s-1})\in {\mathcal E}'} \left| \Delta_{y^1,\dots,y^{s-1}}\left({\mathcal E}(y^1,\dots,y^{s-1})\right)\right|\gtrsim q^{s-1},\end{equation}
where 
$$\Delta_{y^1,\dots,y^{s-1}}\left({\mathcal E}(y^1,\dots,y^{s-1})\right)=\{ \left(\|y^s-y^1\|,\dots, \|y^s-y^{s-1}\|\right)\in ({\mathbb F}_q)^{s-1}: y^s\in {\mathcal E}(y^1,\dots,y^{s-1})\}.$$
\end{theorem}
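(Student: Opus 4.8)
The plan is to re-run the Cauchy--Schwarz argument from the proof of Theorem \ref{distmultproj}, but indexed by the subset $\mathcal{E}$ instead of by all of $E^s$, and to observe that restricting to $\mathcal{E}$ only \emph{decreases} the relevant $L^2$-quantity, so that Lemma \ref{B} (applied with $k=s-1$) still controls it. The whole point of the statement is that it is stable under passing to a large subset, which is exactly what is needed to use it as an inductive step when assembling a $k$-simplex one block of distances at a time.

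First I would introduce the restricted counting function
$$\nu^{\mathcal{E}}_{y^1,\dots,y^{s-1}}(t_1,\dots,t_{s-1})=\sum_{\|y^s-y^1\|=t_1,\dots,\|y^s-y^{s-1}\|=t_{s-1}}\mathcal{E}(y^1,\dots,y^{s-1},y^s),$$
where $\mathcal{E}(\cdot)$ denotes the indicator function of $\mathcal{E}\subset E^s$. Two elementary facts drive the argument. First, for each fixed $(y^1,\dots,y^{s-1})\in\mathcal{E}'$ the number of tuples $(t_1,\dots,t_{s-1})$ with $\nu^{\mathcal{E}}_{y^1,\dots,y^{s-1}}(t_1,\dots,t_{s-1})>0$ is precisely $|\Delta_{y^1,\dots,y^{s-1}}(\mathcal{E}(y^1,\dots,y^{s-1}))|$, and summing $\nu^{\mathcal{E}}$ over all $(y^1,\dots,y^{s-1})$ and all $(t_1,\dots,t_{s-1})$ recovers $|\mathcal{E}|$. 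Second, pointwise $\nu^{\mathcal{E}}_{y^1,\dots,y^{s-1}}\le\nu_{y^1,\dots,y^{s-1}}$, the full counting function of Lemma \ref{B}, since $\mathcal{E}(y^1,\dots,y^{s-1})\subset E$; moreover $\nu^{\mathcal{E}}$ vanishes identically whenever $(y^1,\dots,y^{s-1})\notin\mathcal{E}'$.

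Now Cauchy--Schwarz in the combined variable $(y^1,\dots,y^{s-1},t_1,\dots,t_{s-1})$, together with the first fact, gives
$$|\mathcal{E}|^2\le\Big(\sum_{(y^1,\dots,y^{s-1})\in\mathcal{E}'}\big|\Delta_{y^1,\dots,y^{s-1}}(\mathcal{E}(y^1,\dots,y^{s-1}))\big|\Big)\cdot\Big(\sum_{y^1,\dots,y^{s-1}\in E}\ \sum_{t_1,\dots,t_{s-1}\in{\mathbb F}_q}\big|\nu^{\mathcal{E}}_{y^1,\dots,y^{s-1}}(t_1,\dots,t_{s-1})\big|^2\Big),$$
where the second sum has been harmlessly enlarged from $\mathcal{E}'$ to all of $E^{s-1}$ using the second fact. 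By pointwise domination the second factor is at most $\sum_{y^1,\dots,y^{s-1}\in E}\sum_{t_1,\dots,t_{s-1}}|\nu_{y^1,\dots,y^{s-1}}(t_1,\dots,t_{s-1})|^2\lesssim|E|^{s+1}q^{-(s-1)}+q^d|E|^{s-1}$ by Lemma \ref{B} with $k=s-1$. Using $|\mathcal{E}|\sim|E|^s$ and $|\mathcal{E}'|\le|E|^{s-1}$, the inequality rearranges to
$$\frac{1}{|\mathcal{E}'|}\sum_{(y^1,\dots,y^{s-1})\in\mathcal{E}'}\big|\Delta_{y^1,\dots,y^{s-1}}(\mathcal{E}(y^1,\dots,y^{s-1}))\big|\gtrsim\frac{|E|^2}{|E|^2q^{-(s-1)}+q^d},$$
and splitting the denominator shows the right-hand side is $\gtrsim q^{s-1}$ provided $q^d\lesssim|E|^2q^{-(s-1)}$, i.e. $|E|\gtrsim q^{\frac{d+s-1}{2}}$, which is the hypothesis. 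This proves (\ref{Alexindex}).

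Since the argument reduces entirely to Lemma \ref{B}, there is no genuine obstacle here; the only points requiring care are bookkeeping ones: that enlarging the $L^2$-sum from $\mathcal{E}'$ to $E^{s-1}$ costs nothing (it adds only nonnegative terms, and $\nu^{\mathcal{E}}$ is supported on $\mathcal{E}'$ anyway), and that the constants hidden in $|\mathcal{E}|\sim|E|^s$ and in the $\lesssim$ of Lemma \ref{B} are harmless, which they are since they never involve $q$.
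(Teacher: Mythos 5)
Your proposal is correct and follows essentially the same route as the paper: the same restricted counting function, the same Cauchy--Schwarz over the support in the combined $(y,t)$ variable, pointwise domination by the unrestricted counting function, and Lemma \ref{B} with $k=s-1$. The only cosmetic difference is that you use the trivial upper bound $|\mathcal{E}'|\le|E|^{s-1}$ where the paper notes $|\mathcal{E}'|\sim|E|^{s-1}$; for the stated average the upper bound is all that is needed, so nothing is lost.
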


\begin{proof} For each $t_1,\dots,t_s \in {\mathbb F}_q, $ the incidence function on $\Delta_{y^1,\dots,y^{s-1}}({\mathcal E}(y^1,\dots,y^{s-1}))$ is given by 
$$ \nu_{y^1,\dots,y^{s-1}}^{{\mathcal E}(y^1,\dots,y^{s-1})}(t_1,\dots,t_{s-1})= |\{y^s \in {\mathcal E}(y^1,\dots, y^{s-1}): \|y^s-y^1\|=t_1, \dots, \|y^s-y^{s-1}\|=t_{s-1}\}|.$$
Observe that
$$ \nu_{y^1,\dots,y^{s-1}}^{{\mathcal E}(y^1,\dots,y^{s-1})}(t_1,\dots, t_{s-1}) \leq \nu_{y^1,\dots,y^{s-1}}(t_1,\dots, t_{s-1})=|\{(y^s \in E: ||y^s-y^1||=t_1,\dots,
||y^s-y^{s-1}||=t_{s-1}\}|.$$

By the Cauchy-Schwarz inequality, we have

$$\left|{\mathcal E}\right|^{2}=\left( \sum_{(y^1,\dots,y^{s-1})\in {\mathcal E'}} \sum_{t_1\dots,t_{s-1}\in {\mathbb F}_q}
\nu_{y^1,\dots,y^{s-1}}^{{\mathcal E}(y^1,\dots,y^{s-1})}(t_1,\dots, t_{s-1}) \right)^2$$\small

$$\leq \left(\sum_{(y^1,\dots,y^{s-1})\in {\mathcal E'}}|\Delta_{y^1,\dots,y^{s-1}}({\mathcal E}(y^1,\dots,y^{s-1}))|\right) \cdot \left( \sum_{y^1,\dots,y^{s-1}\in E}\sum_{t_1,\dots,t_{s-1} \in {\Bbb F}_q}|\nu_{ y^1,\dots,y^{s-1}}(t_1,\dots,t_{s-1})|^2\right).$$\small

Using Lemma \ref{B}, we therefore have
$$\left|{\mathcal E}\right|^{2}\leq \sum_{(y^1,\dots,y^{s-1})\in {\mathcal E'}}|\Delta_{y^1,\dots,y^{s-1}}\left({\mathcal E}(y^1,\dots,y^{s-1})\right)|  \cdot \left( \frac{|E|^{s+1}}{q^{s-1}}+q^d|E|^{s-1}\right).$$\small
Observe that $|{\mathcal E}'|\sim |E|^{s-1}$ because otherwise $|{\mathcal E}|\leq |{\mathcal E}'||E|<< |E|^s$ which contradicts $|{\mathcal E}|\sim |E|^s.$
Therefore, if $|E|\gtrsim q^{(d+s-1)/2}$, then 
it follows that 
$$\frac{1}{|{\mathcal E'}|} \sum_{(y^1,\dots,y^{s-1})\in {\mathcal E'}} \left| \Delta_{y^1,\dots,y^{s-1}}\left({\mathcal E}(y^1,\dots,y^{s-1})\right)\right|\gtrsim q^{s-1}.$$
Thus the proof of Theorem \ref{Alexthm} is complete.
\end{proof}
When a pigeon-holing argument is applied to the inequality (\ref{Alexindex}) in Theorem \ref{Alexthm}, the following corollary immediately follows.
\begin{corollary}\label{winner}
Let $E\subset {\mathbb F}_q^d$ and ${\mathcal E}\subset E\times\cdots\times E= E^s, s\geq 2,$ with $|{\mathcal E}|\sim |E|^s.$
If $|E|\gtrsim q^{\frac{d+s-1}{2}}$, then there exists $ {\mathcal E}^{(1)} \subset {\mathcal E}'\subset E^{s-1}$ with $|{\mathcal E}^{(1)}|\sim |{\mathcal E}'|\sim |E|^{s-1}$
such that for every $(y^1,\dots,y^{s-1})\in {\mathcal E}^{(1)},$ 
$$ \left|\Delta_{y^1,\dots,y^{s-1}}({\mathcal E}(y^1,\dots,y^{s-1}))\right|\gtrsim q^{s-1}.$$
Namely, the elements in $ {\mathcal E}$ determines a positive proportion of all $(s-1)$-simplices whose bases are fixed as a $(s-2)$-simplex given by any  element $(y^1,\dots,y^{s-1})\in {\mathcal E}^{(1)}.$
\end{corollary}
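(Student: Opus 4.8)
The plan is to deduce Corollary~\ref{winner} from the averaged estimate \eqref{Alexindex} of Theorem~\ref{Alexthm} by a routine pigeonhole argument, using the trivial observation that each set $\Delta_{y^1,\dots,y^{s-1}}\left({\mathcal E}(y^1,\dots,y^{s-1})\right)$ is contained in ${\mathbb F}_q^{s-1}$ and therefore has cardinality at most $q^{s-1}$. In other words, the average of these cardinalities over $(y^1,\dots,y^{s-1})\in{\mathcal E}'$ is $\gtrsim q^{s-1}$, which is comparable to the \emph{largest} value any single term can attain, and this forces a positive proportion of the terms to be of the maximal order of magnitude.

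Concretely, I would first fix the implied constant: by Theorem~\ref{Alexthm}, since $|E|\gtrsim q^{(d+s-1)/2}$, there is an absolute $c>0$ (independent of $q$, $E$, and ${\mathcal E}$) with
$$\sum_{(y^1,\dots,y^{s-1})\in {\mathcal E}'}\bigl|\Delta_{y^1,\dots,y^{s-1}}\left({\mathcal E}(y^1,\dots,y^{s-1})\right)\bigr|\;\ge\; c\,q^{s-1}\,|{\mathcal E}'|.$$
I would then set
$${\mathcal E}^{(1)}=\Bigl\{(y^1,\dots,y^{s-1})\in{\mathcal E}':\bigl|\Delta_{y^1,\dots,y^{s-1}}\left({\mathcal E}(y^1,\dots,y^{s-1})\right)\bigr|\ge \tfrac{c}{2}\,q^{s-1}\Bigr\}$$
and split the sum over ${\mathcal E}'$ into the contributions of ${\mathcal E}^{(1)}$ and its complement. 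Terms off ${\mathcal E}^{(1)}$ contribute at most $\tfrac{c}{2}q^{s-1}|{\mathcal E}'|$, while terms on ${\mathcal E}^{(1)}$ contribute at most $|{\mathcal E}^{(1)}|\,q^{s-1}$; comparing with the displayed lower bound gives $|{\mathcal E}^{(1)}|\ge \tfrac{c}{2}|{\mathcal E}'|$, so ${\mathcal E}^{(1)}$ is a positive proportion of ${\mathcal E}'$, and of course ${\mathcal E}^{(1)}\subset{\mathcal E}'$.

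Finally I would record $|{\mathcal E}'|\sim|E|^{s-1}$, which is already established inside the proof of Theorem~\ref{Alexthm} (if $|{\mathcal E}'|\ll|E|^{s-1}$ then $|{\mathcal E}|\le|{\mathcal E}'|\,|E|\ll|E|^s$, contradicting $|{\mathcal E}|\sim|E|^s$, and the reverse bound is trivial since ${\mathcal E}'\subset E^{s-1}$). Together with $\tfrac{c}{2}|{\mathcal E}'|\le|{\mathcal E}^{(1)}|\le|{\mathcal E}'|$ this yields $|{\mathcal E}^{(1)}|\sim|{\mathcal E}'|\sim|E|^{s-1}$, and by construction every $(y^1,\dots,y^{s-1})\in{\mathcal E}^{(1)}$ satisfies $\bigl|\Delta_{y^1,\dots,y^{s-1}}\left({\mathcal E}(y^1,\dots,y^{s-1})\right)\bigr|\gtrsim q^{s-1}$, which is the assertion; the closing sentence about $(s-1)$-simplices with a fixed $(s-2)$-simplex base is merely the geometric reinterpretation of a tuple of distances via Lemma~\ref{uptocrap}. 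There is no real obstacle here — the one point deserving a word of care is that the implied constant in \eqref{Alexindex} is absolute, so that the threshold $\tfrac{c}{2}q^{s-1}$ in the definition of ${\mathcal E}^{(1)}$ does not degrade as $q\to\infty$; everything else is the standard ``average comparable to the maximal possible value forces a positive-density good set'' argument.
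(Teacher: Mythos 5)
Your argument is correct and is exactly the pigeonhole step the paper has in mind (the paper simply asserts that the corollary ``immediately follows'' from \eqref{Alexindex} by pigeonholing): the average over ${\mathcal E}'$ is $\gtrsim q^{s-1}$ while each term is trivially at most $q^{s-1}$, so a positive proportion of terms must be $\geq \tfrac{c}{2}q^{s-1}$, and $|{\mathcal E}'|\sim|E|^{s-1}$ is recorded inside the proof of Theorem~\ref{Alexthm} just as you note. No discrepancies with the paper's approach.
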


We are now ready to prove Theorem \ref{kpoint}.
First, using a pigeon-holing argument together with Theorem \ref{distmultproj}, we see that for $|E| \gtrsim q^{\frac{d+k}{2}},$ there exists a set ${\mathcal E} \subset E\times \dots \times E=E^k$ with $|{\mathcal E}|\gtrsim |E|^k$ such that for every $(y^1,\dots,y^k)\in {\mathcal E},$ we have
$$|\Delta_{y^1,\dots,y^k}(E)|=|\{(\|y^0-y^j\|)_{1\leq j\leq k} \in (\mathbb F_q)^{k}:y^0\in E\}|\gtrsim q^{k}.$$
Notice that this  implies that if $|E|\gtrsim q^{\frac{d+k}{2}}$, then the set $E$ determines a positive proportion of all $k$-simplices whose bases are given by any fixed $(k-1)-$simplex determined by $(y^1,\dots,y^k)\in {\mathcal E}.$ It therefore suffices to show that a positive proportion of all $(k-1)$-simplices can be constructed by the elements of ${\mathcal E}.$ Since $|{ E}| \gtrsim q^{\frac{d+k}{2}}\gtrsim q^{\frac{d+k-1}{2}}$ and $|{\mathcal E}|\sim |E|^k,$ we can apply Corollary \ref{winner} where  $s$ is replaced by $k.$ Then we see that there exists  a set ${\mathcal E}^{(1)}\subset {\mathcal E}' $ with $|{\mathcal E}^{(1)}|\sim |{\mathcal E}'|\sim |E|^{k-1}$
such that for every $(y^1,\dots,y^{k-1})\in {\mathcal E}^{(1)},$ we have
$$\left|\Delta_{y^1,\dots,y^{k-1}}( {\mathcal E}(y^1,\dots,y^{k-1}))\right|\gtrsim q^{k-1}.$$
Observe that this estimation implies that the elements in ${\mathcal E}$ determines a positive proportion of all possible $(k-1)$-simplices where their bases are fixed by a $(k-2)-$simplex given by any $ (y^1,\dots,y^{k-1})\in {\mathcal E}^{(1)}.$ Thus, it is enough to show that the elements in ${\mathcal E}^{(1)}$ can determine a positive proportion of all $(k-2)-$simplices. Putting ${\mathcal E}^{(0)}={\mathcal E}$ and using Corollary \ref{winner}, if we repeat above process $p$-times, then we see that there exists a set $ {\mathcal E}^{(p)}\subset \left({\mathcal E}^{(p-1)}\right)'\subset E^{k-p}$ with $|{\mathcal E}^{(p)}|\sim | \left({\mathcal E}^{(p-1)}\right)'|\sim |E|^{k-p}$ such that 
for each $(y^1,\dots,y^{k-p})\in {\mathcal E}^{(p)},$ we have 
$$ \left| \Delta_{y^1,\dots,y^{k-p}}( {\mathcal E}^{(p-1)}(y^1,\dots,y^{k-p}))\right|\gtrsim q^{k-p},$$
and so it suffices to show that the elements in ${\mathcal E}^{(p)}\subset E^{k-p}$ determine a positive proportion of all $(k-p-1)-$simplices.
Taking $p=k-2$, we reduce our problem to showing that the elements in $ {\mathcal E}^{(k-2)}\subset E\times E$ determine a positive proportion of all $1-$simplices. However, it is clear by applying Corollary \ref{winner} after setting $s=2, {\mathcal E}={\mathcal E}^{(k-2)}.$ To see this, first notice from our repeated process that ${\mathcal E}^{(k-2)}\subset E\times E$ and $|{\mathcal E}^{(k-2)}|\sim |E|^2.$ Since $|E|\gtrsim q^{\frac{d+k}{2}}\gtrsim q^{\frac{d+1}{2}},$ Corollary \ref{winner} yields the desirable result. Therefore, we complete the proof of Theorem \ref{kpoint}.

\section{Proof of Theorem \ref{dotmultproj} - $k$-star dot product sets }
Define
$\eta_{y^1, y^2,\dots,y^{k}}(s_1, s_2,\dots,s_k)$ by the relation
$$\sum_{s_1, s_2,\dots,s_k \in {\Bbb F}_q}g(s_1,
s_2,\dots,s_k)\eta_{y^1, y^2,\dots,y^k}
(s_1, s_2,\dots,s_k)=\sum_{x \in {\Bbb F}_q^d}g(x \cdot y^1,x \cdot
y^2,\dots,x \cdot y^k)E(x),$$
where $g$ is a complex-valued function on ${\mathbb F}_q^{k},$ and $y^j\in {\mathbb F}_q^d$ for $j=1,2,\dots ,k.$
The proof of Theorem \ref{dotmultproj}  is based on the following lemma.
\begin{lemma} \label{A} Let $E \subset {\Bbb F}_q^d$. Then
$$\sum_{y^1,\dots,y^k\in E}\sum_{s_1, s_2,\dots,s_k \in {\Bbb
F}_q}|\eta_{y^1, y^2,\dots,y^k}(s_1, s_2,\dots,s_k)|^2
\lesssim \frac{|E|^{k+2}}{q^{k}}+q^d|E|^{k}.$$
\end{lemma}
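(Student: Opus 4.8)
The plan is to prove Lemma \ref{A} by induction on $k$, following the same scheme as the proof of Lemma \ref{B}; in fact the argument is a little cleaner here, since the dot product is bilinear and there is no need for an analogue of the identity $\|x-y\|=\|x\|-2\,x\cdot y+\|y\|$ that was used to isolate the pin in the distance case. For the base case $k=1$ one observes that $\eta_y(s)=|\{x\in E:\ x\cdot y=s\}|$, so the claim
$$\sum_{y\in E}\sum_{s\in{\mathbb F}_q}\eta_y^2(s)\lesssim \frac{|E|^3}{q}+q^d|E|$$
is exactly inequality (\ref{l2dot}) established in the course of proving Theorem \ref{dotpinned}.

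For the inductive step, I would first expand the square and unravel the definition of $\eta$, obtaining
$$\sum_{y^1,\dots,y^k\in E}\ \sum_{s_1,\dots,s_k\in{\mathbb F}_q}|\eta_{y^1,\dots,y^k}(s_1,\dots,s_k)|^2=\mathop{\sum\cdots\sum}_{\substack{x\cdot y^j=x'\cdot y^j,\ 1\le j\le k\\ x,x',y^1,\dots,y^k\in E}}E(x)E(x')\,\prod_{j=1}^k E(y^j).$$
I would keep the first $k-1$ equations $x\cdot y^j=x'\cdot y^j$ as genuine constraints and resolve the $k$-th one by orthogonality in a new variable $s\in{\mathbb F}_q$, writing that constraint as $q^{-1}\sum_{s}\chi(s(x-x')\cdot y^k)$. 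Extracting the $s=0$ term leaves the sum over $y^k\in E$ completely free, contributing a factor $|E|$, while the remaining sum is precisely the left-hand side of the $(k-1)$-fold version of the lemma; the inductive hypothesis then bounds the $s=0$ contribution by
$$q^{-1}|E|\left(\frac{|E|^{k+1}}{q^{k-1}}+q^d|E|^{k-1}\right)=\frac{|E|^{k+2}}{q^k}+q^{d-1}|E|^k.$$

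It remains to estimate the $s\ne0$ remainder, which can be organized as
$$R=q^{-1}\sum_{s\ne0}\ \sum_{t_1,\dots,t_{k-1}\in{\mathbb F}_q}\ \sum_{y^1,\dots,y^{k-1}\in E}\ \sum_{y^k\in E}\ \left|\mathop{\sum\cdots\sum}_{\substack{x\cdot y^j=t_j,\ 1\le j\le k-1\\ x\in E}}\chi(s\,x\cdot y^k)\right|^2.$$
Here I would extend the sum over $y^k\in E$ to all of ${\mathbb F}_q^d$, expand the square, and apply orthogonality in $y^k$; because $s\ne0$ this forces $x=x'$, produces a factor $q^d$, and collapses the inner sum to a count. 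Summing out the now-unconstrained $t_1,\dots,t_{k-1}$, then $y^1,\dots,y^{k-1}$ and $x$ over $E$, and finally $s$ over ${\mathbb F}_q^*$, gives $R\le q^{d-1}(q-1)|E|^k<q^d|E|^k$. Adding the two contributions yields
$$\sum_{y^1,\dots,y^k\in E}\sum_{s_1,\dots,s_k\in{\mathbb F}_q}|\eta_{y^1,\dots,y^k}(s_1,\dots,s_k)|^2\lesssim\frac{|E|^{k+2}}{q^k}+q^{d-1}|E|^k+q^d|E|^k\lesssim\frac{|E|^{k+2}}{q^k}+q^d|E|^k,$$
which closes the induction. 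There is no real obstacle in this argument; the only point needing care is the bookkeeping of which variables become free — once after extracting the $s=0$ term (this is where one gains the factor $|E|$ over $q$), and once after the orthogonality step forces $x=x'$ — exactly as in the proof of Lemma \ref{B}.
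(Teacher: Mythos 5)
Your proof is correct. The base case is indeed \eqref{l2dot}, the expansion
$$\sum_{s_1,\dots,s_k}|\eta_{y^1,\dots,y^k}(s_1,\dots,s_k)|^2=|\{(x,x')\in E\times E:\ x\cdot y^j=x'\cdot y^j,\ 1\le j\le k\}|$$
is what the defining relation for $\eta$ gives, and your treatment of the $s\neq 0$ remainder (extend $y^k$ to ${\mathbb F}_q^d$, expand the square, orthogonality forces $x=x'$, then sum out the $t_j$) is airtight and yields $R\le q^{d-1}(q-1)|E|^k$. However, your inductive step runs entirely in physical space, transplanting the proof of Lemma \ref{B} with $x\cdot y^k-x'\cdot y^k=(x-x')\cdot y^k$ in place of the polarization identity, whereas the paper works on the Fourier side: it uses $\widehat{\eta}_{y^1,\dots,y^k}(t_1,\dots,t_k)=q^{d-k}\widehat{E}(t_1y^1+\dots+t_ky^k)$, splits on $t_k=0$ versus $t_k\neq 0$, and bounds the $t_k\neq 0$ piece by a change of variables that produces the intersection count $|E\cap H_{y^1,\dots,y^{k-1},m}|$ with $H$ a subspace of dimension at most $k$, followed by Plancherel. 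The two error-term bounds are numerically the same, and your route is arguably cleaner and more uniform with the distance case; what the paper's Fourier-side formulation buys is precisely the refinement exploited later for Theorem \ref{distmultprojsphere}: when $E\subset S$ one has $|E\cap H_{y^1,\dots,y^{k-1},m}|\lesssim q^{k-1}$ rather than $q^k$, which improves the error term to $q^{d-1}|E|^k$ and hence the exponent to $\frac{d+k-1}{2}$. That improvement is not visible in your orthogonality argument, so if you wanted the sphere result you would have to switch to (or recover) the paper's decomposition at that point.
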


\begin {proof}
We proceed by induction.  The initial case follows from equation (\ref{l2dot}).  Suppose that 
$$\sum_{y^1,\dots,y^{k-1}\in E}\sum_{s_1, s_2,\dots,s_{k-1} \in {\mathbb
F}_q}|\eta_{y^1, y^2,\dots,y^{k-1}}(s_1, s_2,\dots,s_{k-1})|^2
\lesssim \frac{|E|^{k+1}}{q^{k-1}}+q^d|E|^{k-1}.$$

Let $g(s_1, s_2,\dots,s_k)=q^{-k} \chi(-s_1t_1-s_2t_2-\dots-s_kt_k)$. 
 It follows that 
$$\widehat{\eta}_{y^1, y^2,\dots,y^k}(t_1,t_2,\dots,t_k)=q^{d-k}\widehat{E}(t_1y^1+t_2y^2+\dots+t_ky^k).$$
Then substituting in,
$$\sum_{t_1,\dots,t_k \in {\Bbb F}_q} \sum_{y^1,\dots,y^k \in E}|\widehat{\eta}_{y^1, y^2,\dots,y^k}(t_1, t_2,\dots,t_k)|^2$$
$$= q^{2(d-k)}\sum_{t_1,\dots,t_k \in {\Bbb F}_q}\sum_{y^1,\dots,y^k \in E}|\widehat{E}(t_1y^1+t_2y^2+\dots+t_ky^k)|^2,$$
and extracting the case when $t_k=0$ we have
$$q^{2(d-k)}|E|\sum_{t_1,\dots,t_{k-1}\in {\mathbb F}_q} \sum_{y^1,\dots,y^{k-1} \in E}|\widehat{E}(t_1y^1+t_2y^2+\dots+t_{k-1}y^{k-1})|^2 $$
$$+ q^{2(d-k)}\sum_{\substack{t_1,\dots,t_{k-1}\in \mathbb F_q \\t_k\neq 0}} \sum_{y^1,\dots,y^k \in E}|\widehat{E}(t_1y^1+t_2y^2+\dots+t_ky^k)|^2=I+II$$

For the first term we apply Plancherel and the induction hypothesis to get
$$I\lesssim \frac{|E|^{k+2}}{q^{2k}}+q^{d-k-1}|E|^k.$$

For the second term we write,
\begin{align*}
II &= q^{2(d-k)}\sum_{\substack{t_1,\dots,t_{k-1}\in \mathbb F_q \\t_k\neq 0}}\sum_{y^1,\dots,y^k \in E}|\widehat{E}(t_1y^1+t_2y^2+\dots+t_ky^k)|^2 \\
&= q^{2(d-k)} \sum_{y^1,\dots,y^{k-1} \in E}\sum_{\substack{t_1,\dots,t_{k-1}\in \mathbb F_q \\t_k\neq 0}} \left( \sum_{y^k \in {\mathbb F}_q^d } E(y^k)|\widehat{E}(t_1y^1+t_2y^2+\dots+t_ky^k)|^2\right), \\
\end{align*}
and changing variables gives
$$\lesssim q^{2(d-k)} \sum_{y^1,\dots,y^{k-1} \in E}\sum_{\substack{t_1,\dots,t_{k-1}\in \mathbb F_q \\t_k\neq 0}} \sum_{m\in {\mathbb F}_q^d}|\widehat{E}(m)|^2E(t_1y^1+\dots t_{k-1}y^{k-1}+mt_k^{-1}), $$
which summing in $t_1,\dots,t_{k}$ gives
\begin{equation}\label{hyper}
=q^{2(d-k)} \sum_{y^1,\dots,y^{k-1} \in E}\sum_{m\in {\mathbb F}_q^d}|\widehat{E}(m)|^2|E\cap H_{y^1,\dots,y^{k-1},m}|, 
\end{equation}
where  $H_{y^1,\dots,y^{k-1},m}$ is $k$ dimensional hyperplane running through the origin.
Since $|E\cap H_{y^1,\dots,y^{k-1}}|\leq q^{k-1}$,
$$\lesssim q^{2(d-k)}|E|^{k-1}q^{k}\sum_{m \in {\Bbb F}_q^d}|\widehat{E}(m)|^2= q^{d-k}|E|^{k}. $$

Therefore we have that

$$\sum_{t_1,\dots,t_k \in {\Bbb F}_q} \sum_{y^1,\dots,y^k \in
E}|\widehat{\eta}_{y^1, y^2,\dots,y^k} 
(t_1, t_2,\dots,t_k)|^2 \lesssim \frac{|E|^{k+2}}{q^{2k}}+q^{d-k}|E|^{k}.$$

Applying Plancherel in $t_1, \dots, t_k$ we obtain

$$\sum_{y^1,\dots,y^k\in E}\sum_{s_1, s_2,\dots,s_k \in {\Bbb
F}_q}|\eta_{y^1, y^2,\dots,y^k}(s_1, s_2,\dots,s_k)|^2
\lesssim \frac{|E|^{k+2}}{q^{k}}+q^d|E|^{k}.$$

\end {proof}

We are ready to complete the proof of Theorem \ref{dotmultproj} .
By the Cauchy-Schwarz inequality, we have

$$|E|^{2(k+1)}=\left( \sum_{y^1,\dots,y^k\in E}\sum_{s_1, s_2,\dots,s_k
\in {\Bbb F}_q}
\eta_{y^1, y^2,\dots,y^k}(s_1, s_2,\dots,s_k)\right)^2$$

$$\lesssim \sum_{y^1,\dots,y^k\in E}|\Pi_{y^1, y^2,\dots,y^k}(E)| \cdot
\sum_{y^1,\dots,y^k\in E}\sum_{s_1, s_2,\dots,s_k \in {\Bbb
F}_q}|\eta_{y^1, y^2,\dots,y^k}(s_1, s_2,\dots,s_k)|^2.$$
By Lemma \ref{A} it follows that
$$|E|^{2k+2} \lesssim \sum_{y^1,\dots,y^k\in E}|\Pi_{y^1,
y^2,\dots,y^k}(E)| \cdot
\left( \frac{|E|^{k+2}}{q^{k}}+q^d|E|^{k}\right).$$
Therefore,
$$\sum_{y^1,\dots,y^k\in E}|\Pi_{y^1, y^2,\dots,y^k}(E)|
\gtrsim \frac{|E|^{2k+2}}{\frac{|E|^{k+2}}{q^{k}}+q^d|E|^{k}}.$$
Normalize to obtain
$$\frac{1}{|E|^{k}}\sum_{y^1,\dots,y^k\in E}|\Pi_{y^1,
y^2,\dots,y^k}(E)| \gtrsim \frac{|E|^{k+2}}{\frac{|E|^{k+2}}{q^{k}}+q^d|E|^{k}}, $$
which for $|E| \gtrsim q^{\frac{d+k}{2}}$ gives 
$$\frac{1}{|E|^{k}}\sum_{y^1,\dots,y^k\in E}|\Pi_{y^1, y^2,\dots,y^k}(E)| \gtrsim  q^{k} .$$
Thus the proof of Theorem \ref{dotmultproj}  is complete.

\section{Proof of Theorem \ref{distmultprojsphere} - $k$-star distance sets on a sphere }
Here we only need to prove the following lemma whose proof we will briefly sketch.
\begin{lemma}  Let $E \subset S$. Then
$$\sum_{y^1,\dots,y^k\in E}\sum_{s_1, s_2,\dots,s_k \in {\Bbb
F}_q}|\nu_{y^1, y^2,\dots,y^k}(s_1, s_2,\dots,s_k)|^2
\lesssim \frac{|E|^{k+2}}{q^{k}}+q^{d-1}|E|^{k}.$$
\end{lemma}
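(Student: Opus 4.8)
The plan is to reduce to the dot‑product estimate of Lemma \ref{A} and then rerun its inductive proof, using the sphere hypothesis at exactly one place. Since $E\subset S$ and $y^1,\dots,y^k\in E\subset S$, for every $x\in E$ and every $j$ we have $\|x-y^j\|=\|x\|-2x\cdot y^j+\|y^j\|=2-2x\cdot y^j$. As $\operatorname{char}{\mathbb F}_q\neq 2$, the map $t\mapsto 1-t/2$ is a bijection of ${\mathbb F}_q$, so $\nu_{y^1,\dots,y^k}(t_1,\dots,t_k)=\eta_{y^1,\dots,y^k}(1-t_1/2,\dots,1-t_k/2)$, where $\eta_{y^1,\dots,y^k}$ is the $k$‑star dot‑product counting function from the proof of Theorem \ref{dotmultproj}. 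Hence $\sum_{t_1,\dots,t_k}|\nu_{y^1,\dots,y^k}(t_1,\dots,t_k)|^2=\sum_{s_1,\dots,s_k}|\eta_{y^1,\dots,y^k}(s_1,\dots,s_k)|^2$, and it suffices to prove the asserted bound with $\nu$ replaced by $\eta$.

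I would then run the induction in the proof of Lemma \ref{A} with $E\subset S$, changing only two inputs. First, the base case $k=1$: in the proof of Theorem \ref{dotpinned} the factor $q-1$ entered through the trivial bound $\sum_{t\neq 0}E(x/t)\le q-1$; when $E\subset S$ instead one has $\sum_{t\neq 0}E(x/t)\le|\{t\in{\mathbb F}_q^*:t^2=\|x\|\}|\le 2$, which immediately yields $\sum_{y\in E}\sum_{s}\eta_y^2(s)\lesssim |E|^3q^{-1}+q^{d-1}|E|$, i.e.\ the $k=1$ case. Second, and this is the crux, in the inductive step the term $II$ of \eqref{hyper},
\[
II=q^{2(d-k)}\sum_{y^1,\dots,y^{k-1}\in E}\ \sum_{m\in{\mathbb F}_q^d}|\widehat E(m)|^2\,|E\cap H_{y^1,\dots,y^{k-1},m}|,
\]
was bounded via $|E\cap H_{y^1,\dots,y^{k-1},m}|\le q^{k}$. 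But $H_{y^1,\dots,y^{k-1},m}$ is contained in the linear subspace $V=\operatorname{span}\{m,y^1,\dots,y^{k-1}\}$, which passes through the origin and has dimension at most $k$, so since $E\subset S$ we get $|E\cap H_{y^1,\dots,y^{k-1},m}|\le|S\cap V|$.

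The main new ingredient is then the uniform estimate $|S\cap V|\le 2q^{\dim V-1}\le 2q^{k-1}$ for every linear subspace $V$: writing $S\cap V=\{x\in V:Q_V(x)=1\}$ with $Q_V$ the restriction to $V$ of the quadratic form $Q(x)=\|x\|$, $Q_V$ is a homogeneous quadratic in $\le k$ variables with $Q_V(0)=0\neq 1$, hence not identically $1$, so its level set $\{Q_V=1\}$ is a proper subvariety of $V$ and has at most $2q^{\dim V-1}$ points by the standard point count for quadratic forms over finite fields (all ranks, including rank $0$ where the set is empty). It is essential here that $V$ passes through the origin — $S$ does contain isotropic affine lines, so the analogous bound fails for affine subspaces, but only linear subspaces occur. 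Substituting, $II\lesssim q^{2(d-k)}|E|^{k-1}q^{k-1}\sum_m|\widehat E(m)|^2=q^{2(d-k)}|E|^{k-1}q^{k-1}q^{-d}|E|=q^{d-k-1}|E|^k$, a factor $q$ better than in Lemma \ref{A}; combined with $I\lesssim|E|^{k+2}q^{-2k}+q^{d-k-2}|E|^k$ (Plancherel and the induction hypothesis, exactly as before) this gives $\sum_t|\widehat\eta_{y^1,\dots,y^k}(t)|^2\lesssim|E|^{k+2}q^{-2k}+q^{d-k-1}|E|^k$, and multiplying by $q^k$ via Plancherel as in \eqref{l2dot} closes the induction, proving the lemma. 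The only step requiring any thought beyond transcribing the proof of Lemma \ref{A} is the subspace–sphere incidence bound $|S\cap V|\lesssim q^{\dim V-1}$, which I expect to be the main (though minor) obstacle.
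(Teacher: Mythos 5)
Your proposal is correct and follows essentially the same route as the paper: convert distances to dot products via $\|x-y\|=2-2x\cdot y$ on the sphere, rerun the induction of Lemma \ref{A}, and improve the bound on $|E\cap H_{y^1,\dots,y^{k-1},m}|$ in \eqref{hyper} from $q^{k}$ to $O(q^{k-1})$ using $E\subset S$ and the fact that $H$ lies in a linear subspace through the origin. You supply two details the paper's sketch leaves implicit — the point count $|S\cap V|\le 2q^{\dim V-1}$ for linear subspaces and the sharpened base case $\sum_{t\neq 0}E(x/t)\le 2$ — and both are handled correctly.
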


Since $E$ is a subset of a sphere counting distances is equivalent to dot products. Therefore we return to the proof of Lemma \ref{A}. 
Recall the equation (\ref{hyper}) is specifically given by
$q^{2(d-k)} \sum_{y^1,\dots,y^{k-1} \in E}\sum_{m}|\widehat{E}(m)|^2|E\cap H_{y^1,\dots,y^{k-1},m}|.$
Since $E$ is a subset of a sphere, we see that $|E\cap H_{y^1,\dots,y^{k-1},m}|\lesssim q^{k-1}$.  The rest of the proof is similar to the proof of Theorem \ref{dotmultproj}.

\section{Proof of Lemma \ref{sphere}: Gauss sums and the sphere}

Let $\chi$ be a canonical additive character of ${\mathbb F}_q$ and $\psi$ a quadratic character of 
${\mathbb F}_q.$ Recall that $\psi(0)=0, \psi(t)=1$ if $t$ is a square in ${\mathbb F}_q,$ and $\psi(t)=-1 $ if $t$ is not a square number in ${\mathbb F}_q.$
For each $a\in {\mathbb{F}_q}$, the Gauss sum $G_a(\psi, \chi)$ is defined by 
$$ G_a(\psi, \chi) = \sum_{s\in {\mathbb F}_q^*} \psi(s) \chi(as).$$ The magnitude of the Gauss sum is given by the relation
$$ |G_a(\psi, \chi)| = \left\{\begin{array}{ll} q^{\frac{1}{2}} \quad &\mbox{if} \quad a\ne 0\\
                                                  0 \quad & \mbox{if} \quad a=0. \end{array}\right.$$

We appeal to the following expression (see Theorem $6.26$ and Theorem $6.27$ in \cite{LN97}):
$$ |S_t|=\left\{\begin{array}{ll} q^{d-1}+ q^{(d-1)/2} \psi\left( (-1)^{\frac{d-1}{2}}t\right)
\quad &\mbox{if} ~ d ~\mbox{is odd}\\
 q^{d-1}+ \mu(t) q^{\frac{d-2}{2}} \psi\left( (-1)^{\frac{d}{2}} \right)
 \quad &\mbox{if} ~ d ~\mbox{is even},\end{array}\right.$$
 where $\mu(t)=q-1$ if $t=0$, and $\mu(t)=-1$ if $t\in {\mathbb F}_q^*.$
 
We also need the following estimate of the Fourier transform of spheres (see \cite{IKo08}): for each $m \not=(0,\ldots,0)$, we have 
\begin{align*}\widehat{S}_t(m)=&q^{-d-1}\psi^d(-1) (G_1(\psi,\chi))^d\sum_{s \not=0} \chi \left( \frac{||m||}{4s}+st \right) \psi^d(s).\end{align*} 

Using the explicit formula for $|S_t|$ and observing that $\sum_{t\in {\mathbb F}_q} \mu(t)=0=\sum_{t\in {\mathbb F}_q} \psi(t),~ 
 \sum_{t\in {\mathbb F}_q} \mu^2(t)=(q-1)^2+(q-1)$, 
 and $\sum_{t\in {\mathbb F}_q} \psi^2\left( (-1)^{\frac{d-1}{2}}t\right)=(q-1),$ we can easily see that
$$ \sum_{t\in {\mathbb F}_q} |S_t|^2= q^{2d-1} +q^{d}-q^{d-1}, $$ which proves the first formula in Lemma \ref{sphere}.

 For $m\neq (0, \cdots, 0)$, apply  orthogonality in $t$, and then we have
 $$\sum_{t\in {\mathbb F}_q} |\widehat{S_t}(m)|^2=q^{-d-2} \sum_{t\in {\mathbb F}_q} \sum_{s,s'\neq 0}\chi \left(\frac{\|m\|}{4} 
 (\frac{1}{s}-\frac{1}{s'})\right) \chi(t(s-s'))\psi^d(s s'^{-1})= q^{-d}-q^{-d-1},$$ which completes the proof of the second formula in \ref{sphere}.
 
Finally, again from orthogonality in $t$, we see  
$$\sum_{t\in {\mathbb F}_q} |S_t| \widehat{S}_t(m)
=\left\{ \begin{array}{ll} q^{\frac{-d-3}{2}} \psi\left( (-1)^{\frac{d+1}{2}}\right)
(G_1(\psi,\chi))^d \sum\limits_{s\neq 0} \psi(s) \chi\left(\frac{\|m\|}{4s}\right)
\sum\limits_{t\in \mathbb{F}_q} \psi(t) \chi(st) ~&\mbox{for}~ d ~\mbox{odd}\\
q^{\frac{-d-4}{2}} \psi\left( (-1)^{\frac{d}{2}}\right)(G_1(\psi,\chi))^d
\sum\limits_{s\neq 0}\chi\left(\frac{\|m\|}{4s}\right)\sum\limits_{t\in {\mathbb F}_q}\mu(t)\chi(st),
~&\mbox{for}~ d ~\mbox{even} \end{array}\right.$$
where we used that for each $s\neq 0$,  $\psi^d(s)=\psi(s)$ for $d$ odd, and  $\psi^d(s)=1$ for $d$ even.
Since $\sum_{t\in \mathbb{F}_q} \psi(t) \chi(st)= \psi(s^{-1}) G_1(\psi,\chi)$ and 
$\sum_{t\in {\mathbb F}_q}\mu(t)\chi(st)=q$ for each $s\neq 0$, using the estimation of Gauss sums, we conclude that
$$  \sum_t |S_t| \widehat{S}_t(m) \leq 1-q^{-1},$$
where we also used that $ \sum\limits_{r\neq 0} \chi\left(\frac{\|m\|}{4r}\right) \leq (q-1).$ 
Thus the proof of Lemma \ref{sphere} is complete.




\enddocument